\documentclass[11pt,reqno]{amsart}
\usepackage[T1]{fontenc}
\usepackage[utf8]{inputenc}
\usepackage{lmodern}
\usepackage{amsmath,amsfonts,amssymb,mathtools,booktabs,mathrsfs}
\usepackage[a4paper,margin=28mm]{geometry}
\usepackage{microtype}
\usepackage{graphicx,algorithm,algorithmic,placeins,needspace}
\usepackage[hidelinks,pdfencoding=auto]{hyperref}
\numberwithin{equation}{section}
\numberwithin{figure}{section}
\numberwithin{table}{section}
\numberwithin{algorithm}{section}
\newtheorem{theorem}{Theorem}[section]
\newtheorem{lemma}[theorem]{Lemma}
\newtheorem{proposition}[theorem]{Proposition}
\newtheorem{corollary}[theorem]{Corollary}
\theoremstyle{definition}
\newtheorem{definition}[theorem]{Definition}
\theoremstyle{remark}
\newtheorem{remark}[theorem]{Remark}

\newcommand{\R}{\mathbb R}

\newcommand{\E}{\mathbb E}
\renewcommand{\P}{\mathbb P}
\newcommand{\F}{\mathcal F}

\newcommand{\dd}{\,\mathrm d}
\newcommand{\as}{\text{a.s.}}
\newcommand{\norm}[1]{\lVert#1\rVert}

\newcommand{\ind}[1]{\mathbf 1_{\{#1\}}}
\DeclareMathOperator{\sgn}{sgn}

\DeclareMathOperator{\Var}{Var}

\newcommand{\PP}{\mathcal P}

\allowdisplaybreaks[1]
\title[Stochastic Stabilization with an Unknown Input Gain]{Critical Growth for Stochastic Adaptive Stabilization with an Unknown Input Gain}
\author{Zhaobo Liu}
\address{Institute for Advanced Study, Shenzhen University, Shenzhen, Guangdong 518060, China}
\email{liuzhaobo@szu.edu.cn}
\subjclass[2020]{Primary 93E35; Secondary 93D21, 93E15, 93C55}
\keywords{Adaptive stabilization, feedback capability, stochastic nonlinear systems, unknown input gain, polynomial criterion, supervisory control}
\thanks{This work was supported in part by the National Natural Science Foundation of China under Grant 12401585, the Guangdong Basic and Applied Basic Research Foundation under Grant 2024A1515011542, and the General Program of the Shenzhen Natural Science Foundation under Grant JCYJ20250604181037012.}
\hypersetup{pdftitle={Critical Growth for Stochastic Adaptive Stabilization with an Unknown Input Gain},pdfauthor={Zhaobo Liu},pdfsubject={Stochastic nonlinear adaptive control; research manuscript}}
\date{}
\begin{document}
\begin{abstract}
This paper studies adaptive stabilization of a class of scalar
discrete-time nonlinear systems with unknown drift coefficients and an
unknown nonzero input gain. Under iid Gaussian noise, we establish a
necessary and sufficient polynomial criterion for global stabilizability,
characterizing the admissible combinations of growth exponents.
When the criterion holds, we construct a feedback combining finite
calibration with adaptive control. The time average of the squared output
converges almost surely to the noise variance, attaining the smallest
possible value.
If the criterion fails, then for every Borel causal feedback there exist
fixed parameter values and an initial state for which the output magnitude
diverges with positive probability.
\end{abstract}
\maketitle

\section{Introduction}\label{sec:intro}
Designing stabilizing feedback with incomplete model information is a
longstanding problem in control theory~\cite{AnnaswamyFradkov2021}. Adaptive control addresses this
problem by combining parameter estimation with feedback, so that the
control law is updated using observations collected during
operation~\cite{ChenGuo1991}.
For discrete-time linear stochastic systems, classical results establish
stability and asymptotically optimal
tracking under suitable assumptions~\cite{GoodwinRamadgeCaines1981,Guo1995}.
Developing a corresponding theory for nonlinear systems has also
received considerable attention~\cite{BercuPortier2002,LiLam2013}.

When the nonlinearities grow faster than linearly, adaptive control of
discrete-time systems faces not only difficulties in controller design
but also limitations of feedback itself~\cite{Guo2002}.
A precise example is the scalar system
\begin{equation}\label{eq:known_scalar}
 y_{t+1}=\theta y_t^b+u_t+w_{t+1},
\end{equation}
where $y_t$ is the output, $u_t$ is the input, $w_{t+1}$ is the noise,
$\theta$ is an unknown coefficient, and $b>0$ is a known exponent.
Under Gaussian assumptions on the
parameter and noise, Guo~\cite{Guo1997} proved that~\eqref{eq:known_scalar}
is globally stabilizable if and only if $b<4$.
Li and Xie~\cite{LiXie2006} established the same critical exponent
for stabilization robust to bounded noise.

With several unknown drift coefficients, the corresponding model is
\begin{equation}\label{eq:known_multi}
 y_{t+1}=\sum_{i=1}^m\theta_i y_t^{b_i}+u_t+w_{t+1},
\end{equation}
where $m$ is a positive integer, the coefficients $\theta_i$ are unknown,
and the exponents $b_1>1$, $b_1>\cdots>b_m>0$ are known.
Xie and Guo~\cite{XieGuo1999} obtained a polynomial obstruction to
stabilization under Gaussian assumptions, leaving open whether strict
positivity of the polynomial also suffices. This sufficiency question
was resolved under bounded parameter and noise assumptions by Li, Xie,
and Guo~\cite{LiXieGuo2006}, who established a necessary and sufficient
polynomial criterion. Li and Lam~\cite{LiLam2013} completed the Gaussian
case using least-squares (LS) feedback for sufficiency. Under the respective
assumptions, these results characterize stabilizability by
\begin{equation}\label{eq:known_criterion}
 P(x)=x^{m+1}-(x-1)\sum_{i=1}^m b_i x^{m-i}>0,
 \qquad x\in[1,b_1].
\end{equation}
Further results address more general regression functions and certain
systems with faster growth~\cite{LiuLi2019,LiuLi2023}, as well as optimal adaptive output
regulation~\cite{LiuLi2025}.
In the LS analyses of~\cite{LiLam2013,LiuLi2023}, the known
input contribution can be subtracted from the observations, leaving a
regression vector formed from known functions of the output. Estimates
of the associated information matrices and output conditional variances
are central to the stabilization proofs.

These results assume that the input gain is known. For superlinear
systems, uncertainty in
the gain can also change which growth rates permit stabilization.
Joint estimation of the drift coefficients and gain introduces the
feedback input into the regression vector, so the estimates based solely
on output regression functions cannot be applied directly.
For a single unknown drift coefficient, Li and Guo~\cite{LiGuo2010}
considered replacing $u_t$ in~\eqref{eq:known_scalar} by $g u_t$,
where $g\ne0$ is also unknown. Under bounded noise, they showed that
this additional uncertainty reduces the critical exponent from $4$ to $3$.
With several unknown drift coefficients and an unknown input gain, it
remains to determine which joint restrictions on the growth exponents
characterize stabilizability.

We consider the scalar system
\begin{equation}\label{eq:plant}
 y_{t+1}=\sum_{i=1}^m\theta_i y_t^{b_i}+g u_t+w_{t+1},
 \qquad t\ge0,
\end{equation}
where $m$ and the known exponents are as in~\eqref{eq:known_multi}.
The drift coefficients $\theta_i\in\R$ and the input gain $g\ne0$
are fixed and unknown. The noise is iid Gaussian with mean zero and
unknown positive variance.
We establish a necessary and sufficient polynomial criterion for
global stabilizability of~\eqref{eq:plant}. Unlike the single
polynomial condition for a known input gain, this criterion requires
joint positivity of a family of polynomials determined by the growth
exponents. When the criterion holds, the constructed feedback makes the
time average of the squared output converge almost surely to the noise
variance, attaining the smallest possible value.

The main difficulty is that the unknown input gain introduces subsequent
outputs into the interpolation determinants used for parameter calibration,
where terms associated with
different growth exponents can cancel. For sufficiency, we use the
polynomial positivity conditions to choose output ranges in which each
interpolation determinant has a single dominant term. The resulting error
bounds determine calibration inputs that attain the successive output
ranges and the required parameter accuracy. Each calibration is followed
by adaptive feedback, with a performance test triggering a new calibration
upon failure. We prove that only finitely many calibrations occur almost
surely.
For necessity, conditional Gaussian estimates bound output growth under
arbitrary causal feedback. A polynomial identity combines these bounds
to show that failure of the criterion yields output divergence with
positive probability for some fixed parameter values and initial state.

The paper is organized as follows. Section~\ref{sec:results} presents the
main results. Section~\ref{sec:supervisor} introduces the supervisory
controller. Sections~\ref{sec:calibration} and~\ref{sec:local} analyze the
calibration procedure and the adaptive feedback, respectively.
Section~\ref{sec:necessity} proves necessity, and Section~7 concludes the
paper. Supporting proofs are collected in the appendices.

\paragraph{Notation}
The symbol $\|\cdot\|$ denotes the Euclidean norm of a vector and
the induced norm of a matrix, and $\|\cdot\|_p$ denotes the vector
$\ell^p$ norm. For symmetric matrices, $A\succeq B$ means that
$A-B$ is positive semidefinite, and $A\succ B$ means that it is
positive definite. For $A\succ0$, write $\|x\|_A^2=x^\top Ax$.
We write $I_d$ for the $d\times d$ identity matrix, omitting the
subscript when its dimension is clear, and $a\wedge b=\min\{a,b\}$.
The symbols $\ind{B}$ and
$\sgn(x)$ denote the indicator of an event $B$ and the sign of $x$.
In bounds stated almost surely, the constant implicit in $O(\cdot)$
may depend on the sample path, fixed parameters, and initial state.
\section{Main results}\label{sec:results}
For system~\eqref{eq:plant}, let $b=(b_1,\ldots,b_m)$ denote the known
exponent vector, with $b_1>1$ and $b_1>\cdots>b_m>0$. Write
$\theta=(\theta_1,\ldots,\theta_m)^\top$,
$\vartheta=(\theta^\top,g)^\top$, and
$f(y)=(y^{b_1},\ldots,y^{b_m})^\top$.
Assume that each power function $y\mapsto y^{b_i}$ is real-valued on $\R$.
The noise variables $w_t$ are independent and identically distributed with distribution
$N(0,\sigma^2)$, where $\sigma>0$ is unknown.
The drift coefficients and the nonzero input gain are fixed and unknown.
A feedback law has the form $u_t=\pi_t(y_0,\ldots,y_t)$, where each
$\pi_t:\R^{t+1}\to\R$ is Borel measurable.
The observation filtration is $\F_t=\sigma(y_0,\ldots,y_t)$.

We use the following standard notion of stabilizability; see
\cite[Definition~2.1]{LiLam2013}.
\begin{definition}\label{def:stabilizability}
System~\eqref{eq:plant}
is \emph{globally stabilizable} if there exists a feedback such that
\[
 \limsup_{T\to\infty}\frac1T\sum_{t=0}^{T-1}y_{t+1}^2<\infty\quad\as
\]
for every fixed $\theta\in\R^m$, $g\ne0$, $\sigma>0$, and deterministic
initial state $y_0\in\R$.
\end{definition}

For $j=1,\ldots,m$, define
\begin{equation}\label{eq:poly}
 \PP_j(x)=x^{m+2}-(x-1)\left[
 \sum_{i=1}^j b_i x^{m-i+1}+x^{m-j+1}
 +\sum_{i=j+1}^m b_i x^{m-i}\right].
\end{equation}
Since $\PP_j(1)=1$ for every $j$, these polynomials are all strictly
positive on $[1,b_1]$ if and only if none has a zero in that interval.

\begin{theorem}\label{thm:main}
System~\eqref{eq:plant} is globally stabilizable if and only if
\begin{equation}\label{eq:criterion}
 \PP_j(x)>0,\qquad x\in[1,b_1],\quad j=1,\ldots,m.
\end{equation}
\end{theorem}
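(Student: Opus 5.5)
The proof has two halves, sufficiency and necessity, following the outline given in the introduction.

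\emph{Sufficiency.} Assuming \eqref{eq:criterion}, I would build a supervisory controller that alternates finite calibration phases with adaptive certainty-equivalence feedback. The plan has three steps.

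\textbf{Step 1: calibration.} Starting from the current output, apply $m+1$ deterministic-scaled inputs that drive the output through a prescribed chain of magnitudes $|y|\approx r^{x_1},\dots,r^{x_{m+1}}$. Then solve the interpolation system $y_{k+1}-w_{k+1}=\vartheta^\top(f(y_k)^\top,u_k)^\top$ for an estimate of $\vartheta=(\theta^\top,g)^\top$.

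\textbf{Step 2: dominance in the determinant.} This is the main obstacle. The determinant of the $(m+1)\times(m+1)$ regressor matrix involves the inputs $u_k$, and hence the subsequent outputs. Its Leibniz expansion contains terms indexed by which column pairs with $g$. I would choose the exponent ratios $x_k$ along a geometric-type chain: the positivity of $\PP_j$ at the relevant point $x\in[1,b_1]$ means exactly that, for each $j$, the term in which the input column sits at position $j$ cannot be matched by competing terms. Writing the exponents of competing products as linear forms in $1,x,\dots,x^{m+1}$ and comparing them reduces to the sign of $\PP_j(x)$. The determinant then has a single dominant term, and Cramer's rule gives parameter errors of order noise$/$(dominant growth).

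\textbf{Step 3: adaptive phase and performance test.} With the estimate accurate enough that $|\hat g-g|\ll|g|$ and the drift is cancelled to within $O(1)$ in the relevant range, switch to least-squares adaptive feedback $u_t=-\hat g_t^{-1}\hat\theta_t^\top f(y_t)$. Run a performance test (e.g.\ $\sum_{s\le t}y_{s+1}^2\le c\,t$ plus a growth check). On failure, recalibrate with larger $r$.

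To finish sufficiency, I would use Borel--Cantelli with Gaussian tails: since each calibration succeeds with probability tending to one as $r$ grows, only finitely many calibrations occur a.s. After the last one, the standard LS analysis (information-matrix bounds as in \cite{LiLam2013}, now with an input column that is asymptotically collinear only up to the estimation error) gives $\frac1T\sum y_{t+1}^2\to\sigma^2$.

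\emph{Necessity.} Suppose some $\PP_j$ has a zero $x^*\in[1,b_1]$. I would place a Gaussian prior on $\vartheta$, independent of the noise, and show that under any Borel causal feedback the output diverges with positive probability under the Bayesian model. Fubini then yields fixed parameters and an initial state with divergence on an event of positive probability. The key estimate is a lower bound on the conditional variance. Because $\vartheta$ enters linearly, $y_{t+1}$ given $\F_t$ is Gaussian with variance $\sigma^2+\phi_t^\top P_t\phi_t$, where $\phi_t=(f(y_t)^\top,u_t)^\top$ and $P_t$ is the posterior covariance. Following \cite{XieGuo1999,LiGuo2010}, I would lower-bound $\phi_t^\top P_t\phi_t$ by a ratio of determinants of $\sum\phi\phi^\top$. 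Each new output then satisfies $\log|y_{t+1}|\gtrsim$ a linear combination of $\log|y_{t-k}|$ with coefficients read off from $b_i$ and the input column placed at position $j$. On a positive-probability event, these inequalities force $\log|y_t|$ to grow like $(x^*)^t$. The identity $\PP_j(x^*)\le0$ is exactly the characteristic-polynomial condition making this recursion self-sustaining, and the resulting divergence contradicts stabilizability.
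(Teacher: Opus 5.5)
Both halves of your proposal skip the step that carries the proof. In the sufficiency half, the role you assign to the criterion is also wrong. A unique dominant term in the interpolation determinant has nothing to do with \eqref{eq:criterion}. It is generic, and the paper obtains it by perturbing the schedule (condition~\eqref{eq:scaledistinct}). Already for $m=1$, on a chain with magnitudes $e^{Ac/x},e^{Ac},e^{Axc}$, the two terms of $\det\mathsf M_t$ have exponents $Ac(x+b_1/x)$ and $Ac(b_1+1)$. Their order switches at $x=b_1$, not at a zero of $\PP_1(x)=x\{x^2-(b_1+1)x+b_1+1\}$.

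What the criterion actually controls is the residual drift left after cancelling with the latest estimate. By~\eqref{eq:currenterr} this residual has size $e^{A[V(c;t)-V(c;t-1)]}$, and it must be beaten by the next pulse $e^{Ac_{t+1}}$. Along a chain with local ratio $x$, this requirement is exactly $\PP_j(x)>0$ for all $j$; for $m=1$ it reads $(b_1+1)(x-1)<x^2$.

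Your single chain of $m+1$ pulses followed by one interpolation therefore cannot deliver an $O(1)$ output. Before any estimate exists, the pulses must dominate the drift, which forces $x>b_1$. Then, for $m=1$, the coefficient error is of order $e^{-Ab_1c/x}$. The output after certainty-equivalence cancellation is then of order $e^{Ab_1c(x-1/x)}$, which is exponentially large in $A$. It even exceeds the last calibrated output $e^{Axc}$ when $b_1>(1+\sqrt5)/2$.

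The paper instead runs $N$ steps with $c_j=q_N^{(N-j)^2}$, re-interpolating and cancelling at every step. The ratio decreases from above $b_1$ to nearly $1$, which is why positivity is needed on all of $[1,b_1]$. At the end the samples are comparable, so~\eqref{eq:coeferr} permits two cancellation steps down to $W+1$.

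Your adaptive phase is also unproved. Switching ends only if there is a uniform lower bound on the conditional probability that an attempt is \emph{never} rejected. Calibration success alone does not give this. Moreover, with certainty-equivalence least squares on $(f(y_t)^\top,u_t)$, the regressor becomes asymptotically collinear. The Li--Lam information bounds then do not apply, and nothing controls $\hat g_t^{-1}$.

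The paper avoids this in four ways:
\begin{itemize}
\item it updates only $\alpha_k=\theta/(s_kg)$ through $F_{k,n}^{-1}$ and alternates $s_k$;
\item it uses the self-normalized test of Lemma~\ref{lem:certificate}, which certifies $E_{k,n}=O(\log n)$ whenever the test never fails;
\item it proves acceptance probability at least $1/2$ from the calibrated state (Lemmas~\ref{lem:energy} and~\ref{lem:local});
\item it ties the scale to the current output, $A_k=k\log(\mathrm e+|y_{R_k}|)$, so the bound is uniform in the state at which recalibration starts. Simply taking a ``larger $r$'' does not give this uniformity.
\end{itemize}

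In the necessity half, the conditional variance $\sigma^2+\phi_t^\top P_t\phi_t$ is a determinant ratio built from feedback-chosen inputs. By itself it gives no inequality among the $\log|y_s|$. The paper eliminates the inputs through $gu_s=y_{s+1}-\theta^\top f(y_s)-w_{s+1}$. It bounds $M_t=\E[y_{t+1}^2\mid\F_t]$ below by $|\mathscr D_{t,k}|^2/\{C(t+1)^3\Delta_t^2\}$, and bounds $\Delta_t=\prod_{s<t}\sigma_s$ above by a product of outputs. This yields only a \emph{cumulative} bound on $S_{t+1}$, namely the residual bounds on $R_{j,t}$. A separate one-step bound on $W_t$ comes from $\sigma_t^2\ge\det B_{t+1}/\det B_t$, using output regressors alone.

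Moreover, $\mathscr D_{t,k}$ has $k$ leading monomials $M_a^*$, and the feedback can steer them into cancellation. So no pathwise bound of the form ``$\log|y_{t+1}|\gtrsim$ linear combination'' holds without further work. This is the central difficulty. The paper handles it with the derivative nondegeneracy of Lemma~\ref{lem:necessgeometry}, the o-minimal centers of Lemma~\ref{lem:centers}, and Gaussian small-ball estimates on a single event of probability at least $p_0/2$ (Lemma~\ref{lem:necessgood}).

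Finally, there is no single linear recursion with characteristic polynomial $\PP_j$. Growth comes from the identity of Lemma~\ref{lem:necessroot}, which combines $R_{j,t}$, $W_t$ and past increments with positive weights $h_i$. That positivity requires $r>1$ to be a zero of $\min_i\PP_i$, with $j$ the active index so that $b_j\ge r\ge b_{j+1}$. An arbitrary zero of some $\PP_j$ does not suffice.
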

The sufficiency and necessity parts of Theorem~\ref{thm:main} are proved
in Sections~\ref{sec:supervisor}--\ref{sec:local} and
Section~\ref{sec:necessity}, respectively.

\begin{remark}\label{rem:activeintervals}
Different polynomials determine the criterion on different parts of
$[1,b_1]$. For $j=1,\ldots,m-1$, direct subtraction gives
\begin{equation}\label{eq:adjacent}
 \PP_j(x)-\PP_{j+1}(x)
 =x^{m-j-1}(b_{j+1}-x)(x-1)^2.
\end{equation}
For fixed $x>1$, the sequence $\PP_1(x),\ldots,\PP_m(x)$ therefore
is nonincreasing while $b_{j+1}\ge x$ and nondecreasing once $b_{j+1}\le x$.
Consequently, for $x\in[1,b_1]$, $\PP_j(x)$ attains the minimum among
$\PP_1(x),\ldots,\PP_m(x)$ whenever $b_{j+1}\le x\le b_j$ with
$j<m$, and $\PP_m(x)$ attains this minimum when $x\le b_m$.
Thus it suffices to test each polynomial on its
corresponding interval intersected with $[1,b_1]$, omitting empty
intervals. At a shared endpoint the adjacent polynomials agree.
\end{remark}

\begin{corollary}\label{cor:single}
When $m=1$, system~\eqref{eq:plant} is globally stabilizable if and only if $b_1<3$.
\end{corollary}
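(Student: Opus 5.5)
Corollary~\ref{cor:single} follows from Theorem~\ref{thm:main} once we show that, for $m=1$, the criterion~\eqref{eq:criterion} is equivalent to $b_1<3$. With $m=1$ there is only the polynomial $\PP_1$. In~\eqref{eq:poly} the first sum is $b_1x$, the middle term is $x^{m-j+1}=x$, and the last sum is empty. Hence
\[
 \PP_1(x)=x^3-(x-1)(b_1x+x)=x^3-(b_1+1)x^2+(b_1+1)x .
\]
For $x\ge1>0$, positivity of $\PP_1(x)$ is therefore equivalent to positivity of
\[
 q(x)=x^2-(b_1+1)x+(b_1+1).
\]

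Next we determine when $q>0$ on $[1,b_1]$, using the assumption $b_1>1$. The endpoint values are $q(1)=1>0$ and $q(b_1)=1>0$. The vertex of $q$ lies at $x^*=(b_1+1)/2$, which belongs to $(1,b_1)$ because $b_1>1$. So $q>0$ on $[1,b_1]$ holds if and only if the minimum value $q(x^*)$ is positive. This minimum is
\[
 q(x^*)=(b_1+1)-\frac{(b_1+1)^2}{4}=\frac{(b_1+1)(3-b_1)}{4}.
\]
It is positive exactly when $b_1<3$. At $b_1=3$ we get $q(2)=0$, so the criterion fails in the equality case as well, consistent with the strict inequality in the corollary.

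Combining this equivalence with Theorem~\ref{thm:main} gives the corollary. There is no real obstacle here: the only care needed is in reading off the empty sum and the middle term of~\eqref{eq:poly} correctly when $m=j=1$, and in checking that the vertex lies inside the interval rather than at an endpoint.
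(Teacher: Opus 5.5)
Your proof is correct and follows the paper's route: you factor $\PP_1(x)=x\{x^2-(b_1+1)x+b_1+1\}$, analyze the quadratic on $[1,b_1]$, and conclude with Theorem~\ref{thm:main}. The only difference is cosmetic. You use the interior vertex value $(b_1+1)(3-b_1)/4$ for both directions, whereas the paper uses the discriminant when $b_1<3$ and the test point $x=2$, where $\PP_1(2)=2(3-b_1)\le0$, when $b_1\ge3$.
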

\begin{proof}
For $m=1$,
\[
 \PP_1(x)=x\{x^2-(b_1+1)x+b_1+1\}.
\]
The quadratic has positive leading coefficient and discriminant
$(b_1+1)(b_1-3)<0$ when $b_1<3$, so $\PP_1$ is positive on
$[1,b_1]$. For $b_1\ge3$, $2\in[1,b_1]$ and
$\PP_1(2)=2(3-b_1)\le0$. Theorem~\ref{thm:main} gives the result.
\end{proof}

\begin{remark}\label{rem:comparison}
Corollary~\ref{cor:single} gives the critical exponent $3$ under Gaussian
noise, the same threshold established by Li and Guo~\cite{LiGuo2010}
under bounded noise.
\end{remark}

Figure~\ref{fig:region} illustrates the criterion for two drift terms.
The two boundary segments show that different polynomials restrict
different exponent combinations.
\begin{figure}[htbp]
\centering
\includegraphics[width=.95\textwidth]{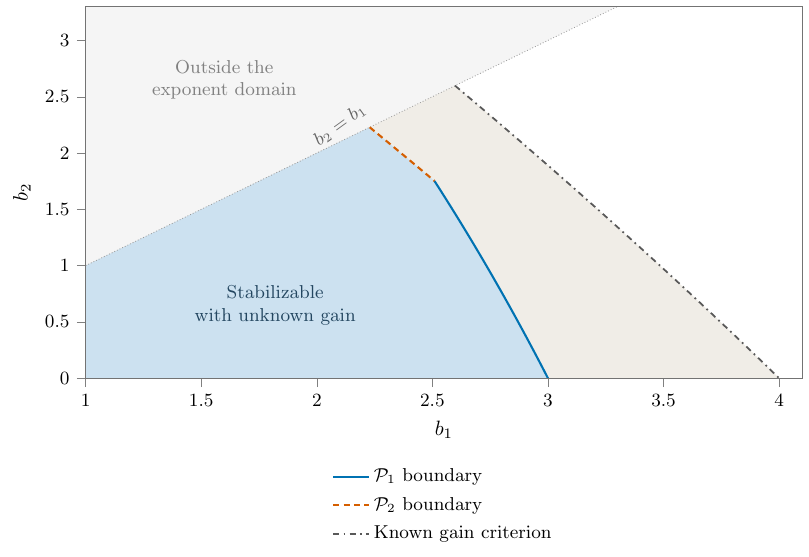}
\caption{Exponent regions for $m=2$ in the domain $1<b_1$ and
$0<b_2<b_1$, using the absolute power model of
Remark~\ref{rem:realexponents} to allow arbitrary real exponents.
The blue region satisfies~\eqref{eq:criterion},
with boundary segments determined by $\PP_1$ (solid) and $\PP_2$
(dashed). The blue and light gray regions together satisfy the known gain
criterion~\eqref{eq:known_criterion} (dash-dotted boundary).
The critical curves are not included in the corresponding stabilizable regions.}
\label{fig:region}
\end{figure}

\begin{corollary}\label{cor:optimality}
If~\eqref{eq:criterion} holds, there is a feedback law depending only on
$b$ such that, for every fixed $\theta\in\R^m$, $g\ne0$, $\sigma>0$,
and deterministic initial state $y_0\in\R$,
\begin{equation}\label{eq:optimalaverage}
 \lim_{T\to\infty}\frac1T\sum_{t=0}^{T-1}y_{t+1}^2=\sigma^2\quad\as.
\end{equation}

For every feedback law, every fixed $\theta\in\R^m$, $g\ne0$,
$\sigma>0$, and deterministic initial state $y_0\in\R$,
\begin{equation}\label{eq:universallower}
 \liminf_{T\to\infty}\frac1T\sum_{t=0}^{T-1}y_{t+1}^2
 \ge\sigma^2\quad\as.
\end{equation}
Thus the constructed feedback attains the smallest possible time average
of the squared output.
\end{corollary}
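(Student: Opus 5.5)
The corollary has two parts, and I would prove them separately. The universal lower bound~\eqref{eq:universallower} holds for any feedback and needs no stabilizability assumption. Write $y_{t+1}=\xi_t+w_{t+1}$, where $\xi_t=\theta^\top f(y_t)+gu_t$ is $\F_t$-measurable. Because $y_0$ is deterministic, $\F_t=\sigma(y_0,w_1,\ldots,w_t)$, so $w_{t+1}$ is independent of $\F_t$. Then
\[
 \sum_{t=0}^{T-1}y_{t+1}^2=\sum_{t=0}^{T-1}\xi_t^2+2\sum_{t=0}^{T-1}\xi_tw_{t+1}+\sum_{t=0}^{T-1}w_{t+1}^2 .
\]
The strong law gives $\frac1T\sum w_{t+1}^2\to\sigma^2$ a.s. The cross term is a martingale transform with conditional variance $\sigma^2\xi_t^2$. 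By the standard local martingale convergence theorem (Chow's lemma; see \cite{ChenGuo1991}), $\sum_{t<T}\xi_tw_{t+1}=O\bigl((s_T)^{1/2+\eta}\bigr)$ for every $\eta>0$, where $s_T=\sum_{t<T}\xi_t^2$. On $\{s_T\to\infty\}$ this is $o(s_T)$. On the complement it converges. In both cases
\[
 \frac1T\sum_{t<T}y_{t+1}^2\ge \frac1T\bigl(s_T-o(s_T)-O(1)\bigr)+\frac1T\sum_{t<T} w_{t+1}^2,
\]
and the first bracket is eventually nonnegative up to $O(1)/T$. Taking $\liminf$ gives $\ge\sigma^2$ a.s.

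For~\eqref{eq:optimalaverage}, I would use the supervisory feedback built in Sections~\ref{sec:supervisor}--\ref{sec:local} under~\eqref{eq:criterion}. It depends only on $b$, since the calibration output ranges and the performance test thresholds are chosen from the exponents alone. The sufficiency analysis shows that almost surely only finitely many calibrations occur. After the last one the adaptive (LS-type, certainty-equivalence) feedback runs forever and the performance test is never violated again. By the same expansion as above, it then suffices to show $\frac1T\sum_{t<T}\xi_t^2\to0$ a.s.; the cross term is then $o(T)$ and the noise term gives $\sigma^2$. Here $\xi_t=\tilde\vartheta_t^\top\phi_t$ is the prediction error of the certainty-equivalence law, with $\phi_t=(f(y_t)^\top,u_t)^\top$ and $\tilde\vartheta_t$ the estimation error. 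Since calibration stops, the finitely many calibration steps contribute $O(1)$.

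The key step, which I expect to be the main obstacle, is proving that the accumulated prediction error is sublinear. I would use the standard LS bound
\[
 \sum_{t<T}\frac{(\tilde\vartheta_t^\top\phi_t)^2}{1+\phi_t^\top P_t\phi_t}=O(\log r_T),
 \qquad r_T=1+\sum_{t<T}\norm{\phi_t}^2 .
\]
Removing the normalization requires $\phi_t^\top P_t\phi_t$ to be bounded, or at least small, along the path. I would obtain this from the performance test: it forces $\sum_{t<T}y_{t+1}^2=O(T)$, hence $r_T$ grows at most polynomially, so $\log r_T=O(\log T)$. Combining this with the local analysis of Section~\ref{sec:local}, which keeps the normalized excitation bounded because the outputs stay in the controlled regime, should give $\sum_{t<T}\xi_t^2=O(\log T)=o(T)$. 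If the paper's local results already state that $\frac1T\sum(y_{t+1}-w_{t+1})^2\to0$, I would simply invoke them. Combining this limit with~\eqref{eq:universallower} gives the optimality statement.
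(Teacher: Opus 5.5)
Your proof of~\eqref{eq:universallower} is correct and differs from the paper only in the martingale tool. The paper reuses the exponential martingale of Lemma~\ref{lem:certificate} to obtain $\sum_{t<T}y_{t+1}^2\ge\frac12E_T+\sum_{t<T}w_{t+1}^2-C_\omega$, with $C_\omega$ independent of $T$. You use the martingale convergence theorem to get $\sum_{t<T}\xi_tw_{t+1}=o(s_T)+O(1)$. Your route needs only conditional moment bounds on the noise, while the paper's is self-contained and gives a constant uniform in $T$. Either suffices.

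For~\eqref{eq:optimalaverage}, your fallback is what the paper does. The bound~\eqref{eq:mainrate} in Proposition~\ref{prop:outer} already contains $E_T=O(\log(T+2))$ and the limit $\sigma^2$, and its hypothesis is verified in the sufficiency proof. However, the mechanism you sketch for the key step would not work, for three reasons.
\begin{enumerate}
\item The control is not certainty equivalence with a least-squares estimate of $\vartheta$. The update~\eqref{eq:ratioupdate} corrects the ratio estimate with gain $kF_{k,n+1}^{-1}$ and regressor $f(x_{k,n})$ only. Hence $e_{k,n}=s_kg\delta_{k,n}^\top f(x_{k,n})$ is not an LS prediction error $\tilde\vartheta_t^\top\phi_t$, and the normalized LS bound does not apply. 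The substitute Lyapunov estimate in Lemma~\ref{lem:energy} requires $s_k=\sgn g$ and holds only up to $\tau_k$.
\item The local analysis of Section~\ref{sec:local} yields the controlled regime only on an event of conditional probability at least $1/2$. It also requires $k\ge k_0$, the correct gain sign, and a good initialization. The attempt that is finally retained need not be of this kind, and its path need not lie in that event. So this analysis cannot remove the normalization on the whole acceptance event.
\item You assert that the performance test forces $\sum_{t<T}y_{t+1}^2=O(T)$, but this is exactly what needs an argument.
\end{enumerate}

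The missing idea is the certificate in Lemma~\ref{lem:certificate}. Completing the square gives $J_{k,n}\ge2q^\top z_{k,n}-q^\top G_{k,n}q$ for every $q$. The choice $q=(\theta^\top,s_kg)^\top$ satisfies $q^\top\chi_{k,n}=e_{k,n}$. Combined with the same exponential martingale, this gives $E_{k,n}\le2J_{k,n}+C_k$ pathwise, for either gain sign and any initialization. On $\{\nu_k=\infty\}$ the test then gives $E_{k,n}\le2H_k\log(n+3)+C_k$ (Lemma~\ref{lem:accept}) directly, with no least-squares analysis. The probabilistic local analysis is needed only to show, through Proposition~\ref{prop:outer}, that some attempt is retained forever.
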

\begin{proof}
The attainment statement is established in
Sections~\ref{sec:supervisor}--\ref{sec:local}.
For the lower bound, write $y_{t+1}=e_t+w_{t+1}$ with
$e_t$ measurable with respect to $\F_t$.
Applying the exponential martingale argument in the proof of
Lemma~\ref{lem:certificate} to this drift and expanding $y_{t+1}^2$ gives
\[
 \sum_{t=0}^{T-1}y_{t+1}^2
 \ge\tfrac12\sum_{t=0}^{T-1}e_t^2
       +\sum_{t=0}^{T-1}w_{t+1}^2-C_\omega,
\]
where $C_\omega$ is finite almost surely and independent of $T$.
Dividing by $T$, dropping the nonnegative drift term, and applying
the strong law of large numbers proves~\eqref{eq:universallower}.
\end{proof}

\begin{remark}\label{rem:realexponents}
The same criterion holds with $y_t^{b_i}$ replaced
by $|y_t|^{b_i}$, for arbitrary real exponents satisfying
$b_1>1$ and $b_1>\cdots>b_m>0$.
In this form, the exponents need not be restricted by the requirement
that $y^{b_i}$ be real-valued for negative $y$.
The proof is given in Appendix~\ref{app:abspowers}.
\end{remark}
\section{Supervisory control}\label{sec:supervisor}
Under~\eqref{eq:criterion}, we combine finite calibration, adaptive estimation of the parameter ratios,
and supervisory switching~\cite{BorrelliMorse1996}. The performance test
below guarantees stabilization whenever an attempt is retained forever.
Sections~\ref{sec:calibration} and~\ref{sec:local} establish the uniform
success probability used in Proposition~\ref{prop:outer} to show that
switching ends almost surely.

Write the closed-loop drift and its cumulative squared magnitude as
\begin{equation}\label{eq:drift}
 e_t=\theta^\top f(y_t)+g u_t,\qquad
 E_T=\sum_{t=0}^{T-1}e_t^2.
\end{equation}

\subsection{Controller construction}
Each calibration consists of $N$ steps for parameter estimation followed
by two steps of drift cancellation, so its length is $L=N+2$.
Select $N$ and $c_1,\ldots,c_N$ by the rule following
Lemma~\ref{lem:schedule}. They depend only on $b$ and remain fixed
across attempts.
In attempt $k$, the controller multiplies the virtual input by a fixed
sign $s_k\in\{-1,1\}$, so the coefficient of the virtual input is $s_kg$.
The controller estimates the parameter ratio vector $\alpha_k=\theta/(s_kg)$.

Let $R_k$ be the starting time of the $k$th calibration, with $R_1=0$
and $R_k=\infty$ if that calibration is not reached.
Algorithm~\ref{alg:supervisor} specifies calibration, feedback, and the
test whose first failure starts the next attempt.
The first subscript identifies the attempt. The second counts steps
within calibration for $\widetilde y_{k,j}$ and $\widetilde v_{k,j}$,
and within feedback for $x_{k,n}$ and $v_{k,n}$.

\begin{algorithm}[htbp]
\caption{Supervisory feedback}\label{alg:supervisor}
\newcommand{\algorithmformula}[1]{%
  \refstepcounter{equation}%
  \makebox[3em][l]{\textup{(\theequation)}}\(\displaystyle #1\)}
\begin{algorithmic}[1]
\FOR{attempt $k=1,2,\ldots$}\label{ln:supervisor_search}
\STATE\label{ln:supervisor_setup} Set $R_k$ to the current system time, $\widetilde y_{k,0}=y_{R_k}$, and
\par\nobreak\noindent
\algorithmformula{\label{eq:calibrationscale}
 s_k=(-1)^{k-1},\quad H_k=2^k,\quad A_k=k\log(\mathrm e+|y_{R_k}|).
}
\smallskip
\FOR{$j=0,\ldots,L-1$}\label{ln:supervisor_calibrate}
\STATE If $m+2\le j\le N$, solve for $(\widetilde\alpha_{k,j},\widetilde\gamma_{k,j})$ in
\par\nobreak\noindent
\algorithmformula{\label{eq:calinterpolation}
 \widetilde\alpha_{k,j}^\top f(\widetilde y_{k,j-i})
 -\widetilde\gamma_{k,j}\widetilde y_{k,j-i+1}=-\widetilde v_{k,j-i},
 \quad i=1,\ldots,m+1.
}\par\nobreak\noindent
Use $(0,1)$ if the coefficient matrix is singular.
\STATE Compute $\widetilde v_{k,j}$ by
\par\nobreak\noindent
\algorithmformula{\label{eq:calcontrol}
 \widetilde v_{k,j}=\begin{cases}
 e^{A_kc_{j+1}},&0\le j\le m+1,\\
 -\widetilde\alpha_{k,j}^\top f(\widetilde y_{k,j})+e^{A_kc_{j+1}},&m+2\le j<N,\\
 -\widetilde\alpha_{k,N}^\top f(\widetilde y_{k,j}),&j=N,N+1.
 \end{cases}
}
\STATE Apply $u_{R_k+j}=s_k\widetilde v_{k,j}$,
and observe $\widetilde y_{k,j+1}=y_{R_k+j+1}$.
\ENDFOR\label{ln:supervisor_calibration_end}
\smallskip
\STATE\label{ln:supervisor_initialize} Set $x_{k,0}=y_{R_k+L}$.
Initialize $\widehat\alpha_{k,0}=\widetilde\alpha_{k,N}$,
$G_{k,0}=H_kI_{m+1}$, and $z_{k,0}=0$.
\FOR{$n=0,1,2,\ldots$}\label{ln:supervisor_feedback}
\STATE\label{ln:supervisor_control}
\algorithmformula{\label{eq:ce}
 v_{k,n}=-\widehat\alpha_{k,n}^\top f(x_{k,n}),
 \qquad
 \chi_{k,n}=(f(x_{k,n})^\top,v_{k,n})^\top.
}
\STATE\label{ln:supervisor_observe} Apply $u_{R_k+L+n}=s_kv_{k,n}$ and observe $x_{k,n+1}=y_{R_k+L+n+1}$.
\STATE\label{ln:supervisor_monitor} Update
$G_{k,n+1}=G_{k,n}+\chi_{k,n}\chi_{k,n}^\top$ and $z_{k,n+1}=z_{k,n}+\chi_{k,n}x_{k,n+1}$.
\STATE\label{ln:supervisor_update}
Let $F_{k,n+1}$ be the upper left $m\times m$ block of $G_{k,n+1}$ and update
\par\nobreak\noindent
\algorithmformula{\label{eq:ratioupdate}
 \widehat\alpha_{k,n+1}=\widehat\alpha_{k,n}+
 kF_{k,n+1}^{-1}f(x_{k,n})x_{k,n+1}.
}
\STATE\label{ln:supervisor_switch} \textbf{if}
$z_{k,n+1}^\top G_{k,n+1}^{-1}z_{k,n+1}>H_k\log(n+4)$
\textbf{then break}.
\ENDFOR
\ENDFOR
\end{algorithmic}
\end{algorithm}

Alternating $s_k$ lets the controller try both gain signs.
The scale $A_k$ controls the pulse amplitudes, whereas $H_k$
sets the initial information weight and the performance threshold.
The attempt index $k$ also serves as the correction coefficient
in~\eqref{eq:ratioupdate} and remains fixed throughout that attempt.

The initial pulses supply the interpolation observations.
Equation~\eqref{eq:calinterpolation} is formed by
dividing the system equation by $s_kg$ and omitting the noise term.
Here $\widetilde\alpha_{k,j}$ and $\widetilde\gamma_{k,j}$ estimate
$\theta/(s_kg)$ and $1/(s_kg)$, respectively.
Later inputs combine pulses with drift cancellation. The final two
inputs use the terminal estimate without pulses to reduce the output.

\Needspace{4\baselineskip}
During feedback,~\eqref{eq:ce} cancels the estimated drift, giving
\[
 x_{k,n+1}
 =s_kg(\alpha_k-\widehat\alpha_{k,n})^\top f(x_{k,n})+w_{R_k+L+n+1}.
\]

The matrix $F_{k,n}=H_kI_m+\sum_{j=0}^{n-1}f(x_{k,j})f(x_{k,j})^\top$ records information
from the drift regressors. Its updated inverse $F_{k,n+1}^{-1}$ weights
the correction in~\eqref{eq:ratioupdate}, which uses $x_{k,n+1}$ as a noisy
observation of the remaining drift.

The update in~\eqref{eq:ratioupdate} uses the inverse of
$F_{k,n+1}\succeq H_kI_m$. Together with the Borel calibration maps and
observable switching test, this makes
Algorithm~\ref{alg:supervisor} a Borel feedback law.

\subsection{Performance bounds}
For each $k$ with $R_k<\infty$, the feedback interval
$[R_k+L,R_{k+1})$ contains $\nu_k\triangleq R_{k+1}-R_k-L$ steps.
For the output segment $(x_{k,n})_{0\le n\le\nu_k}$, define its drift
and drift energy by
\[
 \begin{aligned}
 e_{k,n}&\triangleq e_{R_k+L+n}=\theta^\top f(x_{k,n})+s_kgv_{k,n},
       &&0\le n<\nu_k,\\
 E_{k,n}&\triangleq\sum_{j=0}^{n-1}e_{k,j}^2
       =E_{R_k+L+n}-E_{R_k+L},
       &&0\le n\le\nu_k.
 \end{aligned}
\]
With $w_{k,n+1}\triangleq w_{R_k+L+n+1}$ and
$\F_{k,n}\triangleq\F_{R_k+L+n}$, its dynamics satisfy
\[
 x_{k,n+1}=e_{k,n}+w_{k,n+1},\qquad
 w_{k,n+1}\mid\F_{k,n}\sim N(0,\sigma^2),
 \quad 0\le n<\nu_k.
\]
Here $\F_{k,0}=\F_{R_k+L}$ represents the information available
at the start of feedback.

The performance test can be written as
\begin{equation}\label{eq:tests}
 J_{k,n}\triangleq z_{k,n}^\top G_{k,n}^{-1}z_{k,n}\le H_k\log(n+3)
\end{equation}
and is checked at local times $1\le n\le\nu_k$.
Its first failure occurs at $n=\nu_k$, with $\nu_k=\infty$ if it
never fails.
The following lemma bounds the drift energy using $J_{k,n}$.

\begin{lemma}\label{lem:certificate}
Let $k\ge1$ and work on $\{R_k<\infty\}$.
Almost surely, the bound
\[
 E_{k,n}\le2J_{k,n}+C_k,\qquad 0\le n\le\nu_k,
\]
holds for every finite integer $n$, where $C_k$ is a finite nonnegative
random variable that does not depend on $n$.
\end{lemma}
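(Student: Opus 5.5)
The plan is to compare $J_{k,n}$ with the drift energy through the true parameter of attempt $k$, and to control the resulting noise cross term with an exponential martingale bound.

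\textbf{Linear representation.} Put $\vartheta_k=(\theta^\top,s_kg)^\top$. By~\eqref{eq:ce}, $e_{k,j}=\theta^\top f(x_{k,j})+s_kgv_{k,j}=\vartheta_k^\top\chi_{k,j}$. Hence $x_{k,j+1}=\vartheta_k^\top\chi_{k,j}+w_{k,j+1}$. Summing the updates in Algorithm~\ref{alg:supervisor} for $n\le\nu_k$ gives
\[
 z_{k,n}=(G_{k,n}-H_kI)\vartheta_k+S_{k,n},\qquad S_{k,n}=\sum_{j=0}^{n-1}\chi_{k,j}w_{k,j+1}.
\]
Moreover $\vartheta_k^\top(G_{k,n}-H_kI)\vartheta_k=\sum_{j<n}e_{k,j}^2=E_{k,n}$.

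\textbf{Variational lower bound for $J_{k,n}$.} Since $G_{k,n}\succ0$, we have $(z-G\vartheta)^\top G^{-1}(z-G\vartheta)\ge0$ for every vector, which gives $z^\top G^{-1}z\ge2\vartheta^\top z-\vartheta^\top G\vartheta$. Take $\vartheta=\vartheta_k$ and write $M_{k,n}=\vartheta_k^\top S_{k,n}=\sum_{j<n}e_{k,j}w_{k,j+1}$. Then
\[
 J_{k,n}\ge 2E_{k,n}+2M_{k,n}-E_{k,n}-H_k\|\vartheta_k\|^2
 =E_{k,n}+2M_{k,n}-H_k\|\vartheta_k\|^2 .
\]
It therefore suffices to show that $-2M_{k,n}\le\tfrac12E_{k,n}+C'_k$ for all $n\le\nu_k$, with $C'_k$ a finite random variable independent of $n$. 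This would give $E_{k,n}\le2J_{k,n}+2C'_k+2H_k\|\vartheta_k\|^2$, which is the claim with $C_k=2C'_k+2H_k\|\vartheta_k\|^2$.

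\textbf{Martingale step (main point).} This step takes the most care, because $\nu_k$ is random and possibly infinite and we work only on $\{R_k<\infty\}$.

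1. Each $e_{k,j}$ is $\F_{k,j}$-measurable, and $w_{k,j+1}\mid\F_{k,j}\sim N(0,\sigma^2)$. So for $a>0$, the stopped process $Z_n=\exp\bigl(-aM_{k,n\wedge\nu_k}-\tfrac{a^2\sigma^2}{2}E_{k,n\wedge\nu_k}\bigr)$ satisfies $\E[Z_{n+1}\mid\F_{k,n}]=Z_n$ on $\{n<\nu_k\}$, and $Z_{n+1}=Z_n$ otherwise. Hence $Z_n$ is a nonnegative supermartingale with $Z_0=1$.

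2. To make the $\F_{k,n}$ a genuine filtration, I would condition on $\F_{R_k+L}$. Since $R_k+L$ is a stopping time, the conditional noise law is still $N(0,\sigma^2)$.

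3. Ville's maximal inequality gives $\P(\sup_nZ_n\ge c)\le1/c$, so $\sup_nZ_n<\infty$ almost surely on $\{R_k<\infty\}$. Taking logarithms, $-M_{k,n}\le\tfrac{a\sigma^2}{2}E_{k,n}+a^{-1}\log\sup_mZ_m$ for all $n\le\nu_k$.

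4. Choose $a=1/(2\sigma^2)$. Then $-2M_{k,n}\le\tfrac12E_{k,n}+4\sigma^2\log^+\sup_mZ_m$, so $C'_k=4\sigma^2\log^+\sup_mZ_m$ works. It is finite, nonnegative and independent of $n$.

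Applied to an arbitrary $\F_t$-measurable drift, the same argument gives the inequality used in the proof of Corollary~\ref{cor:optimality}.
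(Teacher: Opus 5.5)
Your proof is correct and follows the paper's argument. You use the same completing-the-square bound at $q=(\theta^\top,s_kg)^\top$ and the same exponential martingale $\exp\{-\frac{1}{2\sigma^2}\sum_j e_{k,j}w_{k,j+1}-\frac{1}{8\sigma^2}E_{k,n}\}$ stopped at $\nu_k$, which yields the identical constant $C_k=2H_k\|\vartheta\|^2+8\sigma^2\log\sup_n Z_n$. The only differences are cosmetic: you bound $\sup_n Z_n$ with Ville's maximal inequality instead of the nonnegative martingale convergence theorem, and your conditioning on $\F_{R_k+L}$ is harmless but unnecessary, since $(\F_{R_k+L+n})_{n\ge0}$ is already a filtration.
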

\begin{proof}
Completing the square gives
\[
 J_{k,n}\ge2q^\top z_{k,n}-q^\top G_{k,n}q,
 \qquad q\in\R^{m+1}.
\]
For $q=(\theta^\top,s_kg)^\top$, we have
$q^\top\chi_{k,n}=e_{k,n}$ and $\norm q=\norm\vartheta$. Hence
\begin{equation}\label{eq:ridge_true}
 J_{k,n}\ge E_{k,n}+2\sum_{j=0}^{n-1}e_{k,j}w_{k,j+1}-H_k\norm\vartheta^2.
\end{equation}
Since $\nu_k$ is a stopping time for $(\F_{k,n})$ and $e_{k,n}$ is
$\F_{k,n}$-measurable before that time,
\[
 L_{k,n}=\exp\left\{-\frac1{2\sigma^2}\sum_{0\le j<n\wedge\nu_k}e_{k,j}w_{k,j+1}
                   -\frac1{8\sigma^2}E_{k,n\wedge\nu_k}\right\}
\]
is a nonnegative martingale starting at one. By~\eqref{eq:ridge_true},
\[
 E_{k,n}\le2J_{k,n}+2H_k\norm\vartheta^2+8\sigma^2\log L_{k,n},
 \qquad 0\le n\le\nu_k.
\]
The nonnegative martingale convergence theorem gives
$\sup_{n\ge0}L_{k,n}<\infty$ almost surely. Hence the claimed bound holds
with $C_k=2H_k\norm\vartheta^2+8\sigma^2\log(\sup_{n\ge0}L_{k,n})$.
\end{proof}

\begin{lemma}\label{lem:accept}
For either gain sign and every initialization, the following relations
hold as $n\to\infty$ on the event $\{\nu_k=\infty\}$.
\[
 E_{k,n}=O(\log(n+2)),\quad
 \lim_{n\to\infty}\frac1n\sum_{j=0}^{n-1}x_{k,j+1}^2=\sigma^2,\quad
 |x_{k,n}|=O(\sqrt{\log(n+2)})\quad\as
\]
\end{lemma}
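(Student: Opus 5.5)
The plan is to combine the performance test with Lemma~\ref{lem:certificate}. On $\{\nu_k=\infty\}$ the test~\eqref{eq:tests} holds at every $n\ge1$, so $J_{k,n}\le H_k\log(n+3)$. Lemma~\ref{lem:certificate} then gives
\[
 E_{k,n}\le 2H_k\log(n+3)+C_k,
\]
with $C_k$ finite almost surely and independent of $n$. Since $H_k=2^k$ is fixed within the attempt, this is the first claim $E_{k,n}=O(\log(n+2))$. Nothing here uses the gain sign $s_k$ or the initialization: the certificate holds for the true vector $q=(\theta^\top,s_kg)^\top$ whatever the estimates are. The only structure needed is that $e_{k,n}$ is $\F_{k,n}$-measurable and $w_{k,n+1}\mid\F_{k,n}\sim N(0,\sigma^2)$.

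For the time average, I expand
\[
 \sum_{j<n}x_{k,j+1}^2=E_{k,n}+2\sum_{j<n}e_{k,j}w_{k,j+1}+\sum_{j<n}w_{k,j+1}^2 .
\]
The last sum divided by $n$ tends to $\sigma^2$. This follows from the strong law of large numbers, since on $\{R_k<\infty\}$ the shifted noise $(w_{R_k+L+j+1})_j$ is iid $N(0,\sigma^2)$ by the strong Markov property of the iid sequence at the stopping time $R_k+L$. For the cross term, the martingale transform estimate gives
\[
 \sum_{j<n}e_{k,j}w_{k,j+1}=O\bigl(\sqrt{E_{k,n}\log E_{k,n}}\bigr)
\]
on $\{E_{k,\infty}=\infty\}$, and a finite limit on $\{E_{k,\infty}<\infty\}$. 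Either way the cross term is $O(\log n)$. Alternatively, the exponential martingale $L_{k,n}$ from the proof of Lemma~\ref{lem:certificate}, applied with a scaled multiplier, bounds $|\sum e w|\le \tfrac14E_{k,n}+O(1)$. Hence
\[
 \sum_{j<n}x_{k,j+1}^2=n\sigma^2+O(\log n)+o(n),
\]
which gives the limit $\sigma^2$.

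For the pointwise bound, write $|x_{k,n+1}|\le|e_{k,n}|+|w_{k,n+1}|$. We have $e_{k,n}^2\le E_{k,n+1}=O(\log(n+3))$. For the noise, the Gaussian tail bound gives
\[
 \P\bigl(|w_{k,n+1}|>2\sigma\sqrt{\log(n+2)}\bigr)\le 2(n+2)^{-2},
\]
which is summable, so by Borel--Cantelli $|w_{k,n+1}|=O(\sqrt{\log(n+2)})$ almost surely. Both terms are therefore $O(\sqrt{\log(n+2)})$.

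The main obstacle is the first step's reliance on Lemma~\ref{lem:certificate} holding on the whole stopped horizon. We also need care that every ``almost surely'' statement is applied to the post-$R_k$ noise, so that independence of $w_{k,n+1}$ from $\F_{k,n}$ is legitimate; this is why $R_k+L$ must be a stopping time. The remaining steps are routine.
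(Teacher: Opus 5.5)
Your proof is correct. Steps one and three coincide with the paper's argument. The difference is in the time-average step.

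The paper never estimates the cross term. It applies the triangle inequality in $\ell^2$ to $x_{k,j+1}=e_{k,j}+w_{k,j+1}$, which gives
\[
 \Bigl|\bigl(\tfrac1n\textstyle\sum_{j<n}x_{k,j+1}^2\bigr)^{1/2}-\bigl(\tfrac1n\sum_{j<n}w_{k,j+1}^2\bigr)^{1/2}\Bigr|\le(E_{k,n}/n)^{1/2}\to0 .
\]
This needs only $E_{k,n}=o(n)$, and is deterministic once the first claim is known.

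You instead expand the square and control $\sum_{j<n}e_{k,j}w_{k,j+1}$ by a martingale estimate. Of your two options, the exponential supermartingale $\exp\{\lambda\sum e w-\tfrac{\lambda^2\sigma^2}{2}E\}$ with $\lambda=\pm1/(2\sigma^2)$ is the cleaner one and is fully self-contained. This route costs an extra probabilistic ingredient, but it gives more: $\sum_{j<n}x_{k,j+1}^2-\sum_{j<n}w_{k,j+1}^2=O(\log n)$, a logarithmic excess over the noise energy. The triangle inequality only yields an $O(\sqrt{n\log n})$ excess.

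A minor point: the strong Markov property is not needed for the SLLN and Borel--Cantelli steps. A finite random shift of the global iid sequence preserves both conclusions. The stopping-time property of $R_k+L$ matters only for the conditional Gaussianity $w_{k,n+1}\mid\F_{k,n}\sim N(0,\sigma^2)$ used in the martingale arguments.
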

\begin{proof}
On $\{\nu_k=\infty\}$, the almost sure bound in
Lemma~\ref{lem:certificate} and~\eqref{eq:tests} give
\[
 E_{k,n}\le2H_k\log(n+3)+C_k=O(\log(n+2))\quad\as
\]
The triangle inequality and $x_{k,j+1}=e_{k,j}+w_{k,j+1}$ give
\[
 \left|\sqrt{\frac1n\sum_{j=0}^{n-1}x_{k,j+1}^2}
       -\sqrt{\frac1n\sum_{j=0}^{n-1}w_{k,j+1}^2}\right|
 \le\sqrt{\frac{E_{k,n}}n}\longrightarrow0.
\]
The strong law of large numbers gives
$n^{-1}\sum_{j=0}^{n-1}w_{k,j+1}^2\to\sigma^2$ almost surely,
so the same limit holds for $n^{-1}\sum_{j=0}^{n-1}x_{k,j+1}^2$.
Gaussian tails and the Borel--Cantelli lemma give
$|w_{k,n}|=O(\sqrt{\log(n+2)})$ almost surely.
Since $e_{k,n-1}^2\le E_{k,n}$, the identity $x_{k,n}=e_{k,n-1}+w_{k,n}$ proves
the stated bound on $x_{k,n}$.
\end{proof}

\subsection{Termination of switching}
\begin{proposition}\label{prop:outer}
Suppose that, for every fixed $(\theta,g,\sigma)$, there is an integer
$k_0$ such that, for every $k\ge k_0$ with $s_k=\sgn g$,
\[
 \P(\nu_k=\infty\mid\F_{R_k})\ge\tfrac38
 \quad\text{a.s. on }\{R_k<\infty\}.
\]
Here $\F_{R_k}$ represents the information available at the start
of the $k$th calibration.
Then, for every fixed parameter and deterministic initial state,
Algorithm~\ref{alg:supervisor} has only finitely many calibrations and
switches almost surely and satisfies
\begin{equation}\label{eq:mainrate}
 E_T=O(\log(T+2)),\quad
 \lim_{T\to\infty}\frac1T\sum_{t=0}^{T-1}y_{t+1}^2=\sigma^2,\quad
 |y_t|=O(\sqrt{\log(t+2)})\quad\as
\end{equation}
\end{proposition}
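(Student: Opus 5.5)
The plan has two parts. First show that the number of calibrations $K=\sup\{k:R_k<\infty\}$ is finite almost surely. Then transfer Lemma~\ref{lem:accept} from local to global time.

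\textbf{Finiteness of $K$.} We use a conditional Borel--Cantelli (Lévy) argument along the attempts with the correct sign. Fix $(\theta,g,\sigma)$ and $y_0$, and let $k_0$ be as in the hypothesis. Since $s_k=(-1)^{k-1}$ alternates, the indices $k\ge k_0$ with $s_k=\sgn g$ form an infinite set $\{k_1<k_2<\cdots\}$, containing every second integer. Set $B_i=\{R_{k_i}<\infty,\ \nu_{k_i}<\infty\}$. On $\{R_{k_i}<\infty\}$ the event $\{\nu_{k_i}<\infty\}$ coincides with $\{R_{k_i+1}<\infty\}$, and this is $\F_{R_{k_{i+1}}}$-measurable. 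The hypothesis gives
\[
 \P(B_i\mid\F_{R_{k_i}})\le\tfrac58\,\ind{R_{k_i}<\infty}.
\]
Iterating the tower property over $i=1,\ldots,n$ yields $\P(R_{k_{n+1}}<\infty)\le(5/8)^n$. Because $R_k<\infty$ is decreasing in $k$, this gives $\P(K=\infty)=\lim_n\P(R_{k_{n+1}}<\infty)=0$.

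Two measurability points must be checked. The events $\{R_k<\infty\}$ must be nested, which holds by construction. Each $R_k$ must be a stopping time for $(\F_t)$, which is true because the switching test is observable. Then $\F_{R_k}$ is well defined and contains $\{R_{k'}<\infty\}$ for $k'\le k$. A simpler route, avoiding any analysis of wrong-sign attempts, is to note that every attempt $k<k_0$ either terminates or lasts forever; in both cases it is covered.

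\textbf{From local to global time.} On $\{K<\infty\}$, the last attempt has $R_K<\infty$ and $\nu_K=\infty$. All earlier attempts occupy the finite time interval $[0,R_K+L)$. This interval is finite because each earlier attempt ended, so each $\nu_k<\infty$. On that interval, $E_{R_K+L}$, $\max_{t\le R_K+L}|y_t|$ and the partial sums of $y_{t+1}^2$ are finite random quantities. For $T\ge R_K+L$, write $T=R_K+L+n$. Then
\[
 E_T=E_{R_K+L}+E_{K,n}=O(\log(n+2))=O(\log(T+2))
\]
by Lemma~\ref{lem:accept}, which holds for either gain sign and every initialization. The Cesàro limit follows because a finite prefix does not affect it and $n/T\to1$. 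The bound $|y_t|=O(\sqrt{\log(t+2)})$ follows in the same way, by absorbing the finite prefix into the path-dependent constant.

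One subtlety needs care: Lemma~\ref{lem:accept} is applied on the random event $\{K=k,\ \nu_k=\infty\}$. This is legitimate because it is an almost sure statement on $\{\nu_k=\infty\}$ for each fixed $k$, and there are countably many $k$.

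\textbf{Main obstacle.} I expect the delicate step to be the first part, namely justifying the product bound. It relies on the hypothesis holding almost surely on $\{R_k<\infty\}$ with respect to the stopped $\sigma$-field, and on the correct-sign indices being deterministic. The latter holds since $s_k$ is deterministic. The rest is bookkeeping.
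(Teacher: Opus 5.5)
Your proposal is correct and follows essentially the same route as the paper. Both arguments bound $\P(R_{k_*+2j}<\infty)\le(5/8)^j$ along the correct-sign attempts using the tower property and $\{R_k<\infty\}\in\F_{R_k}$. Both then apply Lemma~\ref{lem:accept} to the permanently accepted attempt, using a countable union of exceptional null sets over $k$, and absorb the finite prefix $[0,R_K+L)$ into the path-dependent constants.
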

\begin{proof}
Choose $k_*\ge k_0$ with $s_{k_*}=\sgn g$.
The calibration starting times $R_k$ are stopping times.
For $k=k_*+2j$, $j\ge0$,
reaching $R_{k+2}$ requires the attempt at $R_k$ to fail.
The conditional failure probability is at most $5/8$, so
\[
 \P(R_{k+2}<\infty)
 \le\E\!\left[\ind{R_k<\infty}
       \P(\nu_k<\infty\mid\F_{R_k})\right]
 \le\tfrac58\P(R_k<\infty).
\]
Iteration gives
\begin{equation}\label{eq:trialtail}
 \P(R_{k_*+2j}<\infty)\le(5/8)^j,\qquad j\ge0.
\end{equation}
A failed test is detected at a finite time, and each calibration has
length $L$. Thus~\eqref{eq:trialtail} proves that an attempt is
permanently accepted after finitely many attempts almost surely.

For each $k$ with $R_k<\infty$, condition on $\F_{R_k+L}$ and apply
Lemma~\ref{lem:accept} on $\{\nu_k=\infty\}$.
A countable union of the exceptional null sets is null, so the last
feedback interval satisfies the lemma's conclusions almost surely.
The preceding trajectory has finite length and finite drift energy,
so the same asymptotic conclusions hold for the full trajectory,
giving~\eqref{eq:mainrate}.
\end{proof}
\section{Finite calibration from an arbitrary state}\label{sec:calibration}
We show that increasing the calibration scale in Algorithm~\ref{alg:supervisor}
achieves any prescribed accuracy for the parameter ratios in a fixed number
of steps under bounded disturbances. The terminal output is bounded in
terms of the disturbance bound.
Consider an attempt with $s_k=\sgn g$, so that $s_kg>0$.
In this section, write $g$ for this gain and fix $a\ge2$ such that
$\norm\theta_\infty\le a$ and $a^{-1}\le g\le a$.
We omit the attempt index, write $v_j$ for $\widetilde v_{k,j}$,
and count steps from the start of calibration.

\subsection{Choice of the calibration scales}
We prescribe target magnitudes $e^{Ac_j}$, where $A>0$, to separate
the interpolation samples and let subsequent pulses dominate the residual drift.
For an integer $N\ge m+3$ and a base $q_N\in(0,1)$, define
$c=(c_1,\ldots,c_N)$ by
\begin{equation}\label{eq:schedulechoice}
 c_j=q_N^{(N-j)^2},
 \qquad j=1,\ldots,N.
\end{equation}
Its successive ratios decrease, while $c_N=1$.
To state the conditions for choosing $N$ and $q_N$, define, for $j=1,\ldots,m$
and $m+1\le t\le N$,
\begin{align}
 L_j(c;t)&=\sum_{i=1}^j b_i c_{t-i+1}+c_{t-j+1}
                +\sum_{i=j+1}^m b_i c_{t-i},\label{eq:Lj}\\
 V(c;t)&=\max_{1\le j\le m}L_j(c;t).\label{eq:V}
\end{align}
For $m+1\le t<N$, also set
$L_0(c;t)=c_{t+1}+\sum_{i=1}^m b_i c_{t-i}$.
The distinctness condition below is used to identify a unique dominant
term in the interpolation determinant.

\begin{lemma}\label{lem:schedule}
Assume~\eqref{eq:criterion}. For every $0<\rho<1$, there exist an integer
$N\ge m+3$ and a rational $q_N\in(0,1)$ such that~\eqref{eq:schedulechoice} satisfies
$0<c_1<\cdots<c_N=1$,
\begin{align}
 c_{t+1}&>b_1c_t,&& 1\le t\le m+1,\label{eq:fast}\\
 V(c;t)-V(c;t-1)&<c_{t+1},&& m+2\le t<N,\label{eq:slow}\\
 c_{N-m-1}&>1-\rho\label{eq:terminal}
\end{align}
and, for each $m+1\le t<N$,
\begin{equation}\label{eq:scaledistinct}
 L_p(c;t)\ne L_q(c;t),\qquad 0\le p<q\le m.
\end{equation}
\end{lemma}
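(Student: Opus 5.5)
The plan is to reduce every condition to a statement about local ratios of the schedule. Write $\lambda_t=c_{t+1}/c_t=q_N^{-(2(N-t)-1)}$. Monotonicity $0<c_1<\dots<c_N=1$ is immediate for any $q_N\in(0,1)$. The ratio $\lambda_t$ decreases in $t$, with $\lambda_{N-1}=q_N^{-1}$. Over any window of $m+2$ consecutive indices it changes only by a factor between $1$ and $q_N^{-2(m+2)}$. Fix $\rho$.

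\emph{Choice of $q_N$ and $N$.} First pick $q=q_N$ close enough to $1$ that $q^{(m+1)^2}>1-\rho$, which is \eqref{eq:terminal}. Then \eqref{eq:fast} asks that $\lambda_t>b_1$ for $t\le m+1$. Since $\lambda_t\ge\lambda_{m+1}=q^{-(2(N-m-1)-1)}$, this holds once $N$ is large for the chosen $q$. So the order is: first $q$ close to $1$ (also for the slow condition below), then $N$ large, then a final perturbation of $q$ to a suitable rational.

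\emph{Slow condition \eqref{eq:slow}.} This is the main step. Dividing \eqref{eq:slow} by $c_{t-m}$ expresses everything through the ratios $\lambda_{t-m},\dots,\lambda_t$. If these were all equal to a single $x>1$, so that $c_s\propto x^s$, we would have $L_j(c;t-1)=L_j(c;t)/x$ for each $j$, hence $V(c;t)-V(c;t-1)=(1-x^{-1})V(c;t)$. Multiplying the requirement $(1-x^{-1})L_j(c;t)<c_{t+1}$ by $x^{m+1}/c_{t-m}$ gives exactly $\PP_j(x)>0$ from \eqref{eq:poly}, for every $j$. So the criterion \eqref{eq:criterion} is precisely the geometric version of \eqref{eq:slow}.

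To pass to the actual schedule, I would split according to the size of $x=\lambda_t$.
\begin{itemize}
\item For $x\in[1,b_1]$, compactness and \eqref{eq:criterion} give a uniform margin $\min_j\PP_j\ge\delta>0$.
\item For $x\ge X$ with $X$ large, the target $c_{t+1}$ dominates directly. Indeed $L_j(c;t)/c_{t+1}\le\sum_i b_i/\lambda_t+1/\lambda_t+\dots\to0$, so \eqref{eq:slow} holds trivially there.
\item The range $x\in[b_1,X]$ is where I expect the real obstacle. There one must check that $\PP_j>0$ persists, or argue directly that for $x\ge b_1$ the inequality still holds. The obvious tool is Remark~\ref{rem:activeintervals}: it shows that beyond $b_1$ the minimum over $j$ is $\PP_1$, so only positivity of $\PP_1(x)=x^{m+2}-(x-1)[(b_1+1)x^m+\sum_{i\ge2}b_ix^{m-i}]$ on $[b_1,X]$ is needed. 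I would try to get this from $\PP_1(b_1)>0$ together with the fact that $\PP_1(x)/x^{m+1}$ is increasing for $x\ge b_1$. I would first verify this monotonicity with the bounds $b_i<b_1\le x$, and otherwise fall back on shrinking the admissible ratios.
\end{itemize}
Once a margin is available on the whole compact range $[1,X]$, continuity of the normalized quantities in $(\lambda_{t-m},\dots,\lambda_t)$ finishes the step. For $q$ close to $1$ these ratios lie within a factor $q^{-2(m+2)}$ of each other, uniformly in $t$ and $N$, so the margin survives. The maximum $V$ causes no extra trouble, because the bound is proved for every $j$ simultaneously: $V(c;t)-V(c;t-1)\le\max_j[L_j(c;t)-L_j(c;t-1)]$.

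\emph{Distinctness \eqref{eq:scaledistinct}.} With $N$ fixed, each difference $L_p(c;t)-L_q(c;t)$ is a finite combination $\sum\beta_\ell q^{\kappa_\ell}$ with distinct integer exponents $\kappa_\ell=(N-s)^2$ and real coefficients built from the $b_i$ and $1$. Such a function is real-analytic on $(0,1)$. To see it is not identically zero, compare the coefficient at a suitable index $s$: $L_p$ and $L_q$ put different weights ($b_i$, $1$, or $b_i+1$) on the indices near $t-p+1$ and $t-q+1$. Its zeros in a compact subinterval are therefore finite. There are finitely many pairs $(p,q)$ and finitely many $t$, and all the strict inequalities above are open in $q$. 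Hence we can move $q$ slightly to a rational value avoiding all these zeros while keeping \eqref{eq:fast}, \eqref{eq:slow} and \eqref{eq:terminal}.
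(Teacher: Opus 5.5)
There is a genuine gap, and it sits exactly where you expected trouble: you never establish that $\min_j\PP_j=\PP_1$ stays positive on $[b_1,X]$. Condition~\eqref{eq:criterion} only controls $[1,b_1]$. With your ordering (fix $q$, then let $N\to\infty$), the ratios $\lambda_t$ for $m+2\le t<N$ run through $[q^{-1},q^{-(2(N-m-2)-1)}]$ in multiplicative steps $q^{-2}$. So they cannot avoid $[b_1,X]$. Your direct bound $V(c;t)-V(c;t-1)<V(c;t)\le\beta c_t$, with $\beta=1+\sum_i b_i$, only disposes of $\lambda_t>\beta$.

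The monotonicity you propose is false. For $m=1$ one has $\PP_1(x)/x^2=x-(b_1+1)+(b_1+1)/x$, whose derivative $1-(b_1+1)/x^2$ is negative at $x=b_1$ whenever $b_1<(1+\sqrt5)/2$. Take for example $b_1=6/5$, a case where \eqref{eq:criterion} holds. The fallback of ``shrinking the admissible ratios'' is unavailable in your parametrization, for the reason just given.

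The missing ingredient is an algebraic lower bound that needs no criterion at all. For $x\ge1$, $b_i\le b_1$ gives $(x-1)\sum_{i=2}^m b_ix^{m-i}\le b_1(x^{m-1}-1)$. The remaining cubic then factors, giving
\[
 \PP_1(x)\ge x^{m-1}(x-b_1)(x^2-x+1)+b_1>0,\qquad x\ge b_1 .
\]
This is what the paper uses.

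With this bound inserted, your compactness argument on $[1,\beta q^{-2(m+1)}]$ goes through. Your route is then a valid alternative to the paper's. You fix $q$ and handle large ratios by the direct bound. The paper instead takes $q_N=e^{-1/N}$, which keeps all ratios in $(1,e^2)$ and works with $O(N^{-1})$ expansions; it also needs the bound, on $[b_1,e^2]$.

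Your ``shrinking'' idea can be made to work, but only if $q$ depends on $N$. Take $q_N=e^{-\kappa/N}$ with $e^{2\kappa}$ slightly above $b_1$. Then all ratios lie in $(1,e^{2\kappa})$, and \eqref{eq:fast} and \eqref{eq:terminal} still hold for large $N$. Continuity of $\PP_1$ at $b_1$, where $\PP_1(b_1)>0$ by \eqref{eq:criterion}, then supplies the margin. As written, however, the step is unproved.

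Your distinctness and perturbation arguments are fine. Note only that the slow condition at $t$ involves $c_{t-m-1}$, so the relevant window is $\lambda_{t-m-1},\ldots,\lambda_t$.
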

\begin{proof}
Condition~\eqref{eq:criterion} implies $b_1<3$. Otherwise,
$\PP_1(2)\le2^m(3-b_1)\le0$.
Take $q_N=e^{-1/N}$. The successive ratios
$r_j=c_j/c_{j-1}$ satisfy
\[
 r_j=e^{[2(N-j)+1]/N},
 \qquad 2\le j\le N.
\]
Thus $1<r_j<e^2$, and for each fixed $j\ge2$,
$\lim_{N\to\infty}r_j=e^2>b_1$, proving~\eqref{eq:fast}.
Also, $c_{N-m-1}=e^{-(m+1)^2/N}$ gives
$\lim_{N\to\infty}c_{N-m-1}=1$, proving~\eqref{eq:terminal}.

To prove~\eqref{eq:slow}, note that~\eqref{eq:adjacent} gives
$\min_{1\le j\le m}\PP_j(r)=\PP_1(r)$ for $r\ge b_1$.
Using $b_i\le b_1$ and the finite geometric sum, we obtain
\[
 \PP_1(r)\ge r^{m-1}(r-b_1)(r^2-r+1)+b_1>0,
 \qquad r\ge b_1.
\]
Together with~\eqref{eq:criterion} and continuity, this gives
$\min_{1\le r\le e^2,\,1\le j\le m}\PP_j(r)/r^{m+1}>0$.

For $h=0,\ldots,m+1$, the identity
$c_{t-h}/c_t=r_t^{-h}e^{-h(h-1)/N}$ gives
$c_{t-h}/c_t=r_t^{-h}+O(N^{-1})$ uniformly for $m+2\le t<N$.
Substitution into~\eqref{eq:Lj} and~\eqref{eq:poly} gives
\[
\begin{aligned}
 \frac{V(c;t)-V(c;t-1)}{c_t}
 &\le\max_{1\le j\le m}
      \frac{L_j(c;t)-L_j(c;t-1)}{c_t}\\
 &=r_t-\min_{1\le j\le m}\frac{\PP_j(r_t)}{r_t^{m+1}}+O(N^{-1}).
\end{aligned}
\]
Since $r_{t+1}=e^{-2/N}r_t=r_t+O(N^{-1})$, the positive minimum
above makes the last expression less than $r_{t+1}$ uniformly in $t$
for all sufficiently large $N$, proving~\eqref{eq:slow}.

Fix such an $N$ and now vary $q_N$. By continuity, the strict inequalities
\eqref{eq:fast}--\eqref{eq:terminal} persist in a neighborhood of
$e^{-1/N}$. For $m+1\le t<N$ and $0\le p<q\le m$, direct subtraction gives
\[
 L_p(c;t)-L_q(c;t)
 =c_{t-p+1}-c_{t-q+1}
   +\sum_{i=p+1}^q b_i(c_{t-i}-c_{t-i+1}).
\]
By~\eqref{eq:schedulechoice}, this is a polynomial in $q_N$ of degree
$(N-t+q)^2$ with leading coefficient $b_q>0$. The finitely many
differences therefore vanish at only finitely many points. Choosing
a rational $q_N$ in this neighborhood outside this finite set
proves~\eqref{eq:scaledistinct} while preserving
\eqref{eq:fast}--\eqref{eq:terminal}.
\end{proof}

We now fix the calibration parameters used in Algorithm~\ref{alg:supervisor}.
Set
\begin{equation}\label{eq:calibrationparameters}
 \eta=\frac{b_m}{8(b_m+2)},\qquad
 \rho=\frac{\eta}{2(1+\sum_{i=1}^m b_i)}.
\end{equation}
Choose the first pair $(N,q_N)$ in a fixed enumeration of
$\{m+3,m+4,\ldots\}\times(\mathbb Q\cap(0,1))$
satisfying~\eqref{eq:fast}--\eqref{eq:scaledistinct}.
Lemma~\ref{lem:schedule} guarantees this choice.
The calibration length is
\begin{equation}\label{eq:selectedlength}
 L=N+2.
\end{equation}
Both $c$ and $L$ depend only on $b$ and are fixed before the first attempt.

\subsection{Interpolation error bounds}
Put $\alpha=\theta/g$, $\gamma=1/g$. Dividing the system equation by $g$ gives
\begin{equation}\label{eq:normreg}
 \alpha^\top f(\widetilde y_s)-\gamma\widetilde y_{s+1}=-v_s-\gamma w_{s+1}.
\end{equation}
For $A>0$, consider the output bounds
\begin{equation}\label{eq:bands}
 (2a)^{-1}e^{Ac_s}\le\widetilde y_s\le2ae^{Ac_s},
 \qquad 1\le s\le N.
\end{equation}
For $m+2\le t\le N$, let $\mathsf M_t$ denote the coefficient matrix
in~\eqref{eq:calinterpolation},
\begin{equation}\label{eq:Mt}
 \mathsf M_t=\begin{pmatrix}
 f(\widetilde y_{t-1})^\top&-\widetilde y_t\\
 f(\widetilde y_{t-2})^\top&-\widetilde y_{t-1}\\
 \vdots&\vdots\\
 f(\widetilde y_{t-m-1})^\top&-\widetilde y_{t-m}
 \end{pmatrix}.
\end{equation}

For nonsingular $\mathsf M_t$, subtracting~\eqref{eq:normreg} from the
interpolation equations gives
\[
 \mathsf M_t\binom{\widetilde\alpha_t-\alpha}{\widetilde\gamma_t-\gamma}
       =\gamma(w_t,\ldots,w_{t-m})^\top.
\]
Write $\beta=1+\sum_{i=1}^m b_i$.

\begin{lemma}\label{lem:cramer}
There is a constant $K_b\ge1$, depending only on $b$, such that the
following bounds hold with $C_a=K_ba^{2\beta+1}$.
For $m+2\le t\le N$ and $A\ge K_b\log a$, assume~\eqref{eq:bands}
for $s=t-m-1,\ldots,t$. Then
\begin{equation}\label{eq:detlower}
 |\det \mathsf M_t|\ge C_a^{-1}e^{A V(c;t-1)}>0.
\end{equation}
If $\max_{t-m\le s\le t}|w_s|\le W$ with $W\ge1$, then
\begin{equation}\label{eq:currenterr}
 |(\widetilde\alpha_t-\alpha)^\top f(\widetilde y_t)|
 \le C_aW e^{A[V(c;t)-V(c;t-1)]}.
\end{equation}
For $t=N$, also
\begin{equation}\label{eq:coeferr}
 (2a)^{b_i}|\widetilde\alpha_{i,N}-\alpha_i|
 \le C_aW e^{-(b_i-\eta/2)A},\qquad i=1,\ldots,m.
\end{equation}
\end{lemma}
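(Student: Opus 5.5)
We expand $\det\mathsf M_t$ by the Leibniz formula and show that, under~\eqref{eq:bands}, one permutation term strictly dominates the rest. The bounds~\eqref{eq:currenterr} and~\eqref{eq:coeferr} then follow from Cramer's rule.

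\textbf{Determinant expansion.} Row $i$ of $\mathsf M_t$ ($i=1,\ldots,m+1$) is built from $\widetilde y_{t-i}$ and $\widetilde y_{t-i+1}$. Its entries are $f_l(\widetilde y_{t-i})=\widetilde y_{t-i}^{b_l}$ and $-\widetilde y_{t-i+1}$. By~\eqref{eq:bands}, each entry has magnitude $e^{A(\cdot)}$ times a factor in $[(2a)^{-b_1},(2a)^{b_1}]$. The exponents are $b_lc_{t-i}$ and $c_{t-i+1}$ respectively.

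A permutation term chooses, for each column $l\le m$, a distinct row. The last column goes to the remaining row $p+1$, with $p\in\{0,\ldots,m\}$. Its exponent is then $c_{t-p}+\sum_l b_lc_{t-\sigma(l)}$.

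\textbf{Identifying the dominant term.} Since the $c_s$ increase and the $b_l$ decrease, the rearrangement inequality shows that, for fixed $p$, the best assignment is monotone. It matches $b_1,\ldots,b_p$ with rows $1,\ldots,p$ and $b_{p+1},\ldots,b_m$ with rows $p+2,\ldots,m+1$. This gives exponent
\[
\sum_{i\le p}b_ic_{t-i}+c_{t-p}+\sum_{i>p}b_ic_{t-i-1}.
\]
For $p=j\ge1$ this is exactly $L_j(c;t-1)$, and for $p=0$ it is $L_0(c;t-1)$.

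Every non-monotone assignment for the same $p$ loses at least a fixed positive gap $\delta_b>0$ in the exponent. This uses the strict monotonicity of the $b_i$ and the $c_s$ at the fixed schedule.

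By~\eqref{eq:scaledistinct}, the values $L_p(c;t-1)$ are pairwise distinct. We still need the maximum to be attained at some $j\ge1$, i.e.\ that $L_0(c;t-1)$ is not the largest, since $V$ in~\eqref{eq:V} is a maximum over $j\ge1$ only. For this I would check that $L_1(c;t-1)-L_0(c;t-1)=(b_1-1)(c_{t-1}-c_{t-2})>0$.

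Let $\delta_b$ be the minimal gap among all finitely many exponents, which depends only on $b$ (the schedule $c$ is fixed by $b$). The dominant term then has magnitude at least $(2a)^{-\beta}e^{AV(c;t-1)}$, while the other $(m+1)!-1$ terms sum to at most $(m+1)!(2a)^{\beta}e^{A(V(c;t-1)-\delta_b)}$. Choosing $A\ge K_b\log a$ with $K_b$ large makes the latter at most half the former, which proves~\eqref{eq:detlower}. This combinatorial dominance, in particular a clean justification of the rearrangement step when $p$ splits the $b$-sequence, is the main obstacle. I would first try a direct exchange argument on pairs of rows.

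\textbf{Current error~\eqref{eq:currenterr}.} The quantity $(\widetilde\alpha_t-\alpha)^\top f(\widetilde y_t)$ is $\gamma$ times the ratio of two determinants. The denominator is $\det\mathsf M_t$. The numerator is the bordered determinant obtained by appending the row $(f(\widetilde y_t)^\top,0)$ and the column $(w)$; equivalently, it is the cofactor expansion in $w$. Each cofactor is an $(m+1)\times(m+1)$ determinant with rows drawn from $\widetilde y_t,\ldots,\widetilde y_{t-m}$. Bounding it crudely by the maximal term gives at most $C(2a)^{\beta}e^{AV(c;t)}$. Here the block structure of row $t$, namely $f(\widetilde y_t)$ together with the $-\widetilde y_{t}$-type entries, matches the definition of $L_j(c;t)$ with the extra $c_{t-j+1}$. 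Dividing by~\eqref{eq:detlower} and using $|\gamma|\le a$ and $|w_s|\le W$ gives~\eqref{eq:currenterr}.

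\textbf{Coefficient error~\eqref{eq:coeferr}.} By Cramer's rule, $\widetilde\alpha_{i,N}-\alpha_i$ is $\gamma$ times a determinant in which column $i$ is replaced by $w$, divided by $\det\mathsf M_N$. Removing column $i$ drops the weight $b_i$ from the maximal exponent. In the worst case it drops $b_i$ times $c_{N-m-1}>1-\rho$ by~\eqref{eq:terminal}, and the cofactor reshuffling shifts the remaining weights by at most $\rho\beta$ in total. The resulting exponent deficit is therefore at least $b_i-\rho\beta-\rho b_i\ge b_i-\eta/2$ by~\eqref{eq:calibrationparameters}. Absorbing the factors $(2a)^{b_i}$ and $a$ into $C_a=K_ba^{2\beta+1}$ yields~\eqref{eq:coeferr}.
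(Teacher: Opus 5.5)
Your route is the paper's: expand $\det\mathsf M_t$ along the last column, use the pairwise exchange argument and \eqref{eq:scaledistinct} to isolate a unique dominant exponent with a gap depending only on $b$, absorb the other $(m+1)!-1$ terms once $A\ge K_b\log a$, and finish with Cramer's rule. Your bordered determinant is, up to sign, $\gamma\sum_i f_i(\widetilde y_t)\det\mathsf M_t^{(i)}$, where $\mathsf M_t^{(i)}$ is $\mathsf M_t$ with column $i$ replaced by the noise vector; this is the paper's componentwise bound summed over $i$. Two steps, however, fail as written.

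\textbf{The detour through $L_0$.} The step you flag as still needed is false, and your identity for it is wrong. From the definitions,
\[
 L_1(c;t-1)-L_0(c;t-1)=b_1(c_{t-1}-c_{t-2})-(c_t-c_{t-1}).
\]
Consider the schedules produced in the proof of Lemma~\ref{lem:schedule}, with $q_N$ near $e^{-1/N}$ and $N$ large. There the ratios $c_s/c_{s-1}$ at the first interpolation steps are close to $e^2$. With local ratio $r$ the difference above is about $c_{t-2}(r-1)(b_1-r)<0$, because $b_1<3$. More generally, $L_p(c;t-1)-L_0(c;t-1)$ is then approximately at most $(b_1+1-r)c_{t-1}<0$ for every $p\ge1$. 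So $L_0(c;t-1)$ can be the strict maximum. In that case your bound of the non-dominant terms by $e^{A(V(c;t-1)-\delta_b)}$ fails, since the term with exponent $V(c;t-1)$ is among them. You do not need the maximum at $j\ge1$ at all. Compare every term with $\Lambda_t=\max_{0\le p\le m}L_p(c;t-1)$ to get $|\det\mathsf M_t|\ge\frac12(2a)^{-\beta}e^{A\Lambda_t}$, then use $\Lambda_t\ge V(c;t-1)$. This is exactly how the paper concludes \eqref{eq:detlower}.

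\textbf{The exponent count for \eqref{eq:coeferr}.} By \eqref{eq:calibrationparameters}, $\rho\beta=\eta/2$ exactly. Hence $b_i-\rho\beta-\rho b_i=b_i-\eta/2-\rho b_i<b_i-\eta/2$, so your final inequality fails and your bookkeeping only gives the weaker rate $e^{-(b_i-\eta/2-\rho b_i)A}$. The loss comes from charging the reshuffling $\rho\beta$, whereas after deleting column $i$ the remaining weights total only $\beta-b_i$. The clean version, which is the paper's, runs as follows. Every term of $\det\mathsf M_N^{(i)}$ has exponent at most $\beta-b_i$, since all $c_s\le1$. On the other hand, $V(c;N-1)$ is a $\beta$-weighted sum of $c_s$ with $s\ge N-m-1$, hence at least $(1-\rho)\beta$ by \eqref{eq:terminal}. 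The deficit is therefore exactly $b_i-\rho\beta=b_i-\eta/2$.

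\textbf{The bound \eqref{eq:currenterr}.} You should state explicitly why the numerator exponent is at most $V(c;t)$, which is a maximum over $j\ge1$ only. The appended row $(f(\widetilde y_t)^\top,0)$ has a zero in the last column, so it must supply a power factor. The last-column factor then comes from a row labelled $t-j$ with $1\le j\le m$, contributing $c_{t-j+1}$. Also, the cofactors may still contain the row built from $\widetilde y_{t-m-1}$, so their rows are not drawn from $\widetilde y_t,\ldots,\widetilde y_{t-m}$ only. You need the observation that the labels present, in decreasing order, are bounded by $t,t-1,\ldots,t-m$. Replacing them by these bounds cannot decrease the exponent, because $c$ is increasing and all weights are positive. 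Rearrangement then gives the bound $L_j(c;t)\le V(c;t)$.
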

\begin{proof}
Put $q=2a$ and expand $\det \mathsf M_t$ along its last column.
Selecting $-\widetilde y_{t-p}$ in row $p+1$, with $0\le p\le m$,
gives products of magnitude
\[
 \widetilde y_{t-p}\prod_{i=1}^m\widetilde y_{s_i}^{b_i},
 \qquad \lambda=c_{t-p}+\sum_{i=1}^m b_ic_{s_i},
\]
where $(s_1,\ldots,s_m)$ permutes
$\{t-m-1,\ldots,t-1\}\setminus\{t-p-1\}$.
By~\eqref{eq:bands}, each product lies between
$q^{-\beta}e^{A\lambda}$ and $q^\beta e^{A\lambda}$.
Pairing $b_1>\cdots>b_m$ with the remaining $c_s$ in decreasing order
uniquely maximizes $\lambda$, since exchanging $s_i$ and $s_j$
when $i<j$ and $s_i<s_j$ increases it by
$(b_i-b_j)(c_{s_j}-c_{s_i})>0$.
This maximum is $L_p(c;t-1)$.
The distinctness in Lemma~\ref{lem:schedule} therefore gives a unique
largest exponent in $\det \mathsf M_t$. Since $m+2\le t\le N$ ranges over a
finite set, its gap from all other exponents has a positive lower bound
$\delta$ depending only on $b$.
The ratio of the sum of the absolute values of all other terms to the
absolute value of the leading term is at most
\[
 ((m+1)!-1)(2a)^{2\beta}e^{-\delta A}\le\tfrac12
\]
when $A\ge K_b\log a$, with $K_b$ chosen sufficiently large.
Thus
\[
 |\det \mathsf M_t|\ge\tfrac12q^{-\beta}e^{A\max_{0\le p\le m}L_p(c;t-1)}.
\]
This proves~\eqref{eq:detlower}, since
$\max_{0\le p\le m}L_p(c;t-1)\ge V(c;t-1)$.

For $i=1,\ldots,m$, let $\mathsf M_t^{(i)}$ be $\mathsf M_t$ with column $i$
replaced by $(w_t,\ldots,w_{t-m})^\top$. Cramer's rule gives
\[
 \widetilde\alpha_{i,t}-\alpha_i
 =\gamma\frac{\det \mathsf M_t^{(i)}}{\det \mathsf M_t}.
\]
Label the row $(f(\widetilde y_s)^\top,-\widetilde y_{s+1})$ by $s$.
In each term of $\widetilde y_t^{b_i}\det \mathsf M_t^{(i)}$, omit the noise
factor and assign $\widetilde y_t^{b_i}$ the label $t$. The resulting $m+1$
row labels, in decreasing order, are bounded by $t,t-1,\ldots,t-m$.
Replacing them by these bounds cannot decrease the exponent, since the $c_s$
increase and all $b_j>0$. Label $t$ supplies a power factor,
so the last column has label $t-j$ for some $1\le j\le m$ and contributes
$c_{t-j+1}$ to the exponent.
Rearrangement therefore bounds the exponent by $L_j(c;t)\le V(c;t)$.
Since $\det \mathsf M_t^{(i)}$ has at most $(m+1)!$ terms,
\[
 \widetilde y_t^{b_i}|\det \mathsf M_t^{(i)}|
 \le (m+1)!\,Wq^\beta e^{AV(c;t)}.
\]
Summing over $i$ and using $\gamma\le a$ and
$|\det \mathsf M_t|\ge\tfrac12q^{-\beta}e^{AV(c;t-1)}$
prove~\eqref{eq:currenterr} provided
$K_b\ge2^{2\beta+1}(m+2)!$.

At $t=N$, condition~\eqref{eq:terminal} gives
$1-\rho\le c_s\le1$ for $N-m-1\le s\le N$.
The powers in each term of $\det \mathsf M_N^{(i)}$ sum to $\beta-b_i$.
The same Cramer formula and $\rho\beta=\eta/2$ give
\[
 q^{b_i}|\widetilde\alpha_{i,N}-\alpha_i|
 \le\frac{a(m+1)!\,Wq^\beta e^{A(\beta-b_i)}}
              {\tfrac12q^{-\beta}e^{A(1-\rho)\beta}}
 \le C_aW e^{-A(b_i-\eta/2)}.
\]
This proves~\eqref{eq:coeferr} with the same $C_a$.
\end{proof}

\subsection{Calibration bounds}\label{sec:calibrationconstruction}
Under the hypotheses of Lemma~\ref{lem:cramer},
\eqref{eq:currenterr} and~\eqref{eq:slow} give, as $A\to\infty$,
\[
 \frac{|(\widetilde\alpha_t-\alpha)^\top f(\widetilde y_t)|}{e^{Ac_{t+1}}}
 \le C_aW e^{-A[c_{t+1}-V(c;t)+V(c;t-1)]}\longrightarrow0,
 \qquad m+2\le t<N.
\]
We use this exponent gap to establish~\eqref{eq:bands} inductively
and then apply the two cancellation inputs.

\begin{proposition}\label{prop:calibration}
Assume~\eqref{eq:criterion}, and take $L$ from~\eqref{eq:selectedlength}.
There is a constant $C_b>0$, depending only on $b$, such that for every
$a\ge2$, $W\ge1$, $\varepsilon>0$, finite initial state $\widetilde y_0$,
and parameters with $\norm\theta_\infty\le a$ and $a^{-1}\le g\le a$,
the calibration law~\eqref{eq:calcontrol} with
\begin{equation}\label{eq:calibrationbound}
 A\ge C_b\log\left[
 (\mathrm e+a+W+\varepsilon^{-1})(\mathrm e+|\widetilde y_0|)\right]
\end{equation}
yields, if $\max_{1\le s\le L}|w_s|\le W$,
\begin{equation}\label{eq:seed}
 |\widetilde y_L|\le W+1,\qquad
 \norm{\widetilde\alpha_N-\alpha}\le\varepsilon.
\end{equation}
The calibration inputs remain finite on every finite output history.
\end{proposition}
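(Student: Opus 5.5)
We argue by induction on $s$: for $A$ as in~\eqref{eq:calibrationbound}, the band condition~\eqref{eq:bands} holds for $s=1,\ldots,N$. Lemma~\ref{lem:cramer} then supplies the coefficient accuracy, and the two cancellation steps give the terminal bound.

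\textbf{Pulse phase, $1\le s\le m+2$.} The inputs are pure pulses, so
\[
\widetilde y_{s}=\theta^\top f(\widetilde y_{s-1})+g e^{Ac_{s}}+w_{s}.
\]
For $s=1$ the drift is at most $ma(\mathrm e+|\widetilde y_0|)^{b_1}$. This is $o(e^{Ac_1})$ once $A c_1\ge C\log(\mathrm e+|\widetilde y_0|)$, which explains the factor $(\mathrm e+|\widetilde y_0|)$ in~\eqref{eq:calibrationbound}; note that $c_1>0$ depends only on $b$. For $2\le s\le m+2$ the induction hypothesis gives a drift of order $ma(2a)^{b_1}e^{Ab_1c_{s-1}}$, and~\eqref{eq:fast} gives $b_1c_{s-1}<c_s$. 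The noise is at most $W$. Hence $g e^{Ac_s}$ dominates, and with $g\in[a^{-1},a]$ we get~\eqref{eq:bands} once the error terms are at most $\tfrac12 a^{-1}e^{Ac_s}$. Because the exponent gaps are fixed positive numbers depending only on $b$, this holds whenever $A\ge C_b\log(\mathrm e+a+W)$.

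\textbf{Interpolation phase, $m+2\le t<N$.} Assume~\eqref{eq:bands} holds for $s\le t$, and take $A\ge K_b\log a$. Lemma~\ref{lem:cramer} shows that $\mathsf M_t$ is nonsingular, so the fallback $(0,1)$ is never used. Substituting $v_t$ into~\eqref{eq:normreg} gives
\[
\widetilde y_{t+1}=g\bigl[(\alpha-\widetilde\alpha_t)^\top f(\widetilde y_t)+e^{Ac_{t+1}}\bigr]+w_{t+1}.
\]
By~\eqref{eq:currenterr} and~\eqref{eq:slow}, the bracketed error is at most $C_aWe^{A[V(c;t)-V(c;t-1)]}$, and the exponent gap $c_{t+1}-V(c;t)+V(c;t-1)$ is bounded below by a positive constant depending only on $b$. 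Taking $A\ge C_b\log(\mathrm e+a+W)$ with $C_b$ large therefore makes this error, and the noise $W$, at most $\tfrac14 e^{Ac_{t+1}}$. This yields~\eqref{eq:bands} at $s=t+1$ and completes the induction through $s=N$.

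\textbf{Accuracy and cancellation.} By~\eqref{eq:coeferr}, $|\widetilde\alpha_{i,N}-\alpha_i|\le C_aW e^{-(b_i-\eta/2)A}$. Since $b_i-\eta/2\ge b_m/2>0$, this is at most $\varepsilon/m$ once $A\ge C_b\log(\mathrm e+a+W+\varepsilon^{-1})$.

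The two cancellation steps are the main obstacle. At $j=N$ we have
\[
\widetilde y_{N+1}=g(\alpha-\widetilde\alpha_N)^\top f(\widetilde y_N)+w_{N+1},
\]
and the upper band gives $|\widetilde y_N^{b_i}|\le(2a)^{b_i}e^{Ab_i}$. Applying~\eqref{eq:coeferr} again,
\[
|\widetilde y_{N+1}|\le a\,m\,C_aWe^{A\eta/2}+W,
\]
which is a \emph{modest} power of $e^{A}$, though not bounded. At $j=N+1$,
\[
|\widetilde y_{N+2}|\le a\sum_i|\widetilde\alpha_{i,N}-\alpha_i|\,|\widetilde y_{N+1}|^{b_i}+W.
\]
Each summand has size at most
\[
C_a'W^{1+b_i}e^{-A(b_i-\eta/2)}e^{Ab_i\eta/2}=C_a'W^{1+b_i}e^{-Ab_i(1-\eta/2)+A\eta/2}.
\]
The choice $\eta=b_m/(8(b_m+2))$ makes the exponent $-Ab_i(1-\eta/2)+A\eta/2$ negative, uniformly in $i$, since $\eta/2<b_m(1-\eta/2)$. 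Hence for large $C_b$ this term is at most $1$, giving $|\widetilde y_L|\le W+1$. The delicate point is tracking these polynomial-in-$(a,W)$ prefactors, including $C_a=K_ba^{2\beta+1}$, and absorbing them into the logarithmic lower bound on $A$.

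Finiteness of the inputs on every history is immediate: the pulses are finite, and the interpolation maps are either Cramer solutions or the fixed fallback $(0,1)$.
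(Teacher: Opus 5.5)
Your proof is correct and follows essentially the paper's route: inductive verification of the bands~\eqref{eq:bands} by pulse domination using~\eqref{eq:fast} and~\eqref{eq:slow}, then~\eqref{eq:coeferr} for the accuracy and the two cancellation steps. The paper only packages the largeness requirements differently, into $R=e^{\mu A}$ and the intermediate bound $|\widetilde y_{N+1}|\le e^{2\eta A}$, rather than tracking the prefactors explicitly. One small constant slip: in the interpolation step the noise must be at most $\tfrac14 a^{-1}e^{Ac_{t+1}}$, not $\tfrac14 e^{Ac_{t+1}}$, to give the lower band; the same logarithmic lower bound on $A$ provides this.
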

\begin{proof}
By~\eqref{eq:fast} and~\eqref{eq:slow}, choose
$0<\mu\le\min\{c_1,\eta\}$, depending only on $b$, such that
\[
 \begin{aligned}
 b_1c_j+\mu&<c_{j+1},&&1\le j\le m+1,\\
 V(c;j)-V(c;j-1)+\mu&<c_{j+1},&&m+2\le j<N.
 \end{aligned}
\]
Put $R=e^{\mu A}$.
With $C_a$ as in Lemma~\ref{lem:cramer}, choose $C_b$ large enough that
\eqref{eq:calibrationbound} implies $A\ge K_b\log a$ and
\[
 R\ge4ma^2C_aW,\qquad
 R\ge2a\left(a\|f(\widetilde y_0)\|_1+W\right),\qquad
 R\ge\frac{\sqrt m\,C_aW}{\varepsilon}.
\]
Taking logarithms of the three bounds and using $C_a=K_ba^{2\beta+1}$
and $1+\|f(y)\|_1\le(m+1)(\mathrm e+|y|)^{b_1}$ shows that $C_b$
can be chosen to depend only on $b$.
For the first pulse, $\mu\le c_1$ gives
\[
 a\sum_{i=1}^m|f_i(\widetilde y_0)|+W\le\frac1{2a}e^{Ac_1}.
\]
Hence~\eqref{eq:bands} holds for $s=1$.
If~\eqref{eq:bands} holds through step $j$, then for $1\le j\le m+1$,
\[
 \frac{|\theta^\top f(\widetilde y_j)+w_{j+1}|}{e^{Ac_{j+1}}}
 \le\frac{am(2a)^{b_1}+W}{R}\le\frac1{2a}.
\]
For $m+2\le j<N$, the samples $\widetilde y_{j-m-1},\ldots,\widetilde y_j$
satisfy~\eqref{eq:bands} by the induction hypothesis.
Applying~\eqref{eq:currenterr} at $t=j$ gives
\[
 \frac{|(\theta-g\widetilde\alpha_j)^\top
 f(\widetilde y_j)+w_{j+1}|}{e^{Ac_{j+1}}}
 \le\frac{(aC_a+1)W}{R}\le\frac1{2a}.
\]
These bounds use the choice of $\mu$ and $C_a\ge(2a)^{b_1}$.
Together with $a^{-1}\le g\le a$, these bounds give~\eqref{eq:bands}
at $s=j+1$ in both cases, completing the induction for $1\le s\le N$.

For the final two inputs, equation~\eqref{eq:calcontrol} gives
\[
 \widetilde y_{j+1}
 =g(\alpha-\widetilde\alpha_N)^\top f(\widetilde y_j)+w_{j+1},
 \qquad j=N,N+1.
\]
Using~\eqref{eq:bands} with $c_N=1$ and~\eqref{eq:coeferr},
\[
 |\widetilde y_{N+1}|
 \le a\sum_{i=1}^m|\widetilde\alpha_{i,N}-\alpha_i|(2ae^A)^{b_i}+W
 \le(amC_a+1)W e^{\eta A/2}\le e^{2\eta A},
\]
because $(amC_a+1)W\le R\le e^{\eta A}$.
For the second cancellation input,~\eqref{eq:calibrationparameters} gives
$(1-2\eta)b_m-\eta/2>\eta\ge\mu$. Therefore
\[
 \begin{aligned}
 |\widetilde y_{N+2}|
 &\le a\sum_{i=1}^m|\widetilde\alpha_{i,N}-\alpha_i|
                   |\widetilde y_{N+1}|^{b_i}+W\\
 &\le amC_aW e^{-[(1-2\eta)b_m-\eta/2]A}+W
 \le\frac{amC_aW}{R}+W\le W+1.
 \end{aligned}
\]

Since $\mu\le\eta<b_m-\eta/2$, equation~\eqref{eq:coeferr} bounds every
coordinate error by $C_aW/R$. Consequently,
\[
 \norm{\widetilde\alpha_N-\alpha}
 \le\frac{\sqrt m\,C_aW}{R}\le\varepsilon.
\]

With $(0,1)$ used when an interpolation matrix is singular, all
$L=N+2$ calibration inputs are finite on every finite output history.
\end{proof}
\section{Local analysis of the feedback}\label{sec:local}
Fix $a_0\ge2$ such that
$\norm\theta_\infty\le a_0$ and $a_0^{-1}\le|g|,\sigma\le a_0$.
Fix an attempt $k\ge a_0$ with $s_k=\sgn g$ and $R_k<\infty$.
The local probability bounds are conditional on $\F_{R_k+L}$.
Continue the feedback, matrix updates, and parameter update in
Algorithm~\ref{alg:supervisor} for all $n\ge0$, with $H_k=2^k$,
ignoring the stopping test. In particular, continue $G_{k,n}$ and
$z_{k,n}$ and define $J_{k,n}$ by~\eqref{eq:tests}.
This recurrence agrees with the actual feedback until the first failed
performance test.
As shown in the proof of Lemma~\ref{lem:schedule},
\eqref{eq:criterion} implies $b_1<3$, so
\begin{equation}\label{eq:upper}
 \norm{f(y)}^2\le m\max\{1,|y|^6\},\quad y\in\R.
\end{equation}
The ratio error and drift satisfy
\begin{equation}\label{eq:ratioerror}
 \delta_{k,n}=\alpha_k-\widehat\alpha_{k,n},\qquad
 e_{k,n}=s_kg\delta_{k,n}^\top f(x_{k,n}),\qquad x_{k,n+1}=e_{k,n}+w_{k,n+1}.
\end{equation}
Write $E_{k,n}=\sum_{j=0}^{n-1}e_{k,j}^2$ for its cumulative squared drift.
For the information matrix $F_{k,n}$ in Algorithm~\ref{alg:supervisor}, define
\begin{align}
 U_{k,n}&=\frac{s_kg}{k}\delta_{k,n}^\top F_{k,n}\delta_{k,n},\qquad
 q_{k,n}=f(x_{k,n})^\top F_{k,n}^{-1}f(x_{k,n}),\label{eq:ratiopotential}\\
 D_{k,n}^F&=\log\frac{\det F_{k,n}}{H_k^m},\qquad
 D_{k,n}^G=\log\frac{\det G_{k,n}}{H_k^{m+1}},\label{eq:localdet}\\
 \tau_k&=\inf\{n\ge0: |e_{k,n}|>1/2\ \text{or}\ q_{k,n}>(2k^2)^{-1}\}.
 \label{eq:localstop}
\end{align}
Set $M_{k,n}=\sum_{j=0}^{n-1}\chi_{k,j}w_{k,j+1}$.
The decomposition of $z_{k,n}$ and the Cauchy--Schwarz inequality, using
$G_{k,n}\succeq\sum_{j=0}^{n-1}\chi_{k,j}\chi_{k,j}^\top$, give, for $n\ge0$,
\[
 \begin{gathered}
 z_{k,n}=\sum_{j=0}^{n-1}\chi_{k,j}e_{k,j}+M_{k,n},\qquad
 \norm{z_{k,n}-M_{k,n}}_{G_{k,n}^{-1}}^2\le E_{k,n},\\
 J_{k,n}\le2E_{k,n}+2\norm{M_{k,n}}_{G_{k,n}^{-1}}^2.
 \end{gathered}
\]
To bound $E_{k,n}$ for all $n$, we estimate $U_{k,n}$ and $E_{k,n}$ up to
$\tau_k$ and use observations $x_{k,s}\in[0,1]$ to exclude a finite
$\tau_k$ on an event of positive probability.
Related Gaussian martingale tools for stochastic LS and self-normalized processes
appear in~\cite{LaiWei1982,Abbasi2011}.
\begin{lemma}\label{lem:energy}
Fix an attempt $k\ge a_0$ with $s_k=\sgn g$, and let $0<\epsilon<1/2$.
With probability at least $1-\epsilon$,
simultaneously for all $0\le n\le\tau_k$,
\begin{equation}\label{eq:energy}
 U_{k,n}+\tfrac14E_{k,n}
 \le U_{k,0}+2k^2\sigma^2D_{k,n}^F
       +16\sigma^2\log(1/\epsilon).
\end{equation}
With probability at least $1-\epsilon$, simultaneously for all $n\ge0$,
\begin{equation}\label{eq:selfnorm}
 \norm{M_{k,n}}_{G_{k,n}^{-1}}^2
 \le\sigma^2\{D_{k,n}^G+2\log(1/\epsilon)\}.
\end{equation}
\end{lemma}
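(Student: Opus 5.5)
The plan is to treat the two inequalities separately. \eqref{eq:energy} comes from a one-step Lyapunov recursion for $U_{k,n}$ plus an exponential supermartingale and Ville's maximal inequality. \eqref{eq:selfnorm} is the standard method-of-mixtures bound for self-normalized vector martingales, as in~\cite{Abbasi2011}. Throughout, write $c=s_kg\in[a_0^{-1},a_0]$, $f_n=f(x_{k,n})$, and $q_n'=f_n^\top F_{k,n+1}^{-1}f_n$. Since $F_{k,n+1}=F_{k,n}+f_nf_n^\top$, the rank-one update gives
\[
 q_n'=\frac{q_{k,n}}{1+q_{k,n}}\le q_{k,n},\qquad
 q_n'=1-\frac{\det F_{k,n}}{\det F_{k,n+1}}\le D^F_{k,n+1}-D^F_{k,n}.
\]
Both $q_n'$ and $e_{k,n}$ are $\F_{k,n}$-measurable.

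\emph{Step 1: the one-step recursion.} By~\eqref{eq:ratioupdate}, $\delta_{k,n+1}=\delta_{k,n}-kF_{k,n+1}^{-1}f_nx_{k,n+1}$. Expanding $\delta_{k,n+1}^\top F_{k,n+1}\delta_{k,n+1}$ and using $(\delta_{k,n}^\top f_n)^2=e_{k,n}^2/c^2$, one gets
\[
 U_{k,n+1}-U_{k,n}=\frac{e_{k,n}^2}{ck}-2e_{k,n}x_{k,n+1}+ckq_n'x_{k,n+1}^2 .
\]
Since $k\ge a_0$, we have $ck\ge1$, so the first term is at most $e_{k,n}^2$. Substituting $x_{k,n+1}=e_{k,n}+w_{k,n+1}$ gives
\[
 U_{k,n+1}-U_{k,n}\le-(1-ckq_n')e_{k,n}^2-2(1-ckq_n')e_{k,n}w_{k,n+1}+ckq_n'w_{k,n+1}^2 .
\]
For $n<\tau_k$ we have $q_{k,n}\le(2k^2)^{-1}$, so $ckq_n'\le a_0/(2k)\le\tfrac12$ and $ckq_n'\le k^2q_n'=:p_n\le\tfrac12$. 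Writing $h_n=2(1-ckq_n')e_{k,n}$, so that $|h_n|\le2|e_{k,n}|$, this yields
\[
 U_{k,n+1}-U_{k,n}\le-\tfrac12e_{k,n}^2-h_nw_{k,n+1}+p_nw_{k,n+1}^2 .
\]
All coefficients here are predictable.

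\emph{Step 2: exponential supermartingale.} Take $\lambda=1/(16\sigma^2)$, or a nearby constant tuned to reproduce the stated constants, and set
\[
 S_n=\exp\Bigl\{\lambda\sum_{j<n\wedge\tau_k}\bigl[-h_jw_{k,j+1}+p_jw_{k,j+1}^2-\tfrac18\lambda\sigma^2\cdot 32\,e_{k,j}^2\,(\cdots)-2p_j\sigma^2\bigr]\Bigr\}.
\]
Concretely, the compensator consists of two pieces. The linear Gaussian part requires $\tfrac12\lambda\sigma^2h_j^2\le2\lambda\sigma^2e_{k,j}^2$. For the chi-square part, $\lambda p_j\sigma^2\le1/32$ gives
\[
 \E\bigl[e^{\lambda p_jw^2}\bigr]=(1-2\lambda p_j\sigma^2)^{-1/2}\le e^{2\lambda p_j\sigma^2}.
\]
Because $h_jw$ and $p_jw^2$ involve the same Gaussian variable, I would either split them with Cauchy--Schwarz in the exponent or compute the joint Gaussian integral directly. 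Either way the compensated process is a nonnegative supermartingale with $S_0=1$. Ville's inequality then gives $\sup_n S_n\le1/\epsilon$ with probability at least $1-\epsilon$.

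On that event, sum the recursion up to $n\le\tau_k$ and use $\sum_jp_j\le k^2D^F_{k,n}$. The drift terms $-\tfrac12e^2$ absorb the quadratic-variation term $2\lambda\sigma^2\cdot e^2/\lambda=2\sigma^2\lambda e^2\cdot(1/\lambda)$, which after dividing by $\lambda$ leaves at least $\tfrac14e_{k,j}^2$. The chi-square part contributes $2\sigma^2k^2D^F_{k,n}$, and Ville contributes $\lambda^{-1}\log(1/\epsilon)=16\sigma^2\log(1/\epsilon)$. This is exactly~\eqref{eq:energy}. The bound extends to $n=\tau_k$ itself because the increment from $\tau_k-1$ to $\tau_k$ is still computed under $q_{k,\tau_k-1}\le(2k^2)^{-1}$.

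\emph{Step 3: the self-normalized bound~\eqref{eq:selfnorm}.} For fixed $\xi\in\R^{m+1}$, the process
\[
 \exp\bigl\{\xi^\top M_{k,n}/\sigma^2-\|\xi\|^2_{G_{k,n}-H_kI}/(2\sigma^2)\bigr\}
\]
is a nonnegative martingale, since $\chi_{k,n}$ is $\F_{k,n}$-measurable. Integrate $\xi$ against the prior $N(0,\sigma^2H_k^{-1}I)$. The resulting mixture equals
\[
 (\det H_kI/\det G_{k,n})^{1/2}\exp\bigl\{\|M_{k,n}\|^2_{G_{k,n}^{-1}}/(2\sigma^2)\bigr\},
\]
which is still a supermartingale starting at one. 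Ville's inequality gives, uniformly in $n$, $\|M_{k,n}\|^2_{G_{k,n}^{-1}}\le\sigma^2\{D^G_{k,n}+2\log(1/\epsilon)\}$ with probability at least $1-\epsilon$.

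The main obstacle is Step 2. The linear noise term and the squared noise term share the same $w_{k,j+1}$, and the constants must close: the drift $-\tfrac12e^2$ has to dominate the Gaussian quadratic variation while leaving exactly $\tfrac14E_{k,n}$, and the chi-square moment bound must stay at factor $2$. I would first try computing $\E\exp\{\lambda(-hw+pw^2)\}$ exactly as a Gaussian integral, which gives $(1-2\lambda p\sigma^2)^{-1/2}\exp\{\lambda^2h^2\sigma^2/(2(1-2\lambda p\sigma^2))\}$. Then I would choose $\lambda$ so that both factors are bounded as required, using $p\le\tfrac12$.
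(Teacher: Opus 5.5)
Your proposal is correct and follows the paper's proof: the same one-step identity for $U_{k,n}$ (your $ck$ is the paper's $\lambda$), the nonnegative supermartingale $\exp\{(U_{k,n}+\frac14E_{k,n}-U_{k,0})/(16\sigma^2)-\frac{k^2}{8}D^F_{k,n}\}$ stopped at $\tau_k$, closed by exactly the joint Gaussian integral you propose, and the maximal inequality. The paper bounds that integral by $e^{u^2+2v}$ for $0\le v\le 1/4$. With $\lambda=1/(16\sigma^2)$ and $p_j\le 1/2$, the factor $1-2\lambda p_j\sigma^2\ge 15/16$ keeps the linear compensator below $\frac14 e_{k,j}^2$, so no tuning of $\lambda$ is needed; replace your garbled display for $S_n$ by this clean process. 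For \eqref{eq:selfnorm}, the paper simply cites \cite[Theorem~1]{Abbasi2011}, whose method-of-mixtures proof you reproduce correctly.
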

\begin{proof}
Put $h_{k,n}=q_{k,n}/(1+q_{k,n})$ for $n\ge0$ and
$\lambda=k s_kg\in[1,k^2]$.
By~\eqref{eq:ratioupdate} and the Sherman--Morrison formula,
\[
 \begin{gathered}
 \delta_{k,n+1}=\delta_{k,n}
   -kF_{k,n+1}^{-1}f(x_{k,n})(e_{k,n}+w_{k,n+1}),\\
 f(x_{k,n})^\top F_{k,n+1}^{-1}f(x_{k,n})
   =\frac{q_{k,n}}{1+q_{k,n}}=h_{k,n}.
 \end{gathered}
\]
Substituting the error update into~\eqref{eq:ratiopotential} gives
\[
 U_{k,n+1}-U_{k,n}
 =(\lambda^{-1}-2+\lambda h_{k,n})e_{k,n}^2
   -2(1-\lambda h_{k,n})e_{k,n}w_{k,n+1}+\lambda h_{k,n}w_{k,n+1}^2.
\]
Set $\beta_{k,n}=2(1-\lambda h_{k,n})$.
For $n<\tau_k$, one has $\lambda h_{k,n}\le k^2q_{k,n}\le1/2$,
$\beta_{k,n}\in[1,2]$, and
$\lambda^{-1}-2+\lambda h_{k,n}\le-1/2$. Hence
\[
 U_{k,n+1}-U_{k,n}
 \le-\tfrac12e_{k,n}^2-\beta_{k,n}e_{k,n}w_{k,n+1}+k^2h_{k,n}w_{k,n+1}^2.
\]
For $Z\sim N(0,1)$, $u\in\R$, and $0\le v\le1/4$, Gaussian integration gives
\[
 \E e^{uZ+vZ^2}
 =(1-2v)^{-1/2}\exp\!\left\{\frac{u^2}{2(1-2v)}\right\}
 \le e^{u^2+2v},
\]
using $1-2v\ge1/2$ and $-\log(1-2v)\le4v$.
For $n<\tau_k$, apply this conditionally with
$u=-\beta_{k,n}e_{k,n}/(16\sigma)$ and $v=k^2h_{k,n}/16$.
These coefficients are $\F_{k,n}$-measurable and satisfy
$u^2\le e_{k,n}^2/(64\sigma^2)$ and $v\le1/32$.
The matrix determinant lemma gives
$D_{k,n+1}^F-D_{k,n}^F=\log(1+q_{k,n})\ge h_{k,n}$.
The increment bound therefore shows that
\[
 \exp\left\{\frac{U_{k,n}+\tfrac14E_{k,n}-U_{k,0}}{16\sigma^2}
             -\frac{k^2}{8}D_{k,n}^F\right\},\qquad n\ge0,
\]
stopped at $\tau_k$, is a nonnegative supermartingale starting at one.
Its maximal inequality at level $1/\epsilon$ proves~\eqref{eq:energy}.

For~\eqref{eq:selfnorm}, apply~\cite[Theorem~1]{Abbasi2011}
with initial matrix $H_kI_{m+1}$, regressors $\chi_{k,j}$,
and noise $w_{k,j+1}$.
The regressors are $\F_{k,j}$-measurable and
$w_{k,j+1}\mid\F_{k,j}\sim N(0,\sigma^2)$.
\end{proof}

For the power functions, take $J=[0,1]$. Their Gram matrix is
\begin{equation}\label{eq:localgram}
 \Gamma_J=\int_J f(x)f(x)^\top\dd x
 =\left(\frac{1}{b_i+b_j+1}\right)_{i,j=1}^m\succ0.
\end{equation}
Indeed, if $v\in\R^m$ satisfies $v^\top\Gamma_Jv=0$, then
$\sum_{i=1}^m v_i x^{b_i}=0$ on $(0,1)$ by continuity. Dividing by $x^{b_m}$
and letting $x\downarrow0$ gives $v_m=0$. Repeating with the remaining
exponents gives $v=0$.
Set
\[
 \kappa=\frac{\exp(-9a_0^2/8)}{\sqrt{2\pi}\,a_0}
          \lambda_{\min}(\Gamma_J),
\]
so that $0<\kappa<1$.

\begin{lemma}\label{lem:local}
Let $\norm\theta_\infty\le a_0$ and $a_0^{-1}\le|g|,\sigma\le a_0$.
There is $k_0=k_0(b,a_0)\ge a_0$ such that, for every $k\ge k_0$
with $s_k=\sgn g$, on $\{R_k<\infty\}$ and for initializations satisfying
\begin{equation}\label{eq:localinit}
 |x_{k,0}|\le k+1,\qquad
 \norm{\widehat\alpha_{k,0}-\alpha_k}\le k^{-6},
\end{equation}
the continued feedback satisfies, almost surely,
\begin{equation}\label{eq:localmonitor}
 \P\!\left(J_{k,n}\le H_k\log(n+3)\ \text{for all }n\ge0
       \,\middle|\,\F_{R_k+L}\right)\ge\tfrac12.
\end{equation}
\end{lemma}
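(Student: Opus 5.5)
The plan is a stopping-time argument. I would show that, for $k$ large, the event $\{\tau_k=\infty\}$ together with the two bounds of Lemma~\ref{lem:energy} holds with conditional probability at least $1/2$. On that event, the decomposition $J_{k,n}\le2E_{k,n}+2\norm{M_{k,n}}_{G_{k,n}^{-1}}^2$ reduces the performance test to logarithmic bounds on $D^F_{k,n}$ and $D^G_{k,n}$, with the large prefactor $H_k=2^k$ absorbing all $k$-polynomial constants. All probabilities are conditional on $\F_{R_k+L}$, where $x_{k,0}$ and $\widehat\alpha_{k,0}$ are fixed and satisfy~\eqref{eq:localinit}. I would intersect the following events, each of conditional probability at least $7/8$ or so:
\begin{itemize}
\item $\Omega_1$: \eqref{eq:energy} holds for all $n\le\tau_k$, with $\epsilon=1/16$.
\item $\Omega_2$: \eqref{eq:selfnorm} holds for all $n$, with $\epsilon=1/16$.
\item $\Omega_3$: $|w_{k,n}|\le a_0\sqrt{2\log((n+2)^2k^2)}+1$ for all $n\ge1$. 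This follows from a Gaussian union bound, whose tail sum is $O(k^{-2})$.
\item $\Omega_4$: a uniform lower bound on the smallest eigenvalue of the information matrix, $\lambda_{\min}(F_{k,n})\ge2^k+\tfrac12\kappa n-C\sqrt{n\log(n+2)}$ for $n\le\tau_k$, with a constant $C=C(b,a_0)$.
\end{itemize}

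For $\Omega_4$ I would use the following fact. For $n<\tau_k$ one has $|e_{k,n}|\le1/2$, and $\sigma\in[a_0^{-1},a_0]$. Hence the conditional density of $x_{k,n+1}=e_{k,n}+w_{k,n+1}$ on $J=[0,1]$ is at least $e^{-9a_0^2/8}/(\sqrt{2\pi}a_0)$. Therefore, for every unit vector $v$,
\[
\E\!\left[(v^\top f(x_{k,n+1}))^2\ind{x_{k,n+1}\in J}\,\middle|\,\F_{k,n}\right]\ge\kappa .
\]
The increments $(v^\top f(x))^2\ind{x\in J}$ lie in $[0,m]$, so a bounded-increment martingale inequality applies. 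I would take a union over a finite net of unit vectors and over dyadic blocks of $n$, which gives $\Omega_4$ with the stated probability. I expect this to be the \emph{main obstacle}. The difficulty is that the estimate must hold simultaneously for all $n$, it must be stopped at $\tau_k$, and its failure probability must be uniform in $k$. The failure probability can be made small because the deficit $C\sqrt{n\log n}$ is eventually dominated by $\kappa n/2$, and for small $n$ the bound is dominated by $2^k$.

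On $\Omega_1\cap\Omega_3\cap\Omega_4$ I would show by induction that $\tau_k=\infty$.

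\emph{Base case.} At $n=0$, the initialization~\eqref{eq:localinit} and~\eqref{eq:upper} give $q_{k,0}\le m(k+1)^6/2^k$ and $e_{k,0}^2\le a_0^2k^{-12}m(k+1)^6$. Both are small once $k\ge k_0$.

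\emph{Inductive step.} For $n\le\tau_k$, first bound the determinant term. By $\Omega_3$ the outputs satisfy $|x_{k,j}|\le1/2+|w_{k,j}|=O(\sqrt{\log(nk)})$, so
\[
D^F_{k,n}\le m\log\bigl(1+\mathrm{poly}(\log(nk))\,n/2^k\bigr).
\]
Then the bound $\norm{\delta_{k,n}}^2\le kU_{k,n}/(|g|\lambda_{\min}(F_{k,n}))$, the estimate~\eqref{eq:energy}, and $U_{k,0}\le a_0k^{-13}2^k$ combine to give
\[
e_{k,n}^2\le g^2\norm{\delta_{k,n}}^2\norm{f(x_{k,n})}^2\le\frac{a_0^3k\,[\,a_0k^{-13}2^k+2k^2a_0^2D^F_{k,n}+O(1)\,]\,m(1+|x_{k,n}|)^6}{2^k+\kappa n/4}.
\]
The right-hand side is below $1/4$ uniformly in $n$ for $k\ge k_0(b,a_0)$, because the denominator grows linearly in $n$ while the numerator grows only polylogarithmically. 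Similarly, $q_{k,n}\le\norm{f(x_{k,n})}^2/\lambda_{\min}(F_{k,n})<(2k^2)^{-1}$. Hence neither stopping condition in~\eqref{eq:localstop} is triggered, and $\tau_k=\infty$.

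Finally, on the intersection of the four events, \eqref{eq:energy} and~\eqref{eq:selfnorm} hold for all $n$, so
\[
J_{k,n}\le8U_{k,0}+16k^2\sigma^2D^F_{k,n}+2\sigma^2D^G_{k,n}+O(1).
\]
The same polylogarithmic growth of the regressors bounds $D^G_{k,n}$. For $D^G$ one also uses $|v_{k,n}|=|\widehat\alpha_{k,n}^\top f(x_{k,n})|\le(\norm{\alpha_k}+\norm{\delta_{k,n}})\norm{f(x_{k,n})}$, which is polynomial in $k$ and $\log n$. Consequently both determinant terms are $O(k^{c}\log(n+3))$. Since $8U_{k,0}\le8a_0k^{-13}2^k$, enlarging $k_0$ gives $J_{k,n}\le2^k\log(n+3)=H_k\log(n+3)$ for all $n$. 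The total failure probability is at most $1/2$, which proves~\eqref{eq:localmonitor}.
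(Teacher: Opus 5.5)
Your proposal is correct and is essentially the paper's proof. It uses the same stopping time $\tau_k$, the two bounds of Lemma~\ref{lem:energy}, a Gaussian noise envelope, the density lower bound on $J=[0,1]$ that yields $\kappa$, a martingale lower bound on $F_{k,n}$, the conclusion $\tau_k=\infty$ because the bounds valid up to $\tau_k$ exclude both stopping conditions, and finally the decomposition $J_{k,n}\le 2E_{k,n}+2\norm{M_{k,n}}_{G_{k,n}^{-1}}^2$.

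The one real difference is your ``main obstacle''. The paper sidesteps it by applying Azuma--Hoeffding entrywise with the $k$-dependent deviation threshold $(H_k+\kappa n)/4$. The union bound over all $n\ge1$ is then $O(e^{-H_k\kappa/(16m^2)})$, which directly gives $F_{k,n}\succeq\tfrac12(H_k+\kappa n)I_m$ for $n\le\tau_k$, with no net, no dyadic blocks, and no $C\sqrt{n\log(n+2)}$ deficit to absorb.
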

\begin{proof}
The constants in $O(\cdot)$ below depend only on $b,a_0$.
For $s\ge1$, define
\[
 Z_{k,s}=\ind{s\le\tau_k,\ x_{k,s}\in J}f(x_{k,s})f(x_{k,s})^\top,\qquad
 \Xi_{k,s}=Z_{k,s}-\E[Z_{k,s}\mid\F_{k,s-1}].
\]
Since $\{\tau_k\ge s\}\in\F_{k,s-1}$ and
$x_{k,s}\mid\F_{k,s-1}\sim N(e_{k,s-1},\sigma^2)$, the bound
$|e_{k,s-1}|\le1/2$ on $\{\tau_k\ge s\}$ gives, by integration over $J$,
\begin{equation}\label{eq:stoppedmean}
 \E[Z_{k,s}\mid\F_{k,s-1}]
 \succeq\frac{\exp(-9a_0^2/8)}{\sqrt{2\pi}\,a_0}\ind{\tau_k\ge s}\Gamma_J
 \succeq\kappa\ind{\tau_k\ge s}I_m.
\end{equation}
Each entry of $Z_{k,s}$ lies in $[0,1]$, so each entry of $\Xi_{k,s}$
is a martingale difference bounded in absolute value by one.
Write $\norm{\cdot}_{\mathrm F}$ for the Frobenius norm.
The Azuma--Hoeffding inequality~\cite[Lemma~1]{Azuma1967} and a union bound
over the $m^2$ entries and $n\ge1$ give
\begin{equation}\label{eq:stoppeddeviation}
 \P\left(\exists n\ge1:\,
 \Bigl\|\sum_{s=1}^n\Xi_{k,s}\Bigr\|_{\mathrm F}
 >\frac{H_k+\kappa n}{4}\right)
 \le2m^2\sum_{n=1}^{\infty}
 \exp\!\left\{-\frac{(H_k+\kappa n)^2}{32m^2n}\right\}.
\end{equation}
Since $(H_k+\kappa n)^2/n\ge2H_k\kappa+\kappa^2n$, the right-hand side
is $O(e^{-H_k\kappa/(16m^2)})$, hence at most $1/12$ for all sufficiently
large $k$. On the complementary event in~\eqref{eq:stoppeddeviation},
\eqref{eq:stoppedmean} and $\norm{\cdot}\le\norm{\cdot}_{\mathrm F}$ give
$F_{k,n}\succeq\tfrac34[H_k+\kappa(n-1)]I_m$ for $1\le n\le\tau_k$.
Since $H_k\ge2\kappa$ and $F_{k,0}=H_kI_m$, we obtain
\begin{equation}\label{eq:goodgram}
 F_{k,n}\succeq\tfrac12(H_k+\kappa n)I_m,\qquad0\le n\le\tau_k.
\end{equation}

The Gaussian tail bound gives, with probability at least $11/12$,
\begin{equation}\label{eq:noisetube}
 |w_{k,n+1}|\le4k\sqrt{\log(n+3)},\qquad n\ge0.
\end{equation}
The failure probability is at most
$2\sum_{n\ge0}{(n+3)^{-8}}\le1/12$.
By a union bound, \eqref{eq:goodgram}, \eqref{eq:noisetube},
and \eqref{eq:energy}--\eqref{eq:selfnorm} with $\epsilon=1/6$
hold simultaneously with probability at least
$1-1/12-1/12-1/6-1/6=1/2$.

On this event, put $\ell_n=\log(n+3)>1$. For $n\le\tau_k$,
the initial bound $|x_{k,0}|\le k+1$ and the inequalities
$|e_{k,n-1}|\le1/2$ for $1\le n\le\tau_k$ give
\[
 |x_{k,n}|\le5k\sqrt{\ell_n},\qquad
 \norm{f(x_{k,n})}^2=O(k^6\ell_n^3).
\]
For $0\le j<\tau_k$, the identity
$v_{k,j}=(e_{k,j}-\theta^\top f(x_{k,j}))/(s_kg)$,
the bound $|e_{k,j}|\le1/2$, and the parameter bounds give
$\norm{\chi_{k,j}}^2=O(k^6\ell_j^3)$.
Thus, for a constant $C$ depending only on $b,a_0$,
\[
 \begin{aligned}
 F_{k,n}&\preceq(H_k+Cnk^6\ell_n^3)I_m,\\
 G_{k,n}&\preceq(H_k+Cnk^6\ell_n^3)I_{m+1},
 \qquad 0\le n\le\tau_k.
 \end{aligned}
\]
For large $k$, $Ck^6/H_k\le1$, so
\begin{equation}\label{eq:logdetlocal}
 D_{k,n}^F+D_{k,n}^G\le(2m+1)\log(1+n\ell_n^3)=O(\ell_n),
 \qquad n\le\tau_k.
\end{equation}
By~\eqref{eq:energy} and $\sigma\le a_0$, for $n\le\tau_k$,
\begin{equation}\label{eq:localenergy}
 U_{k,n}+\tfrac14E_{k,n}
 \le U_{k,0}+O(k^2\ell_n).
\end{equation}
By~\eqref{eq:localinit},
$U_{k,0}=(s_kg/k)H_k\norm{\delta_{k,0}}^2\le a_0H_k k^{-13}$.
Combining~\eqref{eq:goodgram} and~\eqref{eq:localenergy} therefore gives
\begin{equation}\label{eq:qelocal}
 q_{k,n}=O\!\left(\frac{k^6\ell_n^3}{H_k+\kappa n}\right),\qquad
 e_{k,n}^2\le k s_kgU_{k,n}q_{k,n}
 =O\!\left(\frac{H_k k^{-6}\ell_n^3+k^9\ell_n^4}{H_k+\kappa n}\right).
\end{equation}
For $p=3,4$ and large $k$, the function
$r\mapsto[\log(H_k/\kappa+3)+\log r]^p/r$ is nonincreasing on $[1,\infty)$.
Taking $r=1+\kappa n/H_k$ and using $n+3\le(H_k/\kappa+3)r$ gives
\[
 \sup_{n\ge0}\frac{\ell_n^p}{H_k+\kappa n}
 \le\frac{\log^p(H_k/\kappa+3)}{H_k}
 =O(k^p2^{-k}),\qquad p=3,4.
\]
Substitution into~\eqref{eq:qelocal} yields
\[
 \begin{aligned}
 \sup_{0\le n\le\tau_k}q_{k,n}&=O(k^9 2^{-k})=o(k^{-2}),\\
 \sup_{0\le n\le\tau_k}e_{k,n}^2&=O(k^{-3}+k^{13}2^{-k})=o(1).
 \end{aligned}
\]
For large $k$, these bounds are strictly below $(2k^2)^{-1}$ and $1/4$,
respectively, so $\tau_k=\infty$.
Combining~\eqref{eq:localenergy}, \eqref{eq:logdetlocal}, and~\eqref{eq:selfnorm}
with the bound on $J_{k,n}$ gives
$J_{k,n}\le8U_{k,0}+O(k^2\ell_n)$ for all $n\ge0$. Hence
\[
 \sup_{n\ge0}\frac{J_{k,n}}{H_k\ell_n}
 =O(k^{-13}+k^2 2^{-k})\longrightarrow0,
\]
which proves~\eqref{eq:localmonitor}.
\end{proof}

\begin{proof}[Proof of sufficiency in Theorem~\ref{thm:main}]
With $a_0$ fixed at the start of this section,
consider an attempt $k\ge a_0$ with $s_k=(-1)^{k-1}=\sgn g$
and $R_k<\infty$. Then $\norm\theta_\infty\le k$ and
$k^{-1}\le s_kg=|g|\le k$, so we apply the calibration bound with
$a=W=k$ and $\varepsilon=k^{-6}$.
Since $\log(\mathrm e+|y_{R_k}|)\ge1$, the choice
$A_k=k\log(\mathrm e+|y_{R_k}|)$ satisfies~\eqref{eq:calibrationbound}
for every $y_{R_k}$ as soon as
\[
 k\ge C_b\bigl[1+\log(\mathrm e+2k+k^6)\bigr].
\]
The right-hand side is $O(\log k)$, so this holds for all sufficiently
large $k$, independently of the starting output.
On $\mathcal C=\{\max_{1\le j\le L}|w_{R_k+j}|\le k\}$,
Proposition~\ref{prop:calibration} gives~\eqref{eq:localinit}
at time $R_k+L$.

The Gaussian tail bound gives
\[
 \P(\mathcal C^c\mid\F_{R_k})
 \le2L e^{-k^2/(2a_0^2)}\le\tfrac14
\]
for all sufficiently large $k$.
Future innovations are independent of $\F_{R_k+L}$, so
Lemma~\ref{lem:local} gives
$\P(\nu_k=\infty\mid\F_{R_k+L})\ge1/2$ on $\mathcal C$.
Consequently,
\[
 \P(\nu_k=\infty\mid\F_{R_k})
 \ge\tfrac12\P(\mathcal C\mid\F_{R_k})\ge\tfrac38.
\]
All thresholds on $k$ depend only on $b,a_0$, so this bound is uniform
over output histories at the start of the attempt.
Proposition~\ref{prop:outer} completes the proof.
\end{proof}
\section{Necessity of the polynomial criterion}\label{sec:necessity}
We prove the converse in Theorem~\ref{thm:main}.
More precisely, if the criterion fails, then for every fixed $\sigma>0$,
every Borel feedback, and every compact parameter rectangle
$\Theta\subset\R^{m+1}$ with nonempty interior and gain interval excluding
zero, there are a deterministic initial state and a fixed
$\vartheta\in\Theta$ for which
\begin{equation}\label{eq:escape}
 \P_\vartheta\!\left\{\lim_{t\to\infty}|y_t|=\infty,\quad
 \lim_{T\to\infty}T^{-1}\sum_{t=0}^{T-1}y_{t+1}^2=\infty\right\}>0.
\end{equation}
On an event of positive probability, we combine lower bounds on cumulative
and individual output logarithms through a polynomial identity to force
divergence.

The change of scale $y_t=\sigma\bar y_t$, $u_t=\sigma\bar u_t$
replaces $\theta_i$ by $\theta_i\sigma^{b_i-1}$ and reduces the noise
variance to one. It preserves the feedback class and the rectangle
properties. Work henceforth with this normalized model. Fix a compact
rectangle $\Theta$ with the properties stated above. On an auxiliary space,
draw $\vartheta\sim N(0,I_{m+1})$ once, independently of iid
$w_s\sim N(0,1)$. For any finite Borel feedback and deterministic $y_0$,
the system recursively defines finite outputs and inputs at every time.
All conditional distributions below refer to this unconditioned auxiliary
model.

For a fixed $W>0$, let
\begin{equation}\label{eq:gaussianenvelope}
 \Omega=\{\vartheta\in\Theta,\ |w_s|\le Ws\text{ for every }s\ge1\}.
\end{equation}
If $\Phi$ is the standard normal distribution function, independence yields
\begin{equation}\label{eq:envelopeprob}
 p_0\triangleq\P(\Omega)=\P\{\vartheta\in\Theta\}
                 \prod_{s\ge1}\{2\Phi(Ws)-1\}>0.
\end{equation}
$p_0$ is independent of the feedback and initial state.

Write $Y_t=|y_t|$ and $X_t=\log Y_t$ for nonzero outputs at $t\ge0$.
Set $X_t=0$ for $t<0$ and define
\[
 S_t=\sum_{s=0}^tX_s,\quad t\ge0,\qquad S_t=0,\quad t<0,
 \qquad D_t=X_t-X_{t-1}.
\]
Thus $D_t=\log(Y_t/Y_{t-1})$ for $t\ge1$.
Set $b_i=0$ for $i>m$.
For $1\le j\le m$ and $t\ge0$, define the residuals
\begin{equation}\label{eq:necessresiduals}
 \begin{split}
 R_{j,t}&=S_{t+1}-\sum_{i=1}^j b_iX_{t-i+1}-X_{t-j+1}
                   -\sum_{i=j+1}^m b_iX_{t-i},\\
 W_t&=X_{t+1}-\sum_{i=1}^m b_iD_{t-i+1}.
 \end{split}
\end{equation}

\begin{lemma}\label{lem:necesssample}
There are $K>1$ and $C>0$, depending only on $m,b,\Theta,W$, such that
every finite Borel feedback and deterministic initial state with $Y_0\ge1$
admits an event $\mathcal D\subset\Omega$ with $\P(\mathcal D)\ge p_0/2$.
On $\mathcal D$, all outputs are nonzero. For any $t\ge0$, if
\begin{equation}\label{eq:necessseparation}
 Y_0\ge1,\qquad Y_s\ge KY_{s-1},\quad1\le s\le t,
\end{equation}
then, with $\ell_t=C\{(t+1)^2+\log(2+S_t)\}$,
\begin{equation}\label{eq:necessbothresiduals}
 R_{j,t}\ge-\ell_t,\quad1\le j\le m,\qquad W_t\ge-\ell_t.
\end{equation}
\end{lemma}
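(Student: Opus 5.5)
The plan is a Bayesian argument in the normalized auxiliary model, where $\vartheta\sim N(0,I_{m+1})$. Set $\phi_s=(f(y_s)^\top,u_s)^\top$ and $P_t^{-1}=I+\sum_{s<t}\phi_s\phi_s^\top$. Given $\F_t$, the parameter is $N(\widehat\vartheta_t,P_t)$. Hence $y_{t+1}\mid\F_t$ is Gaussian with variance $v_t=1+\phi_t^\top P_t\phi_t$, and its density is bounded by $(2\pi v_t)^{-1/2}$. This gives the anti-concentration bound $\P(|y_{t+1}-c|\le \varepsilon\sqrt{v_t}\mid\F_t)\le\varepsilon$ for every $\F_t$-measurable $c$, and in particular $y_{t+1}\ne0$ almost surely.

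I would take $\varepsilon_t=\tfrac{p_0}{4}e^{-(t+1)^2}$ and apply a union bound over $t$ (and over the finitely many centres $c$ used below). Intersecting the resulting event with $\Omega$ then gives $\mathcal D\subset\Omega$ with $\P(\mathcal D)\ge p_0/2$. On $\mathcal D$ all outputs are nonzero, and each anti-concentration inequality holds with loss at most $(t+1)^2$ in logarithmic scale. This loss is the source of the $(t+1)^2$ term in $\ell_t$.

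\emph{Cumulative bound ($R_{j,t}$).} The matrix determinant lemma gives
\[
\log\det P_{t+1}^{-1}=\sum_{s\le t}\log v_s .
\]
On $\mathcal D$ we have $2X_{s+1}\ge\log v_s-2\log\varepsilon_s^{-1}$. Summing over $s\le t$ gives
\[
2S_{t+1}\ge\log\det P_{t+1}^{-1}-O((t+1)^3).
\]
The cubic term must be sharpened to the stated $\ell_t$, presumably by sharper per-step $\varepsilon_s$ together with the $\log(2+S_t)$ slack.

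Next remove the unknown input from the regressor. On $\Omega$ one has $gu_s=y_{s+1}-\theta^\top f(y_s)-w_{s+1}$, so the column operation $(f,u)\mapsto(f,gu+\theta^\top f)$, whose determinant is $g$, gives
\[
\det\sum_s\phi_s\phi_s^\top=g^{-2}\det\sum_s\psi_s\psi_s^\top,\qquad \psi_s=(f(y_s)^\top,\,y_{s+1}-w_{s+1})^\top .
\]
The factor $g$ lies in a compact set excluding zero, and $|w_{s+1}|\le W(s+1)$ on $\Omega$. By Cauchy--Binet, $\det P_{t+1}^{-1}$ is at least the squared minor of the rows $s=t-m,\dots,t$. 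Expanding this minor as in Lemma~\ref{lem:cramer}, the term in which row $t-j$ supplies the last column and rows $t,\dots,t-j+1,t-j-1,\dots,t-m$ supply $b_1,\dots,b_m$ in order has logarithm $\sum_{i\le j}b_iX_{t-i+1}+X_{t-j+1}+\sum_{i>j}b_iX_{t-i}$, up to $O(\log t)$. So $R_{j,t}\ge-\ell_t$ follows once this term is not cancelled by the others, which is the only remaining issue.

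\emph{Individual bound ($W_t$).} Here I would use $2X_{t+1}\ge\log v_t-O((t+1)^2)$ directly and bound
\[
v_t\ge\frac{\det P_{t+1}^{-1}}{\det P_t^{-1}} .
\]
I would bound the numerator below through the minor on rows $t-m,\dots,t$ and the denominator above through $\det P_t^{-1}\le(1+\sum_{s<t}\norm{\phi_s}^2)^{m+1}$. The separation condition~\eqref{eq:necessseparation} makes the last output dominate each sum, which should produce the differences $D_{t-i+1}$ weighted by $b_i$. The $\log(2+S_t)$ slack absorbs the bound on $|u_s|$, which is polynomial in the outputs on $\Omega$.

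\emph{Main obstacle: cancellation in the minor.} Because separation makes the competing terms comparable in magnitude, the $m+1$ leading candidates indexed by $j$ can cancel, which is exactly the difficulty flagged in the introduction. Two steps handle this.
\begin{itemize}
\item Separating terms with different $y_{s+1}$ factors. I would apply anti-concentration a second time. Expanding the minor along the entry $y_{t+1}-w_{t+1}$ of the last row makes it affine in $y_{t+1}$, with an $\F_t$-measurable cofactor and offset, up to the bounded noise correction. Conditionally on $\F_t$, $|{\rm minor}|\ge\varepsilon_t\sqrt{v_t}\,|{\rm cofactor}|$ outside an event of probability $\varepsilon_t$. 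Recursing on the cofactor, which has the same structure one step earlier, peels off one output at a time.
\item Separating terms that share the same $y_{s+1}$ factor. Within each cofactor the remaining terms are pure products of powers of distinct outputs. Choosing $K$ large enough, depending on the gaps between the $b_i$, makes the rearrangement-maximal term dominate, as in Lemma~\ref{lem:cramer}.
\end{itemize}
Each recursion costs $O((t+1)^2)$ in logarithm, and there are at most $m+1$ levels, so everything fits in $\ell_t$ after enlarging $C$. I expect the delicate points to be organising this recursion so that the selected term for each $j$ survives, and choosing $K$ so that it depends only on $m,b,\Theta,W$.
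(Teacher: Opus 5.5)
\noindent Your second bullet is sound: terms sharing the same linear factor are handled by rearrangement once $K$ is large, as in the paper's remainder estimate. The first bullet, which carries the real difficulty, does not work.

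In your $(m+1)\times(m+1)$ minor with rows $\psi_t,\dots,\psi_{t-m}$, the cofactor of $y_{t+1}-w_{t+1}$ is the pure power determinant $\det[f(y_{t-1})^\top;\dots;f(y_{t-m})^\top]$. It is not a minor of the same structure one step earlier. So the affine small-ball step only gives $|\mathrm{minor}|\gtrsim\varepsilon_t\sqrt{v_t}\prod_iY_{t-i}^{b_i}$, which is the $j=0$ term. Every term $e^{L_j(t)}$ with $j\ge1$, which is what $R_{j,t}$ needs, sits in the offset. Up to noise, the offset is the $\F_t$-measurable minor $\mathscr D_{t,m}$ with first row $(f(y_t)^\top,0)$.

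This offset is not affine in $y_t$ or in any older output. Each $y_s$ enters one row through $f(y_s)$ with real exponents and the next row linearly; for example, $\mathscr D_{t,m}=\sum_i(\alpha_iy_t^{b_i}+\beta_iy_t^{b_i+1})$ with $\F_{t-1}$-measurable $\alpha_i,\beta_i$, and similarly in the older outputs. Under separation the leading monomials $M_a^*$, which have different linear factors, are comparable and can cancel. One measurable centre per output cannot rule this out; you need a quantitative non-vanishing statement for this generalized polynomial in several outputs, uniform in the past. The paper supplies exactly this:
\begin{itemize}
\item Lemma~\ref{lem:necessgeometry} gives nondegeneracy of $\mathscr D/\mathscr M$ together with its logarithmic derivatives.
\item The o-minimal Lemma~\ref{lem:centers} gives finitely many Borel centres and a lower bound by a product of distances to them.
\item The small-ball event~\eqref{eq:necesscenterevent} keeps each new output away from these centres, and the resulting loss $\prod_s\sigma_{s-1}/Y_s$ is cancelled by $\Delta_t$.
\end{itemize}

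Keeping $y_{t+1}$ inside the minor causes a further problem. The feedback can place $\E[y_{t+1}\mid\F_t]$ at your centre, so $|\mathrm{minor}|$ need not be comparable to the offset. The compensating fact, that $Y_{t+1}$ is then large, is discarded by $Y_{t+1}\ge\varepsilon_t\sqrt{v_t}$. The paper avoids this with the second-moment bound $Y_{t+1}\ge\delta(t+1)^{-2}\sqrt{M_t}$ and $M_t\ge a_t^\top G_t^{-1}a_t\ge|\mathscr D_{t,k}|^2/\det G_t$, so only $\F_t$-measurable minors ever appear.

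Your $W_t$ argument also fails as stated. On $\Omega$ the best output-based bound on $|u_{t-1}|$ is of order $Y_t+Y_{t-1}^{b_1}$. The bound $\det P_t^{-1}\le(1+\sum_{s<t}\norm{\phi_s}^2)^{m+1}$ therefore costs an amount of order $X_t$ in the logarithm, not $O(\log(2+S_t))$. Even for the outputs alone, a trace bound replaces $2U_t=2\sum_ib_iX_{t-i}$ by a multiple of the single largest logarithm, so the differences $\sum_ib_iD_{t-i+1}$ cannot emerge. The missing ingredient is the input-free inequality $v_t\ge1+f_t^\top B_t^{-1}f_t=\det B_{t+1}/\det B_t$, with drift Gram matrix $B_t=I_m+\sum_{s<t}f_sf_s^\top$ (Lemma~\ref{lem:necessprediction}). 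You combine it with the Cauchy--Binet rearrangement bounds $\det B_t\le C_m(t+1)^me^{2U_t}$ and $\det B_{t+1}\ge\frac14e^{2U_{t+1}}$.

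Two smaller repairs are also needed:
\begin{itemize}
\item Take $\varepsilon_s\asymp\delta s^{-2}$, so the cumulative loss is $O((t+1)\log(t+2))$ rather than cubic.
\item Replace $y_{s+1}-w_{s+1}$ by the observable $y_{s+1}$, using the comparison $\det G_t\le C(t+1)^3\det I_t$ on $\Omega$. Then all centres are measurable with respect to the observation filtration rather than depending on the unobserved noise.
\end{itemize}
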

The bound on $W_t$ follows from a conditional variance estimate.
The bound on $R_{j,t}$ also uses the conditional mean through the second
moment. Gaussian small ball estimates turn these moment bounds into
pathwise bounds on the same event $\mathcal D$.
Appendix~\ref{app:centers} gives the determinant estimates, the construction
of $\mathcal D$, and the proof of Lemma~\ref{lem:necesssample}.

We combine the residual bounds to propagate $D_t\ge\log K$.
The following identity gives positive coefficients to both residuals
and the past increments.
\begin{lemma}\label{lem:necessroot}
If~\eqref{eq:criterion} fails, there exist $1<r<b_1$ and
$j\in\{1,\ldots,m\}$ such that
\[
 \PP_j(r)=\min_{1\le i\le m}\PP_i(r)=0.
\]
For this pair, there are $h_0,\ldots,h_{m-1}>0$ with the following
property. For any sequence $S$ vanishing at negative times, set
$X_t=S_t-S_{t-1}$, $D_t=X_t-X_{t-1}$, and define $R_{j,t},W_t$
by~\eqref{eq:necessresiduals}. The sequence
\begin{equation}\label{eq:necessrootrec}
 F_0=S_0,\qquad F_{t+1}=rF_t+R_{j,t},\qquad t\ge0,
\end{equation}
satisfies, with $\beta_r=(r-1)^2/r$,
\begin{equation}\label{eq:necessgrowthidentity}
 D_{t+1}=\beta_r F_t+\frac{r-1}{r}R_{j,t}+\frac1rW_t
                           +\frac1r\sum_{i=0}^{m-1}h_iD_{t-i},
 \qquad t\ge0.
\end{equation}
\end{lemma}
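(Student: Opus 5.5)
The plan has two parts: existence of the pair $(r,j)$, and then the identity, which will follow from a polynomial division in the shift operator.

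\textbf{Existence of $(r,j)$.} Put $\mathcal M(x)=\min_{1\le i\le m}\PP_i(x)$. It is continuous, and $\mathcal M(1)=1$. If~\eqref{eq:criterion} fails, $\mathcal M$ vanishes somewhere in $[1,b_1]$, so I take $r$ to be its smallest zero there. By~\eqref{eq:adjacent}, $\mathcal M(x)=\PP_1(x)$ for $x\ge b_1$. The lower bound on $\PP_1$ obtained in the proof of Lemma~\ref{lem:schedule} gives $\mathcal M(b_1)>0$, hence $1<r<b_1$. Let $j$ be an index attaining the minimum at $r$. By Remark~\ref{rem:activeintervals} I may take $j$ with $b_{j+1}\le r\le b_j$ (with $b_{m+1}=0$). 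Minimality of $r$ also gives $\mathcal M>0$ on $[1,r)$. Both facts are recorded for the positivity step below.

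\textbf{Reduction to a polynomial identity.} Write everything in terms of $S$ using the backward shift $z$. Then $X=(1-z)S$, $D=(1-z)^2S$, and
\[
 R_{j,t}=\Bigl[z^{-1}-(1-z)\Bigl(\textstyle\sum_{i\le j}b_iz^{i-1}+z^{j-1}+\sum_{i>j}b_iz^{i}\Bigr)\Bigr]S_t .
\]
Also $W_t=[z^{-1}(1-z)-(1-z)^2\sum_i b_iz^{i-1}]S_t$. Instead of solving for $F$, set
\[
 G_t=D_{t+1}-\tfrac{r-1}{r}R_{j,t}-\tfrac1rW_t-\tfrac1r\sum_{i<m}h_iD_{t-i}.
\]
I will show that $G_0=\beta_rS_0$ and $G_{t+1}=rG_t+\beta_rR_{j,t}$. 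Uniqueness of solutions of~\eqref{eq:necessrootrec} then gives $G_t=\beta_rF_t$.

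For sequences vanishing at negative times, the recurrence is an identity between finite-order shift operators applied to $S$. Passing to the forward variable $x=z^{-1}$ and multiplying by a power of $x$, it becomes an equality of polynomials in $x$. The unknown $h$ enters through $H(x)=\sum_i h_ix^{m-1-i}$. After collecting terms, the required equality should read
\[
 (x-r)\bigl[\text{explicit term}+(x-1)^2H(x)\bigr]=c\,\PP_j(x)\cdot(\text{known factor}).
\]
It is solvable exactly because $\PP_j(r)=0$: $H$ is determined as the quotient of $\PP_j(x)/(x-r)$, suitably reduced, by synthetic division. The initial condition $G_0=\beta_rS_0$ should then be automatic from the vanishing at negative times, which I would check by evaluating at $t=0$.

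\textbf{Positivity of $h_i$ (main obstacle).} The coefficients of the quotient are Horner partial sums, namely tail sums of the coefficients of $\PP_j$ weighted by powers of $r$. Equivalently, they are values $r^{-k}\times$(partial polynomial at $r$). I will express each $h_i$ as such a tail $T_k(r)$ and prove $T_k(r)>0$ using three ingredients:
\begin{itemize}
 \item the ordering $b_{j+1}\le r\le b_j$, which fixes the signs of the coefficient blocks $(b_i-b_{i+1})$ and $(b_j-r)$;
 \item the inequalities $\PP_i(r)\ge\PP_j(r)=0$ for neighbouring $i$, via~\eqref{eq:adjacent};
 \item minimality of $r$, which excludes a sign change in the tails.
\end{itemize}
I expect this sign analysis to be the hardest part. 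First I would try an explicit formula for $T_k$ as a combination of terms $(b_i-r)r^{\cdot}$ and $(r-1)$ with manifestly nonnegative weights, splitting the indices into $i\le j$ and $i>j$. If that fails, I would write $T_k(r)=r^{-k}\bigl[\PP_{j'}(r)-\text{(positive remainder)}\bigr]$ for a suitable $j'$, and use $\PP_{j'}(r)\ge 0$.
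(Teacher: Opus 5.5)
Your existence argument for $(r,j)$ is correct. In fact every minimizing index satisfies $b_{j+1}\le r\le b_j$ by~\eqref{eq:adjacent}, so taking the smallest zero is unnecessary. Your shift-operator expressions for $R_{j,t}$ and $W_t$ are also right.

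Reducing to $G_t$ is legitimate. What you need is $G_t=\beta_r\,q_j(z)S_t$ with
\[
 q_j(z)=\frac{1-z(1-z)A_j(z)}{1-rz},\qquad
 A_j(z)=\sum_{i=1}^j b_iz^{i-1}+z^{j-1}+\sum_{i=j+1}^m b_iz^i,
\]
since $\PP_j(x)=x^{m+2}\{1-z(1-z)A_j(z)\}$ for $z=1/x$. The initial condition then follows from $q_j(0)=1$.

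However, you never produce the $h_i$. Your target polynomial equation has more coefficient constraints than the $m$ unknowns. So $\PP_j(r)=0$ only accounts for the factor $x-r$, and you still have to check divisibility by $(x-1)^2$ and the degree bound. The paper settles this with the explicit choice
\[
 \sum_{i=0}^{m-1}h_iz^i=\mathcal H(z)=V(z)+\frac{(r-1)A_j(z)-r^2}{1-rz},
 \qquad V(z)=\sum_{i=1}^m b_iz^{i-1}.
\]
This is a polynomial because the root equation is exactly $A_j(1/r)=r^2/(r-1)$, and it has degree at most $m-1$ since $\deg A_j\le m$. The identity then reduces to $(r-1)^2-r^2(1-z)^2=-(1-rz)(2r-1-rz)$.

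The genuine gap is positivity. You give a list of candidate ingredients rather than an argument, and they are not the right ones:
\begin{itemize}
 \item The $h_i$ are Horner sums for $(r-1)A_j-r^2$, shifted by the coefficients of $V$. They are not tail sums of the coefficients of $\PP_j$.
 \item The inequalities $\PP_i(r)\ge0$ for $i\ne j$ and the minimality of $r$ play no role.
 \item Your fallback cannot work: writing $T_k(r)=r^{-k}[\PP_{j'}(r)-\text{positive}]$ with $\PP_{j'}(r)\ge0$ gives no lower bound at all.
\end{itemize}

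The missing idea is to compare coefficients in $(1-rz)\mathcal H=(1-rz)V+(r-1)A_j-r^2$ from both ends. That is, use the tail form of the Horner sums for the high indices and the head form for the low ones.

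From the top, with $h_m=0$ and $b_{m+1}=0$,
\[
 h_{k-1}=\frac{h_k+b_k-b_{k+1}}r\quad(k=m,\dots,j+1),\qquad
 h_{j-1}=\frac{h_j}r+b_j-\frac{b_{j+1}}r.
\]
All of these are positive because the $b_i$ decrease and $r>1$.

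From the bottom, when $j\ge2$, one gets $h_0=r(b_1-r)$ and
\[
 h_i=rh_{i-1}-r(b_i-b_{i+1})\ge r(b_{i+1}-r)>0,\qquad 1\le i\le j-2.
\]
This follows by induction using $b_i>r$ for $i<j$, which comes from $r\le b_j$.

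Neither direction alone suffices. Continuing from the top gives, for example, $h_{j-2}=h_{j-1}/r+(b_{j-1}-b_j)-(r-1)/r$, whose sign is not evident.
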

\begin{proof}
By~\eqref{eq:adjacent} and the algebraic bound in the proof of
Lemma~\ref{lem:schedule},
$\min_i\PP_i(b_1)=\PP_1(b_1)\ge b_1>0$.
Failure of~\eqref{eq:criterion} therefore gives $x_0\in(1,b_1)$ with
$\min_i\PP_i(x_0)\le0$. Continuity and $\PP_i(1)=1$ give
$r\in(1,b_1)$ with $\min_i\PP_i(r)=0$.
Choose a minimizing index $j$. Equation~\eqref{eq:adjacent} gives
\begin{equation}\label{eq:necessactivity}
 b_j\ge r\ge b_{j+1}.
\end{equation}
Appendix~\ref{app:growth} derives~\eqref{eq:necessgrowthidentity}
and proves $h_i>0$ for $0\le i\le m-1$.
\end{proof}
The sequence $F_t$ accumulates the residuals with geometric weights.
If $R_{j,t},W_t\ge-\ell_t$ and the past $D_s$ are nonnegative,
\eqref{eq:necessgrowthidentity} gives
$D_{t+1}\ge\beta_r F_t-\ell_t$.

When~\eqref{eq:criterion} fails, choose $j,r$ as in
Lemma~\ref{lem:necessroot}. Fix $1<\rho<r$ and put
$B=2+\sum_{i=1}^m b_i$. Set $y_0=e^A$ with $A\ge1$ sufficiently large
as specified below, independently of the feedback. Then
$X_0=S_0=D_0=F_0=A$ and $D_s=0$ for $s<0$.
For any Borel feedback, work on its event $\mathcal D$.
We prove separation~\eqref{eq:necessseparation} together with
\begin{equation}\label{eq:necessinduction}
 F_t\ge A\rho^t,\qquad S_t\le B^tF_t,\qquad t\ge0.
\end{equation}
The bound on $S_t$ controls $\ell_t/F_t$.
The claims hold at $t=0$. Suppose they hold through $t$.
Lemma~\ref{lem:necesssample} gives $R_{j,t},W_t\ge-\ell_t$ with
\[
 \frac{\ell_t}{F_t}
 \le C\frac{(t+1)^2+t\log B+\log(2+F_t)}{F_t}
 \le C'\frac{1+\log A}{A}.
\]
The last bound uses $F_t\ge A\rho^t$, the decrease of
$\log(2+x)/x$ for $x>0$, and
$\sup_{t\ge0}(t+1)^2\rho^{-t}<\infty$.
Thus $C'$ is independent of $t$, $A$, and the feedback.
Choose $A$ so that $C'(1+\log A)/A\le\min\{r-\rho,\beta_r/2\}$
and $\beta_r A/2\ge\log K$.
Since the past $D_s$ are nonnegative,
\eqref{eq:necessrootrec} and~\eqref{eq:necessgrowthidentity} give
\begin{equation}\label{eq:necessfinalgrowth}
 \begin{aligned}
 F_{t+1}&\ge rF_t-\ell_t\ge\rho F_t\ge A\rho^{t+1},\\
 D_{t+1}&\ge\beta_r F_t-\ell_t\ge\frac{\beta_r}{2}F_t
                    \ge\frac{\beta_r}{2}A\rho^t\ge\log K.
 \end{aligned}
\end{equation}
This preserves separation. The $X_s$ terms in $S_{t+1}-R_{j,t}$
have $s\le t$ and lie between zero and $S_t$.
Thus~\eqref{eq:necessresiduals} and~\eqref{eq:necessrootrec} give
\[
 S_{t+1}\le(B-1)S_t+F_{t+1}\le B^{t+1}F_{t+1},
\]
where the last bound uses $S_t\le B^tF_t$, $F_{t+1}\ge F_t$, and $B^t\ge1$.
This closes the induction. By~\eqref{eq:necessfinalgrowth},
$X_t\to\infty$ and hence $Y_t\to\infty$ on $\mathcal D$.
Consequently, $T^{-1}\sum_{t=0}^{T-1}y_{t+1}^2\to\infty$ as $T\to\infty$.

Let $\mathcal B$ be this escape event and $\mu$ the auxiliary Gaussian
prior. Since $\mathcal D\subset\{\vartheta\in\Theta\}\cap\mathcal B$,
\[
 0<p_0/2\le\P(\mathcal D)
 \le\int_\Theta\P_\vartheta(\mathcal B)\,\mu(\dd\vartheta).
\]
Therefore some fixed $\vartheta\in\Theta$ has
$\P_\vartheta(\mathcal B)>0$. Conditional on this parameter, the noise
has its original iid $N(0,1)$ law. Undoing the deterministic scaling
proves~\eqref{eq:escape} for every prescribed $\sigma>0$.

\section{Conclusion}
For scalar stochastic systems with finitely many unknown coefficients
of power functions and an unknown nonzero input gain, we have established
a polynomial criterion for global stabilizability. The criterion is
necessary and sufficient and requires strict positivity of every
$\PP_j$ on $[1,b_1]$.

When the criterion holds, the constructed feedback makes the time average
of the squared output converge almost surely to the noise variance,
without knowledge of parameter bounds, the gain sign, or the noise
variance. When it fails, every Borel feedback admits a fixed parameter
and a deterministic initial state for which the output magnitude and
the time average of its square diverge with positive probability.
Thus the same criterion separates systems for which the noise variance
is attainable from those for which global stabilization is impossible,
including cases where a criterion polynomial has a multiple zero.
\appendix
\section{Proof of Lemma~\ref{lem:necesssample}}\label{app:centers}
We work in the auxiliary Gaussian model of Section~\ref{sec:necessity}.
The product in~\eqref{eq:envelopeprob} is positive because each factor is
positive and the deficits from one are summable,
$\sum_{s=1}^{\infty}2\{1-\Phi(Ws)\}\le\sum_{s=1}^{\infty}2e^{-W^2s^2/2}<\infty$.

\subsection{Conditional information bounds}
Write $f_t=f(y_t)$ and $\varphi_t=(f_t^\top,u_t)^\top$.
For every Borel feedback, fixing $y_0,\ldots,y_t$ fixes each past input
$u_s=\pi_s(y_0,\ldots,y_s)$, $s<t$. Completing the likelihood square
therefore gives
\begin{equation}\label{eq:necessposterior}
 \vartheta\mid\F_t\sim N(\widehat\vartheta_t,I_t^{-1}),\qquad
 I_t=I_{m+1}+\sum_{s=0}^{t-1}\varphi_s\varphi_s^\top,\qquad
 \widehat\vartheta_t=I_t^{-1}\sum_{s=0}^{t-1}\varphi_s y_{s+1}.
\end{equation}
Define
\begin{equation}\label{eq:necessinfo}
 \Delta_t=\sqrt{\det I_t},\quad
 \sigma_t^2=\Var(y_{t+1}\mid\F_t)=1+\varphi_t^\top I_t^{-1}\varphi_t,
 \quad M_t=\E[y_{t+1}^2\mid\F_t].
\end{equation}
Take $\sigma_t>0$. The matrix determinant lemma gives $\Delta_{t+1}=\sigma_t\Delta_t$, with $\Delta_0=1$.
Introduce the drift and output Gram matrices
\begin{equation}\label{eq:necessgrams}
 B_t=I_m+\sum_{s=0}^{t-1}f_sf_s^\top,\qquad
 G_t=I_{m+1}+\sum_{s=0}^{t-1}\psi_s\psi_s^\top,\qquad
 \psi_s=(f_s^\top,y_{s+1})^\top.
\end{equation}
\begin{lemma}\label{lem:necessprediction}
Put $a_t=(f_t^\top,0)^\top$. For every output history and all $t\ge0$,
\begin{equation}\label{eq:necessprediction}
 \sigma_t^2\ge1+f_t^\top B_t^{-1}f_t,\qquad
 M_t\ge a_t^\top G_t^{-1}a_t.
\end{equation}
There is a constant $C$, depending only on $\Theta,W$, such that on $\Omega$,
\begin{equation}\label{eq:necessdetcompare}
 \det G_t\le C(t+1)^3\det I_t,\qquad t\ge0.
\end{equation}
\end{lemma}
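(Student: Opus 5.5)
The three bounds in Lemma~\ref{lem:necessprediction} are of different natures. I would prove the two Gram-matrix comparisons, the first inequality in~\eqref{eq:necessprediction} and~\eqref{eq:necessdetcompare}, by a variational formula and a linear change of variables. The second inequality in~\eqref{eq:necessprediction}, the one involving the conditional second moment $M_t$, needs a Bayesian argument; I expect it to be the main obstacle, and only a direction of attack is given for it.

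\emph{First inequality.} For $I\succ0$ and any vector $x$, completing the square gives $x^\top I^{-1}x=\sup_q\{2q^\top x-q^\top Iq\}$. I would apply this with $I=I_t$ and $x=\varphi_t=(f_t^\top,u_t)^\top$, and restrict to test vectors $q=(p^\top,0)^\top$. Then $q^\top\varphi_t=p^\top f_t$ and $q^\top I_tq=p^\top B_tp$, because the upper left $m\times m$ block of $I_t$ is exactly $B_t$. Maximizing over $p$ gives
\[
 \varphi_t^\top I_t^{-1}\varphi_t\ge f_t^\top B_t^{-1}f_t ,
\]
whatever the input $u_t$ is. Together with~\eqref{eq:necessinfo} this is the first bound.

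\emph{Determinant comparison~\eqref{eq:necessdetcompare}.} Write $y_{s+1}=\vartheta^\top\varphi_s+w_{s+1}$. With
\[
 P=\begin{pmatrix}I_m&0\\ \theta^\top&g\end{pmatrix}
 \qquad\text{and}\qquad e=(0,\ldots,0,1)^\top,
\]
this says $\psi_s=P\varphi_s+w_{s+1}e$. Using $(x+y)(x+y)^\top\preceq2xx^\top+2yy^\top$, I would obtain
\[
 G_t\preceq I_{m+1}+2P(I_t-I_{m+1})P^\top+2\Bigl(\sum_{s<t}w_{s+1}^2\Bigr)ee^\top
 \preceq c\,PI_tP^\top+\lambda_tee^\top .
\]
Here $c$ depends only on $\Theta$, since $PP^\top\succeq c_\Theta I$ when $g$ ranges over an interval excluding zero. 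On $\Omega$ one has $|w_s|\le Ws$, so $\lambda_t=2\sum_{s<t}w_{s+1}^2\le2W^2(t+1)^3$. Put $A=cPI_tP^\top$. The matrix determinant lemma gives $\det(A+\lambda_tee^\top)=\det A\,(1+\lambda_te^\top A^{-1}e)$. Since $\lambda_{\min}(A)\ge c\,c_\Theta$ and $\det A=c^{m+1}g^2\det I_t$ with $g^2$ bounded on $\Theta$, this yields $\det G_t\le C(t+1)^3\det I_t$.

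\emph{Second inequality.} By the variational formula it suffices to show, for every $q=(p^\top,r)^\top$,
\[
 (p^\top f_t)^2\le M_t\Bigl(\norm q^2+\sum_{s<t}(p^\top f_s+ry_{s+1})^2\Bigr).
\]
The natural route is a Bayesian Cram\'er--Rao, or Cauchy--Schwarz, inequality under the Gaussian posterior~\eqref{eq:necessposterior}. Since $M_t=\E[y_{t+1}^2\mid\F_t]$, the aim is to exhibit $p^\top f_t$ as a conditional covariance between $y_{t+1}$ and a score-type variable built from the posterior density of $\vartheta$, whose conditional second moment is controlled by $\norm q^2+\sum_{s<t}(q^\top\psi_s)^2$.

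Concretely, I would reparametrize by $(\alpha,\gamma)=(\theta/g,1/g)$, in which $\psi_s$ is exactly the regressor: $\gamma y_{s+1}-\alpha^\top f_s=u_s+\gamma w_{s+1}$, as in~\eqref{eq:normreg}. The test direction $q$ then corresponds to a directional derivative of the log-likelihood in $(\alpha,\gamma)$. The difficulty is that the prior and the noise are not Gaussian in these coordinates, and that $u_t$ is arbitrary. So I would first try to make the argument work for the quadratic form $\norm q^2+\sum_{s<t}(q^\top\psi_s)^2$ exactly, absorbing the prior term into the identity part of $G_t$. If the constants do not come out equal to one, I would fall back on proving the bound up to a constant depending on $\Theta$, which should suffice for Lemma~\ref{lem:necesssample}.
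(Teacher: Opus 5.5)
Your proofs of the first inequality in \eqref{eq:necessprediction} and of \eqref{eq:necessdetcompare} are correct. The determinant comparison uses the same change of variables $\psi_s=P\varphi_s+w_{s+1}e$ as the paper. For the first inequality, restricting the variational formula to $q=(p^\top,0)^\top$ (that is, $I_t^{-1}$ dominates the inverse of its leading block $B_t$) is a clean alternative to the paper's weighted Cauchy--Schwarz argument.

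The genuine gap is the second inequality $M_t\ge a_t^\top G_t^{-1}a_t$. You correctly reduce it to $(p^\top f_t)^2\le M_t\,q^\top G_tq$ for all $q=(p^\top,r)^\top$. After that you only sketch a Bayesian Cram\'er--Rao route, which you do not carry out. Your fallback with a $\Theta$-dependent constant would not give the lemma as stated. As written, this half of the lemma is unproved.

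The missing idea is elementary and needs no score function or reparametrization. Put $\bar q=I_t^{-1}\varphi_t$ and $\lambda_s=\varphi_s^\top\bar q$ for $s<t$. Then $I_t\bar q=\varphi_t$ reads $\varphi_t=\bar q+\sum_{s<t}\lambda_s\varphi_s$. By \eqref{eq:necessposterior}, the conditional mean is $\mu_t=\widehat\vartheta_t^\top\varphi_t=\sum_{s<t}\lambda_sy_{s+1}$. Moreover $\norm{\bar q}^2+\sum_{s<t}\lambda_s^2=\bar q^\top I_t\bar q=\sigma_t^2-1$. Taking the first $m$ coordinates gives $f_t=\bar q_f+\sum_{s<t}\lambda_sf_s$, so for every $(p,r)$,
\[
 p^\top f_t=p^\top\bar q_f-r\mu_t+\sum_{s<t}\lambda_s\bigl(p^\top f_s+ry_{s+1}\bigr).
\]
Cauchy--Schwarz then gives
\[
 (p^\top f_t)^2\le\Bigl(\norm{\bar q_f}^2+\mu_t^2+\sum_{s<t}\lambda_s^2\Bigr)\,q^\top G_tq\le(\mu_t^2+\sigma_t^2-1)\,q^\top G_tq=(M_t-1)\,q^\top G_tq.
\]
This is exactly your reduced inequality, and even slightly stronger.

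The key point is that the conditional mean of $y_{t+1}$ is a combination of past outputs with the same weights $\lambda_s$ that express $\varphi_t$ through past regressors. This is what matches the outputs $y_{s+1}$ in $\psi_s$ against the arbitrary inputs $u_s$ in $\varphi_s$, the mismatch your $(\alpha,\gamma)$ reparametrization was meant to handle. The same weights also yield your first inequality, which is how the paper obtains both bounds in one argument.
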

\begin{proof}
For vectors $b_s,x$ of the same dimension and arbitrary weights $\lambda_s$,
the Cauchy--Schwarz inequality gives
\[
 x^\top\left(I+\sum_s b_sb_s^\top\right)^{-1}x
 \le\norm{x-\sum_s\lambda_sb_s}^2+\sum_s\lambda_s^2.
\]
Put $q=I_t^{-1}\varphi_t$, $\lambda_s=\varphi_s^\top q$ for $0\le s<t$,
and $\mu_t=\E[y_{t+1}\mid\F_t]$.
By~\eqref{eq:necessposterior} and the definition of $I_t$,
\[
 \varphi_t-\sum_{s<t}\lambda_s\varphi_s=q,\qquad
 \mu_t=\sum_{s<t}\lambda_sy_{s+1},\qquad
 \norm q^2+\sum_{s<t}\lambda_s^2=q^\top I_tq=\sigma_t^2-1.
\]
Let $q_f$ contain the first $m$ coordinates of $q$.
Applying the Cauchy--Schwarz inequality with $b_s=f_s$ gives
\[
 f_t^\top B_t^{-1}f_t
 \le\norm{q_f}^2+\sum_{s<t}\lambda_s^2\le \sigma_t^2-1.
\]
Since $a_t-\sum_{s<t}\lambda_s\psi_s=(q_f^\top,-\mu_t)^\top$,
the same bound with $b_s=\psi_s$ gives
\[
 a_t^\top G_t^{-1}a_t
 \le\norm{q_f}^2+\mu_t^2+\sum_{s<t}\lambda_s^2\le M_t-1.
\]

For the determinant comparison, put
$T=\left(\begin{smallmatrix}I_m&0\\\theta^\top&g\end{smallmatrix}\right)$
and let $e$ be the last standard basis vector of $\R^{m+1}$.
The system equation gives $T^{-1}\psi_s=\varphi_s+g^{-1}w_{s+1}e$.
On $\Theta$, both $T^{-1}$ and $g^{-1}$ are uniformly bounded. Hence,
for some $C_*>0$ depending only on $\Theta$,
\[
 T^{-1}G_tT^{-\top}
 \preceq C_*\left(I_t+\sum_{s=1}^t w_s^2\,ee^\top\right).
\]
Taking determinants and using $I_t\succeq I_{m+1}$ and $\det T=g$ give
\[
 \det G_t\le C_*^{m+1}g^2\det I_t
       \left(1+\sum_{s=1}^t w_s^2\,e^\top I_t^{-1}e\right)
 \le C(t+1)^3\det I_t
 \qquad\text{on }\Omega,
\]
where $C$ depends only on $\Theta,W$. This proves~\eqref{eq:necessdetcompare}.
\end{proof}

For $1\le k\le t\wedge m$, define
\begin{equation}\label{eq:necessminor}
 \mathscr D_{t,k}=\det\begin{pmatrix}
 f_1(y_t)&\cdots&f_k(y_t)&0\\
 f_1(y_{t-1})&\cdots&f_k(y_{t-1})&y_t\\
 \vdots&&\vdots&\vdots\\
 f_1(y_{t-k})&\cdots&f_k(y_{t-k})&y_{t-k+1}
 \end{pmatrix}.
\end{equation}
For $\lambda>0$, form a matrix with the rows of $I_{m+1}$,
the rows $\psi_j^\top$ for $0\le j<t$, and the row
$\sqrt\lambda\,a_t^\top$. Its Gram matrix is $G_t+\lambda a_ta_t^\top$.
In the Cauchy--Binet formula, select $\sqrt\lambda\,a_t^\top$,
$\psi_{t-k}^\top,\ldots,\psi_{t-1}^\top$, and the unit rows in
coordinates $k+1,\ldots,m$. Expanding along the unit rows gives
$\lambda|\mathscr D_{t,k}|^2$ as one term. All other terms are
nonnegative, so the matrix determinant lemma gives
\[
 \det G_t\bigl(1+\lambda a_t^\top G_t^{-1}a_t\bigr)
 =\det(G_t+\lambda a_ta_t^\top)
 \ge\lambda|\mathscr D_{t,k}|^2.
\]
Dividing by $\lambda$ and letting $\lambda\to\infty$, then applying
Lemma~\ref{lem:necessprediction}, yields, on $\Omega$,
\begin{equation}\label{eq:necessbridge}
 M_t\ge a_t^\top G_t^{-1}a_t
 \ge\frac{|\mathscr D_{t,k}|^2}{\det G_t}
 \ge\frac{|\mathscr D_{t,k}|^2}{C(t+1)^3\Delta_t^2}.
\end{equation}

\subsection{Determinant estimates}\label{app:determinants}
We control cancellation in the leading terms of $\mathscr D_{t,k}$.
\begin{lemma}\label{lem:necessgeometry}
For each $2\le k\le m$, there exist $K>1$ and $\rho_k>0$, depending
only on $k,b_1,\ldots,b_k$, with the following property.
Let $s_i\in\{-1,1\}$, $0\le i\le k$, and
$z_0\ge Kz_1\ge\cdots\ge K^kz_k>0$.
Write $\mathscr D=\left.\mathscr D_{t,k}\right|_{y_{t-i}=s_i z_i,\;0\le i\le k}$
with $\mathscr D_{t,k}$ defined in~\eqref{eq:necessminor}, and set
\begin{equation}\label{eq:necessmonomials}
 M_a^*=z_{a-1}\prod_{i=1}^a z_{i-1}^{b_i}
                    \prod_{i=a+1}^k z_i^{b_i},\qquad
 \mathscr M=\sum_{a=1}^kM_a^*.
\end{equation}
With $\partial_{\log z_i}=z_i\partial_{z_i}$, we have
\begin{equation}\label{eq:necessnondegenerate}
 \left|\frac{\mathscr D}{\mathscr M}\right|
 +\sum_{i=0}^{k-2}\left|\partial_{\log z_i}
 \frac{\mathscr D}{\mathscr M}\right|\ge\rho_k.
\end{equation}
\end{lemma}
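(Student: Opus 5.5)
We reduce to a compactness argument in logarithmic coordinates. Put $u_i=\log(z_{i-1}/z_i)\ge\log K$ for $1\le i\le k$. Expand $\mathscr D$ as a signed sum of monomials, one for each permutation choice. The first row has a zero in the last column, so the last column selects some row $a\ge1$; we pair the power columns with the remaining rows and obtain the factor $z_{a-1}$. After dividing by $\mathscr M$, every term equals $\pm\prod_i z_i^{\text{exponent}}$ divided by a sum of the $M_a^*$. We then split the terms as follows. For each fixed $a$, the rearrangement argument from the proof of Lemma~\ref{lem:cramer} shows that the pairing of $b_1>\cdots>b_k$ with the remaining rows in decreasing order of magnitude is uniquely maximal, and it produces exactly $M_a^*$. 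Every other term is smaller than some $M_a^*$ by a factor $e^{-\delta(u_1+\cdots+u_k)}$, with $\delta>0$ depending only on $b$. Hence
\[
 \frac{\mathscr D}{\mathscr M}=\frac{\sum_{a=1}^k\epsilon_aM_a^*}{\sum_{a=1}^kM_a^*}+O(K^{-\delta}),
\]
where the signs $\epsilon_a\in\{-1,1\}$ come from the $s_i$ and the cofactor signs. The same expansion controls the derivatives $\partial_{\log z_i}$ of these error terms, because each such derivative only multiplies a monomial by a bounded exponent.

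The main term $\Phi=\sum_a\epsilon_a\pi_a$, with weights $\pi_a=M_a^*/\mathscr M$, is a convex combination of signs. The ratios of the weights are
\[
 \frac{M_{a+1}^*}{M_a^*}=\frac{z_a}{z_{a-1}}\Bigl(\frac{z_a}{z_{a+1}}\Bigr)^{b_{a+1}}=\exp\{-u_a+b_{a+1}u_{a+1}\},
\]
so $\log\pi_a$ is affine in $\log z$. If $|\Phi|$ is small, at least two weights with opposite signs must be comparable. The aim is to show that some derivative $\partial_{\log z_i}\Phi$ with $i\le k-2$ is then bounded below. We have $\partial\Phi=\sum_a\epsilon_a\pi_a(\partial\log M_a^*-\overline{\partial\log M^*})$, where the bar denotes the $\pi$-weighted mean. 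Note that $\partial_{\log z_i}\log M_a^*$ equals $b_{i+1}$ for $i<a-1$, equals $1+b_i$ or $b_{i+1}+1$ near the switch index, and equals $b_i$ for $i\ge a$. Consequently, distinct $a$ give distinct exponent vectors restricted to the coordinates $0,\ldots,k-2$, because the switch points differ and $b_i\ne b_{i+1}$. By itself this distinctness does not give a lower bound, so we argue by contradiction.

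Suppose \eqref{eq:necessnondegenerate} fails along a sequence of configurations. After extracting a subsequence, the weight vector $\pi$ converges, and the set of $a$ with nonvanishing limiting weight is nonempty. The limits satisfy $\sum_a\epsilon_a\pi_a=0$ and $\sum_a\epsilon_a\pi_a(c_a-\bar c)=0$, where $c_a$ is the exponent vector of $M_a^*$ in coordinates $0,\ldots,k-2$. The key step is to show that these linear relations are incompatible with the structure of the $c_a$, namely that the $c_a$ change in only one or two adjacent coordinates between consecutive values of $a$. The plan is to take the smallest active index $a_0$ and pick a coordinate $i\le k-2$ in which $c_{a_0}$ differs from all the others in a monotone way. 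In that coordinate, the derivative is a strictly one-signed combination weighted by $\epsilon_a$. Working out this combinatorial sign bookkeeping for $k\ge2$ is the step I expect to be the main obstacle; the case $k=2$, where only $i=0$ is available, should be checked first by hand.

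For the error terms, the derivatives are also $O(K^{-\delta})$, so choosing $K$ large makes them at most $\rho_k/2$. The constant $\rho_k$ is half of the compactness infimum. This infimum is over a compact set of possible limiting weight configurations and finitely many sign patterns, so it depends only on $k$ and $b_1,\ldots,b_k$.
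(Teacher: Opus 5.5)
Your reduction is sound and parallels the paper's. The dominant terms are $\epsilon_aM_a^*$ with fixed signs, and the remainder and its $\log$-derivatives are $O(K^{-\eta})\mathscr M$. A positive lower bound for the normalized main term over the closed simplex of weights $\pi$, with finitely many sign patterns, would then finish the proof. (Your remainder factor $e^{-\delta(u_1+\cdots+u_k)}$ is too strong: for $k=2$, $a=2$, the other term is smaller than $M_2^*$ only by $e^{-(b_1-b_2)u_1}$. The bound $O(K^{-\eta})$ relative to $\mathscr M$ still holds, since each $u_i\ge\log K$.)

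The genuine gap is that the step carrying the whole lemma is left open. You must rule out a zero of the limiting relations, and you aim at this through sign bookkeeping, which is the wrong lens. Since $\sum_a\epsilon_a\pi_a=0$, the derivative relations reduce to $\sum_a\epsilon_a\pi_ac_a=0$. So the question is purely whether the $k$ vectors $(1,c_a)\in\R^k$ are linearly independent; the signs play no role. Your exponent table is also wrong where it matters. Row $a+1$ supplies the last-column entry, so $z_a$ does not appear in $M_a^*$ at all. Precisely, $c_{a,p}=b_{p+1}$ for $p<a-1$, $c_{a,a-1}=b_a+1$, $c_{a,a}=0$, and $c_{a,p}=b_p$ for $p>a$.

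With the correct table, your smallest-active-index idea closes at once. Put $x_a=\epsilon_a\pi_a$ and suppose $x_a=0$ for $a<a_0$. If $a_0\le k-1$, take $p=a_0-1\le k-2$. Then $c_{a_0,p}=b_{a_0}+1$ while $c_{a,p}=b_{a_0}$ for every $a>a_0$, so $0=\sum_ax_ac_{a,p}=x_{a_0}+b_{a_0}\sum_ax_a=x_{a_0}$. If $a_0=k$, then $x_k=\sum_ax_a=0$. Hence $x=0$, contradicting $\sum_a\pi_a=1$, and your compactness infimum is positive.

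The paper uses exactly this triangular structure, but explicitly. It writes $\omega_1=(\partial_{\log z_0}-b_1)P$ and $\omega_{i+1}=(\partial_{\log z_i}-b_{i+1})P+b_{i+1}\omega_i-(b_i-b_{i+1})\sum_{a<i}\omega_a$, with $\omega_k=P-\sum_{a<k}\omega_a$. This yields $\mathscr M\le C_{k,b}\bigl(|P|+\sum_{i\le k-2}|\partial_{\log z_i}P|\bigr)$ with no compactness argument. The quotient rule and $0\le\partial_{\log z_i}\log\mathscr M\le b_1+1$ then transfer the bound to $\mathscr D/\mathscr M$. As written, however, your proposal does not prove the lemma, because this linear-independence step is missing.
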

\begin{proof}
We control cancellation of the leading sum through its derivatives.
Expand $\mathscr D$ along the last column in~\eqref{eq:necessminor}.
For row $a+1$, pairing $z_0,\ldots,z_k$ except $z_a$ with
$b_1,\ldots,b_k$ in decreasing order, then multiplying by $z_{a-1}$,
gives $M_a^*$. Any other permutation has an inverted pair. Correcting
it increases the magnitude by $(z_p/z_q)^{b_i-b_j}\ge K^\eta$,
where $p<q$, $i<j$, and $\eta=\min_{i<k}(b_i-b_{i+1})$.
Each of the $k!-1$ other products is therefore at most
$K^{-\eta}M_a^*$. Summing gives
\[
 \mathscr D=\sum_{a=1}^k\epsilon_a M_a^*+E,
 \qquad \epsilon_a\in\{-1,1\},\qquad
 |E|\le(k!-1)K^{-\eta}\mathscr M.
\]
Here $E$ collects the remaining terms, and the signs $\epsilon_a$ are
fixed as $z_i$ vary. Since every variable exponent lies in $[0,b_1+1]$,
\[
 \sum_{i=0}^{k-2}|\partial_{\log z_i}E|
 \le(k-1)(b_1+1)(k!-1)K^{-\eta}\mathscr M.
\]
Put $\omega_a=\epsilon_aM_a^*$ for $1\le a\le k$ and $P=\sum_{a=1}^k\omega_a$.
The different exponents let us recover each $\omega_a$ from $P$ and
its derivatives. Direct differentiation gives
\[
 \begin{aligned}
 \omega_1&=(\partial_{\log z_0}-b_1)P,\\
 \omega_{i+1}&=(\partial_{\log z_i}-b_{i+1})P+b_{i+1}\omega_i
       -(b_i-b_{i+1})\sum_{a=1}^{i-1}\omega_a,\quad1\le i\le k-2.
 \end{aligned}
\]
Using these relations successively and $\omega_k=P-\sum_{a=1}^{k-1}\omega_a$ gives
\[
 \mathscr M=\sum_{a=1}^k|\omega_a|
 \le C_{k,b}\left(|P|+\sum_{i=0}^{k-2}|\partial_{\log z_i}P|\right),
\]
where $C_{k,b}\ge1$ depends only on $k,b_1,\ldots,b_k$.
For sufficiently large $K$, the two error bounds therefore give
$|\mathscr D|+\sum_{i=0}^{k-2}|\partial_{\log z_i}\mathscr D|
\ge\mathscr M/(2C_{k,b})$.
Finally, $0\le\partial_{\log z_i}\log\mathscr M\le b_1+1$, so the identity
\[
 \frac{\partial_{\log z_i}\mathscr D}{\mathscr M}
 =\partial_{\log z_i}\!\left(\frac{\mathscr D}{\mathscr M}\right)
  +\frac{\mathscr D}{\mathscr M}\,
       \partial_{\log z_i}\log\mathscr M
\]
gives~\eqref{eq:necessnondegenerate} by summing absolute values.
\end{proof}

Choose one $K$ for all $2\le k\le m$. The same exchange estimate
allows us to enlarge $K$ so that, for every $1\le r\le m$ and
$1\le i_1<\cdots<i_r\le m$, outputs ordered by magnitude with
$|y_{s_\ell}|\ge K|y_{s_{\ell+1}}|$ for $1\le\ell<r$ and $|y_{s_r}|>0$
satisfy
\[
 \left|\det\bigl(f_{i_j}(y_{s_\ell})\bigr)_{\ell,j=1}^r\right|
 \ge\frac12\prod_{\ell=1}^r|y_{s_\ell}|^{b_{i_\ell}}.
\]
One $K$ suffices for the finitely many exponent subsets and permutations.
For $m=1$, take any $K>1$.

\subsection{A noncancellation event}
The next lemma converts a lower bound on a function and its derivatives
into a lower bound in terms of distances from finite sets.

\begin{lemma}\label{lem:centers}
Let $\mathcal G_\lambda(x)$ be one fixed finite expression in real variables
$(\lambda,x)$ using arithmetic operations, $\exp$, and $\log$.
For each $\lambda\in\Lambda$, assume it is real-valued and $C^1$ near
$[-1,1]^q$, with
\begin{equation}\label{eq:centerhyp}
 |\mathcal G_\lambda(x)|+\sum_{i=1}^q|\partial_i \mathcal G_\lambda(x)|\ge\rho>0
 \qquad \lambda\in\Lambda,\quad x\in[-1,1]^q.
\end{equation}
There exist an integer $N\ge1$ and Borel measurable functions
$c_{i,j}(\lambda,x_1,\ldots,x_{i-1})\in[-1,1]$,
$1\le i\le q$, $1\le j\le N$, such that, writing
$\mathcal C_i=\{c_{i,1},\ldots,c_{i,N}\}$ with repetitions allowed,
\begin{equation}\label{eq:centerineq}
 |\mathcal G_\lambda(x)|\ge\frac{\rho}{4^q q!}
       \prod_{i=1}^q\operatorname{dist}
           (x_i,\mathcal C_i(\lambda,x_1,\ldots,x_{i-1})).
\end{equation}
\end{lemma}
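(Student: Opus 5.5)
We argue by induction on the dimension $q$, one coordinate at a time. The intended mechanism is a one-dimensional principle. If a $C^1$ function $h$ on $[-1,1]$ satisfies $|h|+|h'|\ge\rho$ and $h$ comes from a fixed finite elementary expression, then $h$ has finitely many zeros. Moreover, $|h(x)|$ grows at least linearly in the distance from those zeros, with slope comparable to $\rho$, wherever $|h|$ is below $\rho/2$. The number of zeros must be bounded uniformly in $\lambda$. For this we plan to use that $\mathcal G_\lambda$ is a fixed expression in arithmetic operations, $\exp$ and $\log$. Such expressions define functions that are definable in the o-minimal structure $\R_{\exp}$, or equivalently Pfaffian functions in Khovanskii's sense. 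Hence the number of connected components of $\{x_i:\ \mathcal G_\lambda(x)=0\}$ and of $\{x_i:\ \partial_i\mathcal G_\lambda(x)=0\}$, for fixed $(\lambda,x_{<i})$, is bounded by a constant $N$ independent of the parameters. This uniform finiteness is what produces a fixed integer $N$ together with centers $c_{i,j}$. The Borel measurability of the centers is handled by definable choice, or simply by ordering the finitely many definable zeros: the $j$th smallest zero is a definable, hence Borel, function of the parameters, padded by repetition.

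The induction splits $[-1,1]^q$ according to which quantity dominates in \eqref{eq:centerhyp}. For fixed $(\lambda,x_1,\dots,x_{i-1})$, define the centers $\mathcal C_i$ as the zeros in $[-1,1]$ of the auxiliary functions built at stage $i$, together with their endpoints. At stage $i$ these are the zeros of $\partial_i$-type expressions and of $\mathcal G$ with the remaining variables frozen in a controlled way. We keep track of the quantity
\[
 \Psi_i(x)=|\mathcal G_\lambda(x)|+\sum_{l>i}|\partial_l\mathcal G_\lambda(x)|,
\]
and prove by downward reduction that
\[
 \Psi_{i}(x)\ge \frac{1}{4i}\,\Psi_{i-1}(x)\,\operatorname{dist}(x_i,\mathcal C_i).
\]
A mean value argument along the $x_i$ segment gives this. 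On each interval between consecutive centers, the relevant function is monotone and has no zero. Consequently, where $|\partial_i\mathcal G|$ is large, $|\mathcal G|$ grows linearly away from its zero, and where $|\mathcal G|$ is large, no loss occurs. Chaining the $q$ inequalities from $\Psi_0\ge\rho$ produces the product of distances. It also produces the factor $4^{-q}/q!$, which arises from splitting the sum into at most $q$ comparable pieces and taking the largest at each stage.

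The main obstacle is the one-variable step with parameters. To pass from ``$|h|+|h'|\ge\rho$ on an interval with no interior center'' to ``$|h(x)|\ge c\rho\operatorname{dist}(x,\text{centers})$'', we need $h'$ to keep a sign and stay large wherever $h$ is small. Otherwise $h$ could hover near zero while $h'$ oscillates. We will first try to include the zeros of $h''$ among the centers as well. Then on each subinterval $h'$ is monotone. Combined with $|h|+|h'|\ge\rho$, this yields the linear lower bound with constant $\rho/4$, and the uniform bound on the number of such zeros again comes from o-minimality. The remaining bookkeeping is to check that the derivative terms for later coordinates survive each reduction. This is why $\Psi_i$ retains them.
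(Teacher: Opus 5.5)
There is a genuine gap in the multidimensional step.

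\textbf{The pointwise chaining inequality is false.} Your step $\Psi_i(x)\ge\frac1{4i}\Psi_{i-1}(x)\operatorname{dist}(x_i,\mathcal C_i)$ compares with $\Psi_{i-1}$ evaluated at the same point. With a uniformly bounded number of centers this fails. Already for $q=1$, take $\mathcal G_\lambda(x)=1+e^{\lambda x}$, $\Lambda=[1,\infty)$ and $\rho=1$. For $x\in[0,1]$ one has $|\mathcal G_\lambda(x)|\le\frac{2}{1+\lambda}\Psi_0(x)$. Your inequality then forces $\operatorname{dist}(x,\mathcal C_1)\le 8/(1+\lambda)$ on all of $[0,1]$, so $N\ge(1+\lambda)/16$. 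Your one-variable principle is correct only in the form ``slope proportional to a \emph{global} lower bound for $|h|+|h'|$.'' So at stage $i$ the comparison quantity must not depend on $x_i$, nor on $x_{i+1},\dots,x_q$.

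\textbf{The centers cannot depend on the later variables.} This is the more basic problem. $\mathcal C_i$ may depend only on $(\lambda,x_1,\dots,x_{i-1})$. But the zeros of $x_i\mapsto\mathcal G_\lambda(x)$ or $x_i\mapsto\partial_i\mathcal G_\lambda(x)$ move with $x_{i+1},\dots,x_q$. Their union over those variables is typically a whole interval; for $\mathcal G=x_1-x_2$ it is all of $[-1,1]$. Freezing the remaining variables ``in a controlled way'' is exactly the missing idea. A mean value argument along the straight $x_i$-segment, with the later variables fixed, cannot supply it.

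\textbf{What the paper does instead.} It replaces $\Psi_i$ by
\[
Q_i(\lambda,x_1,\dots,x_i)=\min_{x_{i+1},\dots,x_q\in[-1,1]}\Psi_i ,
\]
so that $Q_0\ge\rho$ and $Q_q=|\mathcal G_\lambda|$. It then proves
\[
Q_i\ge\frac{Q_{i-1}}{4(q-i+1)}\operatorname{dist}(x_i,\mathcal C_i).
\]
The construction of the centers goes as follows.
\begin{itemize}
\item Definable choice gives a minimizer $\zeta(\lambda,x_1,\dots,x_i)$.
\item The monotonicity theorem partitions $[-1,1]$ in $u=x_i$ into intervals. On each, $Q_i$ and every coordinate of $\zeta$ are $C^1$ and monotone, and $Q_i-a/2$ has constant sign, where $a=Q_{i-1}$.
\item Uniform finiteness bounds the number of partition endpoints, and these endpoints are the centers.
\end{itemize}

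Now suppose $Q_i(u)<a/2$. On the interval containing $u$, $|\partial_i\mathcal G_\lambda|>a/2$ along the curve $v\mapsto(v,\zeta(v))$. Move toward the endpoint in the direction where $Q_i$ does not increase. Suppressing $\lambda,x_1,\dots,x_{i-1}$, the chain rule gives
\[
 \frac a2|u-v|\le|\mathcal G_\lambda(u,\zeta(u))-\mathcal G_\lambda(v,\zeta(v))|
 +\sum_{j>i}\int_{\min(u,v)}^{\max(u,v)}|\partial_j\mathcal G_\lambda(s,\zeta(s))|\,|\zeta_j'(s)|\dd s
 \le2(q-i+1)Q_i(u).
\]
The last bound holds because $|\mathcal G_\lambda|$ and the later partials are at most $Q_i(u)$ along the segment, and each monotone $\zeta_j$ has total variation at most $2$. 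This is where the later derivative terms are actually used, and it is the source of the factor $4(q-i+1)$. Your proposal never follows $\mathcal G_\lambda$ along a curve in the later variables, so it has no means of making the centers independent of them.

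As a side remark, zeros of $h''$ are unnecessary in one variable. On a component of $\{|h|<\rho/2\}$ one has $|h'|>\rho/2$, so $h'$ cannot change sign there.
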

\begin{proof}
For $0\le i\le q$ define
\[
 Q_i(\lambda,x_1,\ldots,x_i)
 =\min_{x_{i+1},\ldots,x_q\in[-1,1]}
   \left\{|\mathcal G_\lambda(x)|+\sum_{j=i+1}^q|\partial_j \mathcal G_\lambda(x)|\right\}.
\]
The minima are attained by compactness, and $Q_0\ge\rho$,
$Q_q=|\mathcal G_\lambda|$. It suffices to prove
\[
 Q_i(\lambda,x_1,\ldots,x_i)
 \ge\frac{Q_{i-1}(\lambda,x_1,\ldots,x_{i-1})}{4(q-i+1)}
           \operatorname{dist}(x_i,\mathcal C_i).
\]

On the full set of parameters satisfying the hypotheses, the functions
$Q_i$ and their sets of minimizers are definable in the real exponential
structure, which is o-minimal~\cite[Theorem~5.2]{Wilkie2009}.
Definable choice~\cite[Theorem~3.1]{Coste1999} gives a minimizer
$\zeta(\lambda,x_1,\ldots,x_i)\in[-1,1]^{q-i}$ for $Q_i$.

For each $i$, write $u=x_i$ and abbreviate
$a=Q_{i-1}(\lambda,x_1,\ldots,x_{i-1})$.
The monotonicity and decomposition theorems
\cite[Theorems~2.1, 6.6, and 6.7]{Coste1999} give a definable family
of finite partitions of $[-1,1]$ into points and intervals. On each
interval, $Q_i$ and each coordinate of $\zeta$ are $C^1$ and monotone
in $u$, and $Q_i-a/2$ has constant sign.
The partition endpoints form finite definable sets depending on
$(\lambda,x_1,\ldots,x_{i-1})$.
Uniform finiteness~\cite[Theorem~2.9]{Coste1999} bounds their cardinalities
by one integer $N$ for all parameters and $i$.
Take these endpoints as $\mathcal C_i$ when $a>0$, and take
$\mathcal C_i=\{-1,1\}$ when $a=0$.
Ordering the endpoints and repeating entries gives definable, hence Borel,
functions $c_{i,1},\ldots,c_{i,N}$.

Fix $\lambda,x_1,\ldots,x_{i-1}$ and write $Q_i(u)$ and $\zeta(u)$.
If $a=0$, $u\in\mathcal C_i$, or $Q_i(u)\ge a/2$, the desired bound
follows from $\operatorname{dist}(u,\mathcal C_i)\le2$.
Otherwise, on the interval containing $u$,
$|\partial_i\mathcal G_\lambda(s,\zeta(s))|\ge a-Q_i(s)>a/2$,
so this derivative has constant sign. Choose the endpoint $\xi$
in the direction in which $Q_i$ does not increase.
For $v$ between $u$ and $\xi$, both $|\mathcal G_\lambda|$ and each
$|\partial_j\mathcal G_\lambda|$, $j>i$, along the segment are bounded
by $Q_i(u)$. Each coordinate of $\zeta$ has total variation at most two.
Thus the chain rule gives
\begin{align*}
 \frac a2|u-v|
 &\le |\mathcal G_\lambda(u,\zeta(u))-\mathcal G_\lambda(v,\zeta(v))|\\
 &\quad+\sum_{j=i+1}^q\int_{\min(u,v)}^{\max(u,v)}
       |\partial_j\mathcal G_\lambda(s,\zeta(s))|\,|\zeta'_j(s)|\,\dd s
 \le2(q-i+1)Q_i(u).
\end{align*}
Since $\xi\in\mathcal C_i$, letting $v\to\xi$ gives
$Q_i(u)\ge\frac{a|u-\xi|}{4(q-i+1)}
\ge\frac{a}{4(q-i+1)}\operatorname{dist}(u,\mathcal C_i)$.
Iterating over $i=1,\ldots,q$, with $Q_0\ge\rho$ and
$Q_q=|\mathcal G_\lambda|$, proves~\eqref{eq:centerineq} on $\Lambda$.
\end{proof}

\begin{lemma}\label{lem:necessgood}
There are constants $0<\delta\le1/2$ and $c>0$, depending only on
$m,b,\Theta,W$, such that every finite Borel feedback and deterministic
initial state admits an event $\mathcal D\subset\Omega$ with
$\P(\mathcal D)\ge p_0/2$. On this event, both estimates below hold simultaneously.
For all $s\ge1$,
\begin{equation}\label{eq:necesszeroevent}
 Y_s\ge\delta s^{-2}\sqrt{M_{s-1}}.
\end{equation}
For every $t\ge1$ at which the outputs satisfy~\eqref{eq:necessseparation},
\begin{equation}\label{eq:necessminorlower}
 |\mathscr D_{t,t\wedge m}|\ge
 \frac{c\mathscr M}{(t+1)^{2[(t\wedge m)-1]}(2+\log Y_t)^{3[(t\wedge m)-1]}}
 \prod_{s=t-(t\wedge m)+2}^t\frac{\sigma_{s-1}}{Y_s},
\end{equation}
where $\mathscr M$ is given by~\eqref{eq:necessmonomials} with
$k=t\wedge m$ and $z_j=Y_{t-j}$.
For $t\wedge m=1$, $\mathscr M=Y_t^{b_1+1}$ and the product is one.
\end{lemma}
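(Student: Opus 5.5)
The plan is to build $\mathcal D$ as the intersection of $\Omega$ with two "small-ball" good events, one for each estimate. Each good event will have complement probability at most $p_0/4$, so that $\P(\mathcal D)\ge p_0-p_0/4-p_0/4=p_0/2$. Both estimates use only the conditional law of $y_s$ given $\F_{s-1}$. In the auxiliary model this law is Gaussian, $N(\mu_{s-1},\sigma_{s-1}^2)$, with $\mu_{s-1}^2+\sigma_{s-1}^2=M_{s-1}$, by \eqref{eq:necessposterior}.

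\textbf{The estimate \eqref{eq:necesszeroevent}.} A Gaussian with second moment $M$ places mass at most $O(\epsilon)$ in $\{|y|\le\epsilon\sqrt M\}$. This holds because its variance is at least one and the density is bounded by $1/(\sqrt{2\pi}\sigma)$. More carefully, if $|\mu|\ge\sigma$, the ball of radius $\epsilon\sqrt M\le2\epsilon|\mu|$ around zero is far from the mean. So $\P(Y_s<\delta s^{-2}\sqrt{M_{s-1}}\mid\F_{s-1})\le C\delta s^{-2}$. Summing over $s$ gives total probability $\le C'\delta$, and choosing $\delta$ small in terms of $p_0$ makes this at most $p_0/4$. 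Since $p_0$ depends only on $\Theta,W$, so does $\delta$. Nonvanishing of all outputs also follows, with probability one.

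\textbf{The estimate \eqref{eq:necessminorlower}.} Fix $t$ and $k=t\wedge m$. I condition on $\F_{t-k+1}$, so that $y_{t-k},y_{t-k+1}$ are fixed, and reveal $y_{t-k+2},\dots,y_t$ successively. Write $y_s=\mu_{s-1}+\sigma_{s-1}\xi_s$, where the $\xi_s$ are conditionally standard normal. Lemma~\ref{lem:necessgeometry} gives the nondegeneracy \eqref{eq:necessnondegenerate} of $\mathscr D/\mathscr M$ in the log-variables $\log z_0,\dots,\log z_{k-2}$, that is, in $\log Y_t,\dots,\log Y_{t-k+2}$. This is exactly $k-1$ variables. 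After rescaling each log-variable to a unit interval, I apply Lemma~\ref{lem:centers} with $q=k-1$, where $\lambda$ collects the signs, the ratios, and the earlier variables. This gives
\[
|\mathscr D|/\mathscr M\ge c\prod_{i}\operatorname{dist}(x_i,\mathcal C_i),
\]
with at most $N$ centers per coordinate, each center depending measurably on the previously revealed coordinates. Ordering the revelation from $y_{t-k+2}$ up to $y_t$ matches the triangular dependence in Lemma~\ref{lem:centers}, which requires reindexing so that $x_1\leftrightarrow\log Y_{t-k+2}$. The range of $\log Y_s$ is bounded on $\Omega$ by roughly $\log(2+\log Y_t)$ and polynomial factors in $t$, so the unit-interval rescaling costs the factors $(2+\log Y_t)^{3}$. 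Conditionally, each $x_i$ is then the log of a Gaussian with standard deviation $\sigma_{s-1}$. Its density near a center $x$ is about $e^{x}/\sigma_{s-1}$, so
\[
\P\bigl(\operatorname{dist}(x_i,\mathcal C_i)<\epsilon_s\bigr)\le N\epsilon_s Y_s/\sigma_{s-1}.
\]
Choosing $\epsilon_s=c(t+1)^{-2}\sigma_{s-1}/Y_s$, with $Y_s$ replaced by a deterministic envelope, the bad-event probabilities sum over $t$ and $s$ to something at most $p_0/4$. Multiplying the distance bounds yields \eqref{eq:necessminorlower}. The separation hypothesis \eqref{eq:necessseparation} is needed so that Lemma~\ref{lem:necessgeometry} applies. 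Handling its being random (it is checked along the revealed path) is done by intersecting with it at each step. The case $k=1$ is immediate from the dominant-term bound stated after Lemma~\ref{lem:necessgeometry}.

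\textbf{Main obstacle.} The delicate step is making the small-ball argument rigorous. The centers depend on the revealed future coordinates and on the unbounded $Y_s$ appearing in $\epsilon_s$. The target $\sigma_{s-1}/Y_s$ is random, so the bad event must be defined as $\{\operatorname{dist}<c(t+1)^{-2}\sigma_{s-1}/Y_s\}$. Bounding its probability then needs the log-change of variables, whose Jacobian exactly cancels $Y_s$. I expect this cancellation, together with uniformity of $N$ and $\rho_k$ from the o-minimality, to be where the care is needed.
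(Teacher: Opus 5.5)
Your treatment of \eqref{eq:necesszeroevent} matches the paper: a conditional small-ball bound $\P\{|y_s|<\varepsilon\sqrt{M_{s-1}}\mid\F_{s-1}\}\le C_0\varepsilon$ with $\varepsilon=\delta s^{-2}$, summed over $s$. The architecture you propose for \eqref{eq:necessminorlower} is also the paper's. You reveal $y_{t-k+2},\dots,y_t$ in order, apply Lemma~\ref{lem:centers} with $q=k-1$, force each output to avoid its centers at a radius proportional to $\sigma_{s-1}(t+1)^{-2}$, and multiply the distance bounds.

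There is, however, a genuine gap where you rescale the log-variables to a unit interval. Nothing on $\Omega$ bounds the outputs: $\Omega$ controls only $\vartheta$ and the noise, and the feedback is arbitrary, so there is no deterministic envelope. Under \eqref{eq:necessseparation} the relevant log-range is $[0,\log Y_t]$, which is of order $\log Y_t$, not $\log(2+\log Y_t)$. The rescaling factor therefore depends on $Y_t$, the last output revealed. The function fed to Lemma~\ref{lem:centers} depends on this scale, and hence so do its centers. For $s<t$ the centers for $y_s$ are therefore not $\F_{s-1}$-measurable, and the conditional Gaussian small-ball bound cannot be applied to them. Your claim that each center depends measurably on the previously revealed coordinates alone fails at exactly this point.

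The missing idea is to make the scale a parameter and pay for it with a union bound. The paper writes $Y_{s_i}=KY_{s_i-1}e^{n(1+x_i)}$ with an integer $n\ge1$ placed in $\lambda$. Using log-increments rather than $\log Y_s$ itself also matters. Every point of $[-1,1]^q$ is then a separated configuration, so \eqref{eq:necessnondegenerate} transfers by the chain rule (using $n\ge1$) to \eqref{eq:centerhyp} on the whole box. With rescaled $\log Y_s$, the box would contain unseparated points where Lemma~\ref{lem:necessgeometry} says nothing.

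One then builds $\F_{s_i-1}$-measurable centers $\Xi_{t,n,i}$ for every $n$. One requires $|y_{s_i}-\xi|\ge\delta\sigma_{s_i-1}/[4(t+1)^2n^2]$ for all $t,n,i$ simultaneously, which is summable in $t$ and $n$. Only afterwards does one choose $n=\lceil1+\log Y_t\rceil$.

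This also removes the obstacle you flagged. The bad event is defined in the $y$-coordinate with a predictable radius $\varepsilon_i$. Then \eqref{eq:necesszeroevent} gives $\varepsilon_i\le Y_{s_i}/4$, so $\operatorname{dist}(x_i,\mathcal C_i)\ge n^{-1}\log(1+\varepsilon_i/Y_{s_i})\ge\varepsilon_i/(2nY_{s_i})$. The factor $n^{-2}$ from the radius and the factor $n^{-1}$ from this conversion give $n^{-3}\ge(2+\log Y_t)^{-3}$ per coordinate. That is where the exponent $3[(t\wedge m)-1]$ comes from. Your sketch states this factor without a mechanism that produces it; note also that your probability bound for $\operatorname{dist}(x_i,\mathcal C_i)<\epsilon_s$ omits the factor $n$ coming from the rescaling.
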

\begin{proof}
For each $t$ with $k=t\wedge m\ge2$, put $q=k-1$ and $s_i=t-q+i$,
$1\le i\le q$. Fix the two oldest magnitudes
$Y_{t-k+1}\ge KY_{t-k}>0$ and the output signs.
For real $n\ge1$ and $x\in[-1,1]^q$, set
\[
 Y_{s_i}=KY_{s_i-1}e^{n(1+x_i)},\qquad
 F(x)=\frac{\mathscr D}{\mathscr M},\qquad z_j=Y_{t-j}.
\]
The parameters are $n$ and the two oldest magnitudes, with finitely many
choices of $k$ and the signs. Since $z^b=\exp(b\log z)$ and
$\mathscr M>0$, this is a smooth family of the form required by
Lemma~\ref{lem:centers}. The chain rule gives
\[
 \begin{aligned}
 n\partial_{\log Y_{s_i}}F
 &=\partial_{x_i}F-\partial_{x_{i+1}}F,\qquad 1\le i<q,\\
 n\partial_{\log Y_{s_q}}F&=\partial_{x_q}F.
 \end{aligned}
\]
Summing absolute values and using $n\ge1$ in~\eqref{eq:necessnondegenerate}
gives $|F|+\sum_i|\partial_{x_i}F|\ge\rho_k/2$.
Lemma~\ref{lem:centers} gives sets $\mathcal C_i$ satisfying
\eqref{eq:centerineq} with $\rho=\rho_k/2$ and cardinalities bounded
uniformly over this parameter family.
For integer $n\ge1$, let $\Xi_{t,n,i}$ collect the centers
\[
 \xi=\epsilon_iKY_{s_i-1}e^{n(1+\xi_i)},\qquad
 \xi_i\in\mathcal C_i,\quad \epsilon_i\in\{-1,1\},
\]
over all sign patterns. Set $\Xi_{t,n,i}=\varnothing$ if the oldest pair
is not positive and separated or the preceding coordinates are undefined
or outside the box. These sets satisfy $|\Xi_{t,n,i}|\le N_{m,b}$ uniformly
in $t,n,i$ and the output history. Their centers are
$\F_{s_i-1}$-measurable because $\mathcal C_i$ depends only on the oldest
magnitudes and the preceding coordinates.

For a normal variable $N$ of mean $\mu$ and variance $v^2>0$, the following
Gaussian estimates hold with an absolute constant $C_0$, for $\xi\in\R$
and $0<\varepsilon\le1/2$,
\begin{equation}\label{eq:smallballs}
 \P\{|N-\xi|<\varepsilon v\}\le C_0\varepsilon,\qquad
 \P\{|N|<\varepsilon\sqrt{\mu^2+v^2}\}\le C_0\varepsilon.
\end{equation}
The first bound follows from the maximum density. For the second,
standardize by $v$. When $|\mu|/v\le2$, the interval length is at
most $2\sqrt5\varepsilon$. Otherwise it is at most
$\sqrt5\varepsilon|\mu|/v$ and its distance from the standardized
mean is at least $2|\mu|/(5v)$. Density times length is then bounded by
$\sqrt{5/(2\pi)}\varepsilon(|\mu|/v)e^{-2\mu^2/(25v^2)}\le C_0\varepsilon$.

For $0<\delta\le1/2$, let $\mathcal E$ require~\eqref{eq:necesszeroevent}
for all $s\ge1$ and, for all $t,n,i$ above,
\begin{equation}\label{eq:necesscenterevent}
 |y_{s_i}-\xi|\ge\frac{\delta\sigma_{s_i-1}}{4(t+1)^2n^2}
 \qquad \xi\in\Xi_{t,n,i}.
\end{equation}
Apply~\eqref{eq:smallballs} conditionally and sum over at most
$(m-1)N_{m,b}$ centers for each $t,n$. The convergence of
$\sum_{s\ge1}s^{-2}$ and $\sum_{t,n\ge1}(t+1)^{-2}n^{-2}$ gives
$\P(\mathcal E^c)\le C_{m,b}\delta$.
Choose $\delta\le\min\{1/2,p_0/(2C_{m,b})\}$ and set
$\mathcal D=\Omega\cap\mathcal E$. Then
$\P(\mathcal D)\ge p_0-C_{m,b}\delta\ge p_0/2$.

On $\mathcal D$, assume~\eqref{eq:necessseparation} and set $k=t\wedge m$.
If $k=1$, the identity $|\mathscr D_{t,1}|=\mathscr M=Y_t^{b_1+1}$
proves~\eqref{eq:necessminorlower} with $c\le1$, including $m=1$.
For $k\ge2$, the event $\mathcal E$ covers all integers $n\ge1$,
so choose $n=\lceil1+\log Y_t\rceil$.
Then $\log(Y_{s_i}/Y_{s_i-1})-\log K\in[0,\log Y_t]$,
so $x_i\in[-1,1]$ and $n\le2+\log Y_t$.

Write $\varepsilon_i=\delta\sigma_{s_i-1}/[4(t+1)^2n^2]$.
Since $s_i\le t$ and $\sigma_{s_i-1}\le\sqrt{M_{s_i-1}}$,
\eqref{eq:necesszeroevent} gives
$\varepsilon_i\le Y_{s_i}/4$.
For each $\xi_i\in\mathcal C_i$, its output center with the actual sign
satisfies $|Y_{s_i}-|\xi||\ge\varepsilon_i$ by~\eqref{eq:necesscenterevent}.
Monotonicity of $\log$ gives
\[
 \operatorname{dist}(x_i,\mathcal C_i)
 \ge\frac1n\log\left(1+\frac{\varepsilon_i}{Y_{s_i}}\right)
 \ge\frac{\delta\sigma_{s_i-1}}{8(t+1)^2n^3Y_{s_i}}.
\]
Thus~\eqref{eq:centerineq}, with $q=k-1$, yields
\[
 |\mathscr D_{t,k}|\ge
 \frac{c\mathscr M}{(t+1)^{2q}n^{3q}}
 \prod_{s=t-q+1}^t\frac{\sigma_{s-1}}{Y_s}.
\]
The constant $c>0$ is uniform over $k\le m$.
Using $n\le2+\log Y_t$ proves~\eqref{eq:necessminorlower}.
\end{proof}

\begin{proof}[Proof of Lemma~\ref{lem:necesssample}]
In this proof, $c,C>0$ depend only on $m,b,\Theta,W$ and may change
between estimates.
Choose $K$ as in Section~\ref{app:determinants} and take the event
$\mathcal D$ supplied by Lemma~\ref{lem:necessgood}.
Since $M_{s-1}\ge \sigma_{s-1}^2\ge1$, \eqref{eq:necesszeroevent} ensures
$Y_s>0$ for every $s\ge1$.
Fix $t\ge0$ for which~\eqref{eq:necessseparation} holds.

Since $I_0=I_{m+1}$, we have $M_0\ge Y_0^{2b_1}$ and hence
$X_1\ge b_1X_0+\log\delta$ by~\eqref{eq:necesszeroevent}.
It follows directly from~\eqref{eq:necessresiduals} that
$R_{j,0},W_0\ge\log\delta$, which covers $t=0$ after increasing $C$.

For $t\ge1$, take $k=t\wedge m$ and $z_i=Y_{t-i}$
in~\eqref{eq:necessmonomials}. For $1\le a\le t\wedge m$, set
\[
 L_a(t)=\log M_a^*=\sum_{i=1}^a b_iX_{t-i+1}+X_{t-a+1}
                   +\sum_{i=a+1}^{t\wedge m}b_iX_{t-i}.
\]
Combining~\eqref{eq:necessbridge} with~\eqref{eq:necesszeroevent} yields
\[
 Y_{t+1}\ge c(t+1)^{-4}|\mathscr D_{t,t\wedge m}|/\Delta_t.
\]
Put $n=t-k+1$. Since $\Delta_s=\sigma_{s-1}\Delta_{s-1}$ and
$\sigma_{s-1}\le\delta^{-1}s^2Y_s$ by~\eqref{eq:necesszeroevent},
\begin{equation}\label{eq:necesslogH}
 \frac1{\Delta_t}\prod_{s=n+1}^t\frac{\sigma_{s-1}}{Y_s}
 =\frac1{\Delta_n\prod_{s=n+1}^tY_s}
 \ge\frac{\delta^n}{(n!)^2\prod_{s=1}^tY_s}.
\end{equation}
Insert~\eqref{eq:necessminorlower} and~\eqref{eq:necesslogH} into the bound
on $Y_{t+1}$. Taking logarithms and using
$\mathscr M=\sum_{a=1}^k e^{L_a(t)}$ and $n\le t$ yields
\begin{equation}\label{eq:necesscumulative}
 \sum_{s=1}^{t+1}X_s
 \ge\max_{1\le a\le t\wedge m}L_a(t)-C(t+1)\log(t+2)
                         -3(m-1)\log(2+X_t).
\end{equation}
For $j\le t$, we have $S_{t+1}-R_{j,t}=L_j(t)$.
For $j>t$, the zero values at negative times give
\[
 S_{t+1}-R_{j,t}-L_t(t)
 \le(b_{t+1}+1)X_0-X_1\le X_0-\log\delta.
\]
Since $S_{t+1}=X_0+\sum_{s=1}^{t+1}X_s$ and $0\le X_t\le S_t$
under~\eqref{eq:necessseparation},
\eqref{eq:necesscumulative} proves $R_{j,t}\ge-\ell_t$ after increasing $C$.

To bound $W_t$, put
\[
 U_t=\sum_{i=1}^{t\wedge m}b_iX_{t-i},\qquad t\ge0.
\]
With $\mathsf F=(I_m,f_0,\ldots,f_{t-1})^\top$, the drift Gram matrix
in~\eqref{eq:necessgrams} satisfies $B_t=\mathsf F^\top\mathsf F$.
Apply the Cauchy--Binet formula to this factorization.
Each selection of $m$ rows of $\mathsf F$ contains at most $t\wedge m$ data
rows. Expanding along the selected identity rows leaves a minor of
$(f_i(y_s))_{0\le s<t,\,1\le i\le m}$.
Rearrangement bounds every permutation term
by $e^{U_t}$. Terms with fewer rows have the same upper bound because
all $X_s\ge0$ and all exponents are positive. There are at most
$C_m(t+1)^m$ minors, so
\[
 \det B_t\le C_m(t+1)^m e^{2U_t}.
\]
For $r=(t+1)\wedge m$, the expansion of $\det B_{t+1}$ contains the
squared minor formed by $f_{t-r+1}^\top,\ldots,f_t^\top$ and the unit rows
in coordinates $r+1,\ldots,m$. The determinant bound in
Section~\ref{app:determinants} gives $\det B_{t+1}\ge\tfrac14e^{2U_{t+1}}$.
Since $B_{t+1}=B_t+f_tf_t^\top$, Lemma~\ref{lem:necessprediction} and
the matrix determinant lemma give
\[
 \sigma_t^2\ge1+f_t^\top B_t^{-1}f_t
 =\frac{\det B_{t+1}}{\det B_t}
 \ge c(t+1)^{-m}e^{2(U_{t+1}-U_t)}.
\]
Combine this with $Y_{t+1}\ge\delta(t+1)^{-2}\sigma_t$
from~\eqref{eq:necesszeroevent}. Taking logarithms and using
$U_{t+1}-U_t=\sum_{i=1}^m b_iD_{t-i+1}$ give
\[
 X_{t+1}\ge\sum_{i=1}^m b_iD_{t-i+1}-C\log(t+2),
\]
including the times $t<m$. Taking $C$ sufficiently large gives both
$R_{j,t}\ge-\ell_t$ and $W_t\ge-\ell_t$.
\end{proof}

\Needspace{6\baselineskip}
\section{Polynomial identity and coefficient positivity}\label{app:growth}
\begin{proof}[Completion of the proof of Lemma~\ref{lem:necessroot}]
For the pair $j,r$ selected in Section~\ref{sec:necessity}, define
\[
 A_j(z)=\sum_{i=1}^j b_i z^{i-1}+z^{j-1}
                          +\sum_{i=j+1}^m b_i z^i,\qquad
 V(z)=\sum_{i=1}^m b_i z^{i-1}.
\]
The root equation gives $A_j(1/r)=r^2/(r-1)$. Thus
\begin{equation}\label{eq:necessquotients}
 q_j(z)=\frac{1-z(1-z)A_j(z)}{1-rz},\qquad
 \mathcal H(z)=V(z)+\frac{(r-1)A_j(z)-r^2}{1-rz}
\end{equation}
are polynomials, with $\deg\mathcal H\le m-1$.
Write $\mathcal H(z)=\sum_{i=0}^{m-1}h_i z^i$ and set $h_m=0$.
We compare coefficients in
$(1-rz)\mathcal H=(1-rz)V+(r-1)A_j-r^2$ from both ends.
Starting at the highest degree gives
\[
 \begin{aligned}
 h_{k-1}&=\frac{h_k+b_k-b_{k+1}}r>0,\qquad k=m,m-1,\ldots,j+1,\\
 h_{j-1}&=\frac{h_j}r+b_j-\frac{b_{j+1}}r>0.
 \end{aligned}
\]
The last inequality follows from $r>1$ and~\eqref{eq:necessactivity}.
If $j\ge2$, the constant coefficient gives $h_0=r(b_1-r)>0$.
Since $b_i>r$ for $i<j$, comparison of the remaining lower coefficients
gives, by induction,
\[
 h_i=rh_{i-1}-r(b_i-b_{i+1})
       \ge r(b_{i+1}-r)>0,\qquad 1\le i\le j-2.
\]
Thus every coefficient $h_0,\ldots,h_{m-1}$ is positive.

Using~\eqref{eq:necessquotients} and factoring a difference of squares gives
\begin{equation}\label{eq:necessidentity}
 \begin{aligned}
 (r-1)^2q_j+(1-z)^2(\mathcal H-V)
 &=(r-1)(1-z)A_j+\frac{(r-1)^2-r^2(1-z)^2}{1-rz}\\
 &=(r-1)(1-z)A_j+1-2r+rz.
 \end{aligned}
\end{equation}

Let $\mathsf L$ denote the backward shift, so that
$(\mathsf L S)_t=S_{t-1}$. By~\eqref{eq:necessresiduals} and~\eqref{eq:necessquotients},
$R_{j,t}=(1-r\mathsf L)q_j(\mathsf L)S_{t+1}$.
Since $q_j(0)=1$, the sequence
$q_j(\mathsf L)S_t$ has initial value $S_0$ and satisfies
\eqref{eq:necessrootrec}. It therefore equals $F_t$ for $t\ge0$.
Also $W_t=(1-\mathsf L)S_{t+1}-V(\mathsf L)(1-\mathsf L)^2S_t$.
Applying~\eqref{eq:necessidentity} to $S_t$ and substituting these
expressions gives the recurrence for $D_{t+1}$ in
\eqref{eq:necessgrowthidentity}.
\end{proof}

\Needspace{6\baselineskip}
\section{Proof of Remark~\ref{rem:realexponents}}\label{app:abspowers}
\begin{proof}
Set $f_i(y)=|y|^{b_i}$. For sufficiency, $f_i(y)=y^{b_i}$ at the positive
interpolation nodes in~\eqref{eq:bands} and on $J=[0,1]$
in~\eqref{eq:localgram}. Thus the interpolation estimates and the
matrix $\Gamma_J$ are unchanged.
The initial and final calibration bounds use only $|f_i(y)|=|y|^{b_i}$,
while the remaining estimates in Sections~\ref{sec:supervisor}--\ref{sec:local}
use the regression identity~\eqref{eq:ratioerror} and growth
bound~\eqref{eq:upper}. These properties are preserved, so the same
feedback proves sufficiency.

For necessity, fix signs in $y_{t-i}=s_i z_i$ with $z_i>0$.
Then $f_j(s_i z_i)=z_i^{b_j}$, so each determinant term
in~\eqref{eq:necessminor} keeps its magnitude and exponents, while
its sign may change. The derivative recurrence and remainder bounds in
Lemma~\ref{lem:necessgeometry} hold for all signs, giving
\eqref{eq:necessnondegenerate}.
Since $z^b=\exp(b\log z)$ and $\mathscr M>0$,
$\mathscr D/\mathscr M$ is smooth and uses only arithmetic, $\exp$,
and $\log$, as required by Lemma~\ref{lem:centers}.
The conditional Gaussian estimates hold for any Borel regressor,
so the event construction in Lemma~\ref{lem:necessgood} also applies.
For $k=1$, use $|\mathscr D_{t,1}|=Y_t^{b_1+1}$.
The Gram determinant bounds use only magnitudes and exponent ordering.
Thus Lemma~\ref{lem:necesssample} gives the output bounds, and
Section~\ref{sec:necessity} proves necessity.
Finally, $f_i(\sigma y)=\sigma^{b_i}f_i(y)$ preserves the noise
normalization for every $\sigma>0$.
\end{proof}
\Needspace{6\baselineskip}

\end{document}